\documentclass[11pt]{article}

\usepackage{tikz}
\usetikzlibrary{shapes}
\usetikzlibrary{decorations.pathreplacing}
\usepackage{enumerate}
\usepackage[shortlabels]{enumitem}
\usepackage{verbatim}
\usepackage[margin=1in]{geometry}
\usepackage{amssymb}
\usepackage{caption}
\usepackage[normalem]{ulem}
\usepackage{bbm}
\usepackage{amsthm}
\usepackage{enumitem}
\usepackage{scalerel}    
\usepackage{stmaryrd}   

\usetikzlibrary{arrows.meta}

\usepackage[nottoc,notlot,notlof]{tocbibind}


\DeclareFontFamily{U}{mathx}{\hyphenchar\font45}
\DeclareFontShape{U}{mathx}{m}{n}{
      <5> <6> <7> <8> <9> <10>
      <10.95> <12> <14.4> <17.28> <20.74> <24.88>
      mathx10
      }{}
\DeclareSymbolFont{mathx}{U}{mathx}{m}{n}
\DeclareFontSubstitution{U}{mathx}{m}{n}
\DeclareMathAccent{\widecheck}{0}{mathx}{"71}
\DeclareMathAccent{\wideparen}{0}{mathx}{"75}

\usepackage{rotating,centernot,cancel}    

\newcommand{\myfootnote}[1]{
    \renewcommand{\thefootnote}{}
    \footnotetext{\scriptsize#1}
    \renewcommand{\thefootnote}{\arabic{footnote}}
}

\usepackage{amsmath}
\usepackage{tocloft}
\usepackage{float}

\usepackage{tcolorbox}   
\PassOptionsToPackage{hyphens}{url}
\usepackage[hidelinks]{hyperref}
\usepackage{babel}
\theoremstyle{plain}

\input amssym.def
\input amssym.tex

\def\beqn{\begin{eqnarray}}
\def\eeqn{\end{eqnarray}}

\newcommand{\olsi}[1]{\underline{{#1}}}

\def\beq{\begin{equation}}
\def\eeq{\end{equation}}
\usepackage{mathtools}
\DeclarePairedDelimiter\floor{\lfloor}{\rfloor}

\newtheorem{theorem}{Theorem}[section]

\newtheorem{lemma}[theorem]{Lemma} 
\newtheorem*{lemma*}{Lemma}
\newtheorem{proposition}[theorem]{Proposition} 

\theoremstyle{remark}

\theoremstyle{definition}

\numberwithin{figure}{section}
\numberwithin{equation}{section}


\def\Z{\mathbb Z}

\def\dlim[#1][#2]{\lim_{#1 \to #2, #1 \neq #2}}

\def\Var{\textup{$\mathbb{V}$ar}}

\def\Cov{\textup{$\mathbb{C}$ov}}


\newcommand{\be}{\begin{equation}}
\newcommand{\ee}{\end{equation}}

\def\wt{\widetilde}

\newcounter{cnstcnt}

\usepackage{xargs}
\usepackage[colorinlistoftodos,prependcaption,textsize=footnotesize]{todonotes}
\newcommandx{\addmath}[2][1=]{\todo[linecolor=red,backgroundcolor=red!25,bordercolor=red,#1]{#2}}
\newcommandx{\fixtext}[2][1=]{\todo[linecolor=blue,backgroundcolor=blue!25,bordercolor=blue,#1]{#2}}
\newcommandx{\note}[2][1=]{\todo[linecolor=yellow,backgroundcolor=yellow!25,bordercolor=yellow,#1]{#2}}


\newcommand\bbullet{{{\scaleobj{0.6}{\bullet}}}} 










\title{Time correlations in the inverse-gamma polymer\\ with flat initial condition}
\author{
 Xiao Shen\thanks{\scriptsize{Department of Mathematics, University of Utah, Utah, USA. \texttt{xiao.shen@utah.edu}}}
  }
\date{}
\setcounter{tocdepth}{2}
\begin{document}

\allowdisplaybreaks

\maketitle

\begin{abstract}

Temporal correlations in the KPZ universality class have gained significant attention, following the conjectures in \cite{Ferr-Spoh-2016}. Building on prior work in the zero temperature setting \cite{timecorrflat}, we address the time correlation problem with flat initial conditions in the positive temperature regime. Our study focuses on the inverse-gamma polymer, where we establish an upper bound for the correlation between two free energies whose endpoints are far apart in time. In contrast to the previous work \cite{timecorrflat}, our work not only extends the result to positive temperatures but also eliminates the reliance on integrable probability inputs related to the Airy process.  This advancement allows us to address local scales, where the short time remains fixed while the large time grows arbitrarily, a scenario beyond the reach of the Airy scaling limit.

\end{abstract}

\myfootnote{Date: \today}
\myfootnote{2020 Mathematics Subject Classification. 60K35, 	60K37}
\myfootnote{Key words and phrases: correlation, coupling, directed polymer, Kardar-Parisi-Zhang, random growth model.}


\section{Introduction}
The study of universality has long been a central theme in probability theory. A classic example is the central limit theorem (CLT), which asserts that under mild moment assumptions, the large-scale behavior of the sum of independent and identically distributed (i.i.d.)\ random variables is universal, converging to the Gaussian distribution after appropriate scaling. In this sense, the random growth characterized by the sum of i.i.d.\ random variables can be considered a member of the \textit{Gaussian universality class}.

In 1986, Kardar, Parisi, and Zhang introduced a distinct universality class now known as the \textit{KPZ universality class} \cite{Kar-Par-Zha-86}. This class includes models with complex spatial correlations and is very rich, encompassing percolation models, directed polymers, interacting particle systems, certain stochastic partial differential equations, and more. Unlike the Gaussian universality class, where the scaling limit converges to the Gaussian distribution, the KPZ class is expected to have a scaling limit that converges to various Tracy-Widom distributions from  Random Matrix Theory.

Proving that a given model belongs to the KPZ universality class has been a formidable challenge over the past 38 years. This process is reminiscent of the progress made in proving the CLT, which evolved from Bernoulli distributions in the 1700s to general distributions in the early 1900s. Only a few exactly solvable models with specific weight distributions have been rigorously verified to belong to the KPZ class, similar to the role of Bernoulli distribution in proving the CLT. Establishing KPZ universality for models with general weight distributions remains a great challenge.

The remarkable structure of exactly solvable models facilitates the use of algebraic tools, enabling the derivation of explicit formulas for one-point and multi-point distributions of growth profiles in these models. This approach, particularly successful as recognized in \cite{Bai-Dei-Joh-99}, is known as \textit{integrable probability}. While integrable probability is a powerful tool, it has a drawback: adapting this approach to general models remains unclear, as the methods rely heavily on specific formulas of weight distributions. With the eventual goal of  extending results beyond exactly solvable cases, researchers have developed several alternative approaches for these exactly solvable models based on broadly applicable probabilistic techniques and geometric arguments.

A notable technique in this area involves using black-box integrable probability inputs combined with probabilistic and geometric arguments. These integrable inputs help extract detailed information about the geodesic geometry, leading to a better understanding of the space-time profile. This approach was significantly advanced by Basu, Sidoravicius, and Sly, who used it to solve the celebrated slow bond conjecture \cite{slowbondproblem}.

Another important method involves coupling two random growth processes: one with a specific initial condition and another with a stationary initial condition, which is often easier to analyze. By controlling the differences between these two systems, one can draw conclusions about the growth process with the specific initial condition of interest. This technique was first introduced in the context of KPZ models by Sepp\"al\"ainen in the 1990s through a series of works on interacting particle systems. It was later employed in the influential work \cite{Cat-Gro-06} to obtain KPZ exponents. More recently, this approach was further refined in \cite{rtail1}, leading to the development of quantitatively optimal bounds.

As alluded to above, there has been significant interest in revisiting old results and removing the reliance on integrable probability techniques when analyzing exactly solvable models \cite{balzs2019nonexistence, cgm_low_up, rtail1, opt_exit, Lan-Sos-22-a-, OC_tail,coalnew, seppcoal}. In line with this, we investigate the time correlation in the inverse-gamma polymer model with flat initial conditions. Our study builds on previous work \cite{timecorrflat}, which explored this time correlation problem in the zero temperature setting, namely,  the exponential last-passage percolation. Here, we eliminate the need for integrable inputs related to the Airy process in the upper bound analysis. Additionally, the extension to positive temperatures requires different and additional estimates, which we will discuss after presenting our main result (Theorem \ref{thm1}) in the next section.

\subsection{Main result}

While the spatial statistics of the height function in exactly solvable models are fairly well understood, temporal correlations remain more challenging to characterize. Following the experimental and numerical studies by physicists \cite{Take-2012, Take-Sano-2012}, the precise conjectures were formulated by Ferrari and Spohn in \cite{Ferr-Spoh-2016}. Since then, a series of works have addressed these temporal correlation problems across different models and initial conditions \cite{timecorriid, timecorrflat, bas-sep-she-24, diff_timecorr, KPZcorr, ferocctimecorr, fer-occ-2022, Ferr-Spoh-2016}.  We will not delve into a detailed discussion of the existing literature here; instead, we refer interested readers to the introduction of \cite{bas-sep-she-24} for a comprehensive overview.

We work with a representative positive temperature model known as the inverse-gamma polymer. To define this model, 
recall that a random variable $X$ has the inverse-gamma distribution with shape parameter $\mu\in(0,\infty)$, denoted as $X\sim \text{Ga}^{-1}(\mu)$, if $X^{-1}$ has the gamma distribution with the shape parameter $\mu$.
To define the inverse-gamma polymer on $\mathbb{Z}^2$, let $\{Y_{\bf z}\}_{\mathbf z\in \mathbb{Z}^2}$ be i.i.d.~inverse-gamma distributed random variables with a fixed shape parameter $\mu \in (0, \infty)$. For two coordinatewise-ordered vertices ${\bf u}$ and ${\bf v}$ of $\mathbb{Z}^2$, let $\mathbb{X}_{{\bf u}, {\bf v}}$ denote the collection of up-right paths with unit steps between them, and the \textit{point-to-point partition function} (which excludes the weight at the end point) is defined by 
$$
Z_{\mathbf u, \mathbf v} = \frac{1}{Y_{\mathbf v}}\sum_{\boldsymbol\gamma \in \mathbb{X}_{\mathbf u, \mathbf v}} \prod_{\mathbf{z}\in \boldsymbol\gamma} Y_{\mathbf z},
$$
We use the convention $Z_{\mathbf u,\mathbf v} =  0$ if $\mathbf u\leq \mathbf v$ fails. 
Moreover, the \textit{point-to-point free energy} is defined as $\log Z_{\mathbf u, \mathbf v}$.

To define the polymer model with the flat initial condition (or the line-to-point model), let us denote the anti-diagonal line through the origin by $\mathcal{L}_{\mathbf 0}$. For any vertex $\mathbf v$ above $\mathcal{L}_{\mathbf 0}$, we define the \textit{line-to-point partition function} as follows
$$Z_{\mathcal{L}_{\mathbf 0},\mathbf{v}} = \sum_{k \in \mathbb{Z}} Z_{(k, -k), \mathbf v}.$$
And similarly, the \textit{line-to-point free energy} will  be $\log Z_{\mathcal{L}_{\mathbf 0},\mathbf{v}}$. 
With these definitions, we are ready to state the result of our paper.
\begin{theorem}\label{thm1}
There exist positive constants $C_1, c_0, n_0$ such that, whenever $n\geq n_0$ and $c_0 \le r \leq n/2$, it holds that
$$ \textup{$\mathbb{C}$ov}\Big(\log Z_{\mathcal{L}_{\mathbf 0}, (r,r)}, \log Z_{\mathcal{L}_{\mathbf 0}, (n,n)}\Big) \leq C_1\frac{r^{4/3}}{n^{2/3}} \log^{100}(n/r).$$ 
\end{theorem}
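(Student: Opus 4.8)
The plan is to bound the covariance by a variance-of-increments estimate combined with the known fluctuation exponents for the inverse-gamma polymer. Write $F_r = \log Z_{\mathcal L_{\mathbf 0},(r,r)}$ and $F_n = \log Z_{\mathcal L_{\mathbf 0},(n,n)}$. By Cauchy--Schwarz, $\mathrm{Cov}(F_r,F_n)\le \sqrt{\mathrm{Var}(F_r)\,\mathrm{Var}(F_n)}$ is far too weak (it gives $r^{2/3}n^{1/3}$, not $r^{4/3}n^{-2/3}$), so instead I would use the identity $\mathrm{Cov}(F_r,F_n)=\tfrac12\big(\mathrm{Var}(F_r)+\mathrm{Var}(F_n)-\mathrm{Var}(F_n-F_r)\big)$ reorganized as $\mathrm{Cov}(F_r,F_n)=\mathrm{Var}(F_r)-\mathrm{Cov}(F_r,F_r-F_n)$, and then apply Cauchy--Schwarz only to the cross term: $\mathrm{Cov}(F_r,F_n)\le \mathrm{Var}(F_r)^{1/2}\,\mathrm{Var}(F_n-F_r)^{1/2}$ is still not enough. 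The right move is the \emph{standard} one for time correlations: couple the line-to-point free energy to $(n,n)$ through the intermediate anti-diagonal at level $2r$ (or through the point-to-point increment $F_n - \log Z_{(r,r),(n,n)}$), so that $F_n$ decomposes as a function measurable on the small scale plus a contribution that is \emph{almost independent} of everything below level $2r$. Then $\mathrm{Cov}(F_r,F_n)$ reduces to the covariance of $F_r$ with the small-scale piece, which has variance of order $r^{2/3}$, and with the large-scale piece, whose correlation with $F_r$ is controlled by how much the geodesic (polymer) from $\mathcal L_{\mathbf 0}$ to $(n,n)$ wanders near the small scale.

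Concretely, the key quantitative input is the \textbf{coalescence / geodesic-overlap} mechanism: the main contribution to $\mathrm{Cov}(F_r,F_n)$ comes from the event that the polymer measure to $(n,n)$ spends an atypically large portion of its weight passing through the ball of radius $\sim r$ around $(r,r)$, i.e.\ that the polymer from $\mathcal L_{\mathbf 0}$ to $(n,n)$ exits a spatial window of width $\sim r^{2/3}$ only after traversing a large fraction of the first $r$ time-steps. By transversal-fluctuation bounds (exit-point estimates of the type developed in \cite{rtail1, opt_exit} and used in \cite{timecorrflat}), this happens with probability that decays like a stretched exponential in the ratio $n/r$ of time scales, and on the complementary event the two free energies decouple. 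I would therefore (i) set up the stationary-polymer coupling to get both upper-tail control on $F_r$ and $\mathrm{Var}(F_r)\lesssim r^{2/3}$; (ii) use the comparison of $F_n$ with $\log Z_{\mathcal L_{2r},(n,n)}$ (the line-to-point free energy started from the shifted anti-diagonal) plus a Markov-type decomposition, so that $\mathrm{Cov}(F_r,F_n) = \mathrm{Cov}\big(F_r,\; \log Z_{\mathcal L_{\mathbf 0},(n,n)} - \log Z_{\mathcal L_{2r},(n,n)}\big) + \mathrm{Cov}\big(F_r, \log Z_{\mathcal L_{2r},(n,n)}\big)$, the first term handled by Cauchy--Schwarz against a \emph{difference} whose $L^2$ norm is small, the second term controlled because $\log Z_{\mathcal L_{2r},(n,n)}$ is largely insensitive to the environment below $2r$.

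The technical heart — and the step I expect to be the main obstacle — is replacing the Airy-process input of \cite{timecorrflat} by a purely probabilistic argument for the bound $\mathrm{Cov}\big(F_r, \log Z_{\mathcal L_{2r},(n,n)}\big)\lesssim r^{4/3}n^{-2/3}\log^{100}(n/r)$. In the zero-temperature flat case this is where the Airy$_1$ / Airy$_2$ coupling was used; here one must instead show directly that the dependence of the large-scale line-to-point free energy on the small-scale environment is mediated entirely by the polymer endpoint distribution on $\mathcal L_{2r}$, which concentrates on a window of width $O(r^{2/3})$, and then run a \emph{chaining over dyadic scales} $r, 2r, 4r,\dots, n$: across each dyadic doubling the correlation drops by a constant factor coming from the probability that the polymer re-enters the previous scale's window, and summing the geometric-type series produces the $r^{4/3}/n^{2/3}$ rate with the polylog correction $\log^{100}(n/r)$ absorbing the union bounds and the moderate-deviation corrections at each of the $\sim\log(n/r)$ scales. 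Making the positive-temperature versions of the requisite exit-point and free-energy-difference moment bounds (in particular controlling the partition function away from its maximizing path, which has no zero-temperature analogue) is the additional work that the positive-temperature setting demands, and I would handle it with the inverse-gamma stationary polymer and the associated Busemann/Burke-type identities established in the works cited in the introduction.
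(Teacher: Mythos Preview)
Your decomposition has a structural problem that makes the rest of the outline unworkable. In the identity
\[
\mathrm{Cov}(F_r,F_n)=\mathrm{Cov}\big(F_r,\;F_n-\log Z_{\mathcal L_{2r},n}\big)+\mathrm{Cov}\big(F_r,\;\log Z_{\mathcal L_{2r},n}\big),
\]
the second term is not merely ``small'' or ``largely insensitive'' --- it is \emph{exactly zero}, since $F_r$ depends only on weights strictly below $\mathcal L_r$ while $\log Z_{\mathcal L_{2r},n}$ depends only on weights at or above $\mathcal L_{2r}$. So there is nothing to chain dyadically, and the entire covariance sits in the first term. But Cauchy--Schwarz on that term gives $\mathrm{Var}(F_r)^{1/2}\,\mathrm{Var}(F_n-\log Z_{\mathcal L_{2r},n})^{1/2}\asymp r^{1/3}\cdot r^{1/3}=r^{2/3}$, which is off from the target $r^{4/3}/n^{2/3}$ by the factor $(n/r)^{2/3}$ you are trying to prove. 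You have correctly identified that the improvement must come from the rarity of overlap, but your quantification is also off: the event that the polymer to $(n,n)$ passes through a window of width $r^{2/3}$ around $(r,r)$ has probability of order $(r/n)^{2/3}$ (a power law), not stretched-exponential in $n/r$; a stretched-exponential bound would give something far stronger than the theorem claims.

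The paper's route is quite different from your sketch. It writes $\mathrm{Cov}(F_r,F_n)=\mathbb E[\mathrm{Cov}_{\mathcal F_r}(F_r,F_n)]$ where $\mathcal F_r$ is the $\sigma$-algebra of weights above $\mathcal L_r$, and then partitions $\Omega$ according to the location on $\mathcal L_r$ of the argmax $\mathbf u_r^{\max}$ of $\mathbf u\mapsto\log Z_{\mathbf u,n}$ (an $\mathcal F_r$-measurable quantity). On the event that $\mathbf u_r^{\max}$ lies far from the diagonal and is well separated in free-energy value from the central segment, $F_n$ is approximated by the restricted free energy $\log Z_{\mathcal L_0,n}^{\textup{out},R_{0,r}^{\wt j r^{2/3}}}$ that avoids a thin parallelogram around the diagonal, and \emph{that} restricted object is what becomes (conditionally) nearly independent of $F_r$. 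The crucial $(r/n)^{2/3}$ factor then comes from the probability that $\mathbf u_r^{\max}$ lands in any fixed window of width $r^{2/3}$ --- this is a \emph{profile anti-concentration} estimate (equation~(5.25) of \cite{buse_ind}), not a transversal-fluctuation or exit-time bound, and it is exactly the replacement for the Airy-process input used in \cite{timecorrflat}. Nothing resembling a dyadic scale chain appears; the decomposition is over the position of $\mathbf u_r^{\max}$, not over intermediate time levels.
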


The upper bound is expected to be of order $r^{4/3}/n^{2/3}$, with the logarithmic expression arising as an error term from our analysis. Compared to the zero temperature case in \cite{timecorrflat}, which employs geometric arguments with integrable probability inputs, our proof avoids relying on such inputs related to the Airy process. This refinement enables us to extend the upper bound to local scales, broadening the range from $\delta n \leq r \leq n$ in \cite{timecorrflat} to $c_0 \leq r \leq n$. Furthermore, we have slightly reduced the error term in the upper bound, improving it from $\exp(C\log^{5/6}(n/r))$ in \cite{timecorrflat} to $\log^{100}(n/r)$.

We also note that transitioning from zero temperature to positive temperature results in several of our estimates that differ from those in \cite{timecorrflat}. Specifically, in the zero temperature setting, one estimate used throughout the upper bound proof in \cite{timecorrflat} is that the difference between the last-passage value and the last-passage value restricted to a wide parallelogram is zero with high probability. This estimate is particularly useful when the difference term appears within a product of several other terms, quickly leading to an upper bound of zero for the product. In the positive temperature setting, however, this approach no longer applies, necessitating detailed tail estimates for all the terms and the development of new arguments.

Lastly, regarding the lower bound of the covariance in Theorem \ref{thm1}, an optimal lower bound of order \( r^{4/3}/n^{2/3} \) was established in \cite{timecorrflat} for the exponential last-passage percolation. With the recent convergence of the inverse-gamma free energy profile to the Airy process \cite{ag_airy}, we anticipate that a similar result could be obtained for the inverse-gamma polymer. However, replacing the dependence on the Airy process and extending these results to cover the local scale will be part of our future work. Additionally, for the case when both $n$ and $r$ are large, i.e., $n/2 \leq r \leq n-c_0$, both upper and lower bounds have been established in Theorem 1.1 of \cite{diff_timecorr}.

\vspace{3mm}
\noindent\textbf{Acknowledgements.} 
The author sincerely thanks Riddhipratim Basu for insightful discussions regarding his work in \cite{timecorrflat}. The author acknowledges partial support from the Wylie Research Fund at the University of Utah.

\section{Preliminaries}

\subsection{Notation}\label{not}
Fix ${\bf u}\in \mathbb{Z}^2$, let us denote the anti-diagonal line through $\mathbf{u}$ as
$\mathcal{L}_{{\bf u}}=\{{\bf u}+(j,-j): j\in\Z\}$.
For $k\in \mathbb{R}_{\geq 0}$, define
$\mathcal{L}^k_{{\bf u}}$ to be the line segment $\{{\bf x}\in \mathcal{L}_{{\bf u}} : |{\bf x}-{\bf u}|_\infty \leq k  \}.$
For two coordinatewise-ordered  points ${\bf u} \leq {\bf v}$ and $k\in \mathbb{R}_{\geq 0}$,  
$R_{{\bf u}, {\bf v}}^k$ denotes the parallelogram spanned by the four corners ${\bf u} \pm (k,-k)$ and ${\bf v} \pm (k,-k)$.

Integer points on the diagonal or the anti-diagonal are abbreviated as $a=(a,a)$ and $\olsi{a} = (a,-a)$ when they occur as subscripts. Common occurrences of this include  $\mathcal{L}_{(r,r)} = \mathcal{L}_r$, $Z_{\mathcal{L}_{(r,r)}, (n,n)} = Z_{\mathcal{L}_r, n}$,  $Z_{{\bf p}, (N+k,N-k)} = Z_{{\bf p}, N+\olsi{k}}$,  and $R_{(a,a), (b,b)}^k = R_{a, b}^k$.

For the polymer partition functions, in our paper, we will use the following:
\begin{align*}
Z_{A, B}^{\textup{in}, R^{h}_{{\bf c}, {\bf d}}} &= \textup{ the partition function with paths from $A$ to $B$ contained inside $R^{h}_{{\bf c}, {\bf d}}$}\\ 
Z_{A, B}^{\textup{out}, R^{h}_{{\bf c}, {\bf d}}} &= \textup{ the partition function with paths from $A$ to $B$ contained outside $R^{h}_{{\bf c}, {\bf d}}$}\\ 
Z_{A, B}^{\textup{exit}, R^{h}_{{\bf c}, {\bf d}}} &= \textup{ the partition function with paths from $A$ to $B$  that exit diagonal sides of  $R^{h}_{{\bf c}, {\bf d}}$}\\
Z_{A, B}^{\textup{touch}, R^{h}_{{\bf c}, {\bf d}}} &= \textup{ the partition function with paths from $A$ to $B$  that intersect $R^{h}_{{\bf c}, {\bf d}}$}
\end{align*}
To simplify notation, when the subscripts of the partition function match those of the parallelogram appearing in the superscript, we use the abbreviation \( Z_{\mathcal{L}_{{\bf a}}^{s_1}, \mathcal{L}_{{\bf b}}^{s_2}}^{\star, k} \) for \( Z_{\mathcal{L}_{{\bf a}}^{s_1}, \mathcal{L}_{{\bf b}}^{s_2}}^{\star, R^{k}_{{\bf a}, {\bf b}}} \), where \(\star\) represents ``in," ``out," ``exit," or ``touch."

We adopt two conventions for clarity regarding constants and integer rounding. First, generic positive constants will be denoted by \(C, C'\), etc., throughout the calculations and proofs, with the understanding that these constants may vary from line to line without a change in notation. Second, we simplify expressions by omitting the integer floor function. For instance, if the line segment from \((0,0)\) to \((N, N)\) is divided into 5 equal parts, we denote the free energy of the first segment by \(\log Z_{0, N/5}\), even if \(N/5\) is not an integer.

\subsection{Estimates for the polymer model}\label{est_poly_bulk}

In this section, we will restate several estimates from \cite{bas-sep-she-24, diff_timecorr}. 
Recall the \textit{shape function} which represents the law of large numbers limit of the point-to-point free energy. It is a deterministic continuous function \(\Lambda: \mathbb{R}^2_{\geq 0} \rightarrow \mathbb{R}\) that satisfies

\begin{equation}\label{shape}
\lim_{n\rightarrow \infty} \sup_{ |{\bf z}|_1 \geq n} \frac{|\log {Z}_{0, {\bf z}} - \Lambda({\bf z})|}{|{\bf z}|_1} = 0 \qquad \textup{$\mathbb{P}$-almost surely}.
\end{equation}
For simplicity, let us abbreviate $\Lambda\big((n,n)\big)$ as $\Lambda_n$.

Our first two propositions give the upper bounds for the right and left tails of the free energy.
\begin{proposition}\label{ptl_upper}
There exist positive constants $C_1, C_2, n_0$ such that for each $n \geq n_0$, $t\geq 1$ and $1\leq h \leq e^{C_1{\min\{t^{3/2}, tn^{1/3}\}}}$,
$$\mathbb{P}\Big(\log{Z}_{\mathcal{L}_0^{hn^{2/3}},\mathcal{L}_{n}}  - \Lambda_n \geq tn^{1/3}\Big) \leq 
e^{-C_2\min\{t^{3/2}, tn^{1/3}\}}.  
$$
\end{proposition}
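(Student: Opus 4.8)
The estimate is a standard KPZ right-tail bound, and the plan is to reduce it, by a single translation-invariance plus union-bound step, to the basic fixed-transversal-scale right-tail inputs for the inverse-gamma polymer that are restated from \cite{bas-sep-she-24, diff_timecorr}. The one substantive point is to remove the width parameter $h$. Writing $Z_{\mathcal{L}_0^{hn^{2/3}},\mathcal{L}_n}=\sum_{|k|\le hn^{2/3}}Z_{\olsi{k},\mathcal{L}_n}$, I would exploit that the target line $\mathcal{L}_n$ is invariant under every anti-diagonal translation $\mathbf x\mapsto\mathbf x+\olsi{k}$; since the weights are i.i.d., this makes $Z_{I,\mathcal{L}_n}$ stochastically dominated by $Z_{\mathcal{L}_0^{n^{2/3}},\mathcal{L}_n}$ for every sub-segment $I\subset\mathcal{L}_0$ of transversal half-width at most $n^{2/3}$. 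Partitioning $\mathcal{L}_0^{hn^{2/3}}$ into $\lceil 2h\rceil$ consecutive such sub-segments and applying a union bound,
\[
\mathbb{P}\Big(\log Z_{\mathcal{L}_0^{hn^{2/3}},\mathcal{L}_n}-\Lambda_n\ge tn^{1/3}\Big)\ \le\ \lceil 2h\rceil\ \mathbb{P}\Big(\log Z_{\mathcal{L}_0^{n^{2/3}},\mathcal{L}_n}-\Lambda_n\ge sn^{1/3}\Big),\qquad sn^{1/3}:=tn^{1/3}-\log\lceil 2h\rceil.
\]
Since $\log h\le C_1\min\{t^{3/2},tn^{1/3}\}$ and $n^{-1/3}\min\{t^{3/2},tn^{1/3}\}\le t$, taking $C_1<1/2$ and $n\ge n_0$ gives $s\ge(1-C_1)t-o(1)\ge t/2$, so everything reduces to the case $h=1$ at the comparable level $s$.

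For the $h=1$ case (and, by monotonicity in the width, for $h$ up to any fixed threshold $h_0$, directly at the still-characteristic scale $h_0 n^{2/3}$) I would invoke the restated right-tail estimate for the characteristic-scale segment-to-line free energy, of the form $\mathbb{P}(\log Z_{\mathcal{L}_0^{an^{2/3}},\mathcal{L}_n}-\Lambda_n\ge sn^{1/3})\le e^{-c\min\{s^{3/2},sn^{1/3}\}}$; if only a segment-to-segment version is available, one first peels the target line $\mathcal{L}_n$ dyadically in the transversal variable, using the quadratic decay $\Lambda_n-\Lambda(n+\olsi{j})\gtrsim j^2/n$ of the shape function (together with a crude linear-scale large-deviation bound for the finitely many near-axis scales) to discard the far-off-diagonal endpoints. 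It remains to absorb the prefactor $\lceil 2h\rceil\le 4e^{C_1\min\{t^{3/2},tn^{1/3}\}}$. Using $s\ge t/2$ and $\min\{(t/2)^{3/2},(t/2)n^{1/3}\}\ge 2^{-3/2}\min\{t^{3/2},tn^{1/3}\}$, the base bound has rate at least $2^{-3/2}c\min\{t^{3/2},tn^{1/3}\}$, so choosing $C_1$ a small fraction of $c$ leaves a net exponent $\le -(\text{const}\cdot c)\min\{t^{3/2},tn^{1/3}\}+\log 4$; since the split is only performed when $h>h_0$, at which point the constraint forces $\min\{t^{3/2},tn^{1/3}\}\ge (\log h_0)/C_1$, picking $h_0$ large (relative to $1/C_1$) swallows the residual $\log 4$, and one concludes with $C_2$ equal to a small fraction of $c$.

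The genuinely nontrivial ingredient — which I would import rather than reprove — is the \emph{prefactor-free} right-tail bound for the characteristic-scale segment-to-line free energy. A naive union bound over the $\Theta(n^{4/3})$ point-to-point contributions inside such a parallelogram loses a polynomial-in-$n$ factor that the target rate $e^{-ct^{3/2}}$ cannot afford once $t$ is of constant order, and a first-moment bound is also unavailable because the inverse-gamma weights are heavy-tailed (indeed $\mathbb{E}[Z]=\infty$ when $\mu\le 1$); such an estimate instead comes from the stationary-polymer / Busemann comparison machinery of \cite{bas-sep-she-24, diff_timecorr}, and in fact Proposition~\ref{ptl_upper} is most likely a restatement of (or an immediate corollary of) such a bound, with the width reduction above supplying the dependence on $h$. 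Granting that input, the only remaining work is the routine bookkeeping indicated above: choosing $C_1$ small, fixing the split threshold $h_0$ accordingly, handling the level shift $t\mapsto s$, and taking $C_2$ a small fraction of the base rate.
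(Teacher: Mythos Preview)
Your reading is correct: the paper does not prove Proposition~\ref{ptl_upper} at all. It is listed in Section~\ref{est_poly_bulk} under the sentence ``In this section, we will restate several estimates from \cite{bas-sep-she-24, diff_timecorr},'' and no argument is given. So your instinct that this is an imported black-box input is exactly what the paper does.

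Your proposal actually goes further than the paper by sketching a reduction from general $h$ to the characteristic-scale case $h=1$ via translation invariance along the anti-diagonal and a union bound over sub-segments. That reduction is sound (the i.i.d.\ weights and the full anti-diagonal target line make each $Z_{I,\mathcal{L}_n}$ identically distributed for sub-segments $I$ of the same length), and the bookkeeping with $C_1$ small enough to absorb the $\lceil 2h\rceil$ prefactor is correct. This is a genuine, if routine, addition: the paper simply cites the result with the $h$-dependence already built in, whereas you explain how the $h$-dependence would follow from a fixed-scale version. Either way, the substantive content---the prefactor-free $e^{-c\min\{t^{3/2},tn^{1/3}\}}$ right tail at the characteristic scale---is imported from the stationary-polymer machinery in the cited references, exactly as you say.
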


\begin{proposition}\label{low_ub}
There exist positive constants $C_1, n_0 $ such that for each $n\geq n_0$, $t\geq 1$ , we have 
$$\mathbb{P}(\log Z_{0,  n} - \Lambda_n \leq -tn^{1/3}) \leq e^{-C_1 \min\{t^{3/2}, tn^{1/3}\}}.$$
\end{proposition}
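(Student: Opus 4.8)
The plan is to prove the bound by induction on $n$, combining an optimal subdivision of the diagonal square with a Chernoff estimate for a sum of independent block free energies, and treating the large-$t$ part of the range separately by a soft convexity argument. The outside inputs needed are classical: the existence and homogeneity of the shape function (so that $\Lambda_{N}=N\Lambda_{1}$, and $m$ copies of the size-$N/m$ problem add up to the size-$N$ one) and the second-order bounds $\mathbb{E}\log Z_{0,N}=\Lambda_{N}+O(N^{1/3})$ and $\operatorname{Var}(\log Z_{0,N})=O(N^{2/3})$, which may be quoted from the stationary-polymer estimates of \cite{rtail1} or from \cite{bas-sep-she-24,diff_timecorr}.

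For the principal range $1\le t\le c_{1}n^{2/3}$, fix a small constant $\epsilon>0$, put $m=\lceil\epsilon t^{3/2}\rceil$, and suppose $m\ge 2$. Decomposing a path through the corner points $(in/m,in/m)$, $i=1,\dots,m-1$, and using the endpoint-exclusion convention in the definition of $Z$ (which makes the relevant blocks of weights disjoint), one gets $\log Z_{0,n}\ge\sum_{i=1}^{m}W_{i}$ with $W_{1},\dots,W_{m}$ independent and each distributed as $\log Z_{0,n/m}$. By the first-moment bound, $\mathbb{E}W_{i}\ge\Lambda_{n/m}-C(n/m)^{1/3}$, and by the inductive hypothesis the exponential left tail $\mathbb{P}(W_{i}\le\mathbb{E}W_{i}-s)\le e^{-cs/(n/m)^{1/3}}$ holds for $s\ge(n/m)^{1/3}$; consequently, on the one hand, $\sum_{i}\mathbb{E}W_{i}\ge\Lambda_{n}-Cm^{2/3}n^{1/3}=\Lambda_{n}-C\epsilon^{2/3}tn^{1/3}\ge\Lambda_{n}-\tfrac12 tn^{1/3}$ once $\epsilon$ is small, and, on the other hand, each centered variable $\mathbb{E}W_{i}-W_{i}$ satisfies $\log\mathbb{E}e^{\lambda(\mathbb{E}W_{i}-W_{i})}\le C\lambda^{2}(n/m)^{2/3}$ for $0\le\lambda\le c(n/m)^{-1/3}$. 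A Chernoff bound for $\sum_{i}(\mathbb{E}W_{i}-W_{i})\ge\tfrac12 tn^{1/3}$, taken at $\lambda=c(n/m)^{-1/3}$, then produces an exponent of the form $-a\epsilon^{1/3}t^{3/2}+b\epsilon\,t^{3/2}$ (using $m^{1/3}=\Theta(\epsilon^{1/3}t^{1/2})$ and $(n/m)^{1/3}=\Theta(\epsilon^{-1/3}t^{-1/2}n^{1/3})$), which is $\le-c't^{3/2}$ for $\epsilon$ small; hence $\mathbb{P}(\log Z_{0,n}\le\Lambda_{n}-tn^{1/3})\le e^{-c't^{3/2}}$. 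When $m<2$ --- that is, $t$ below a fixed threshold --- the claim reduces to the uniform anti-concentration bound $\mathbb{P}(\log Z_{0,n}\le\Lambda_{n}-tn^{1/3})\le 1-\delta$, which follows from the first-moment and variance bounds via Chebyshev; and the base of the induction (sizes in a bounded range) is immediate, since there the left tail of the fixed random variable $\log Z_{0,n}$ is super-exponentially small in $t$.

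For $t\ge c_{1}n^{2/3}$ the subdivision would drive the block size below the inductive range, so there I would instead use that $\log Z_{0,n}$ is a convex function of the vector of log-weights whose gradient has Euclidean norm at most $\sqrt{2n}$; log-concavity of the log-gamma law (when $\mu\ge1$, with a truncation --- which only decreases $Z_{0,n}$ --- for $\mu<1$) then yields $\mathbb{P}(\log Z_{0,n}\le\mathbb{E}\log Z_{0,n}-s)\le e^{-cs^{2}/n}$, and at $s=tn^{1/3}$ with $t\ge c_{1}n^{2/3}$ this is at most $e^{-ctn^{1/3}}$ once $\mathbb{E}\log Z_{0,n}$ is placed within $o(n)$ of $\Lambda_{n}$; the two ranges overlap near $t\asymp n^{2/3}$, where both arguments give $e^{-cn}$. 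I expect the main obstacle to be the calibration in the Chernoff step --- arranging that the mean-loss from subdivision ($\le\tfrac12 tn^{1/3}$) and the quadratic cumulant term ($\Theta(\epsilon t^{3/2})$) are controlled simultaneously by one small choice of $\epsilon$; this is routine once one grants the cited second-order estimates, and those estimates, proved through the stationary polymer, are where the KPZ exponents $\tfrac13$ and $\tfrac23$ genuinely enter, the argument above serving merely to propagate them to the moderate-deviation left tail.
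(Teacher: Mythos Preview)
The paper does not prove this proposition; it is quoted among the preliminary estimates restated from \cite{bas-sep-she-24, diff_timecorr} (themselves relying on the stationary-polymer technology of \cite{rtail1}), so there is no argument here to compare yours against. Your bootstrapping route---subdivide into $m\sim\epsilon t^{3/2}$ independent diagonal blocks and apply a Chernoff bound---is the standard probabilistic derivation of the $e^{-ct^{3/2}}$ lower tail once the moment inputs are in hand, and is sound in outline.

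Two points need repair. First, the induction on $n$ does not close as written: feeding a block-tail constant $C_1$ into the MGF estimate yields a valid range $0\le\lambda\le c\sigma^{-1}$ with $c\propto C_1$, and the Chernoff step at $\lambda=c\sigma^{-1}$ returns a coefficient of $t^{3/2}$ that is a fixed fraction of $C_1$; iterating drives it to zero. The calibration difficulty you anticipate is within a single step, but the real issue is across steps. The customary fix is not to induct at all: one first proves, by an independent argument (comparison with the stationary polymer, or a finite iteration that upgrades the Chebyshev bound $C/\tau^{2}$ to a stretched exponential), a uniform-in-$N$ weak tail $\mathbb{P}(\log Z_{0,N}\le\Lambda_{N}-\tau N^{1/3})\le e^{-c_{0}\tau}$, and then runs your subdivision-plus-Chernoff argument \emph{once}; the resulting $t^{3/2}$-constant then depends only on $c_{0}$ and the cited mean and variance constants, not on itself. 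Second, for $t\ge c_{1}n^{2/3}$ the inference ``log-concave log-weights plus convex $\sqrt{2n}$-Lipschitz implies $e^{-cs^{2}/n}$'' is false in general: log-concave product measures have sub-exponential, not sub-Gaussian, coordinates, and already $f(x)=-x_{1}$ with $x_{1}$ exponential gives only an $e^{-cs}$ lower tail. A direct substitute is to bound $\log Z_{0,n}\ge\sum_{i=0}^{n-1}\log Z_{(i,i),(i+1,i+1)}$ (independent by the endpoint-exclusion convention) and apply Cram\'er's theorem: since $\mathbb{E}[e^{-\lambda V}]<\infty$ for all $\lambda>0$ (because $\mathbb{E}[Y^{-\lambda}]=\Gamma(\mu+\lambda)/\Gamma(\mu)$), the rate function grows super-linearly and delivers $e^{-ctn^{1/3}}$ once $t$ exceeds an absolute multiple of $n^{2/3}$; the gap between the two regimes is then closed by monotonicity in $t$ and shrinking $C_{1}$.
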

A direct consequence of the two tail estimates above is the following variance bound for the point-to-line free energy. 
\begin{proposition}\label{varbd}
There exist positive constants $C_1, n_0$ such that for each $n\geq n_0$,  we have
$$\Var\Big(\log Z_{\mathcal{L}_0, n}\Big) \leq Cn^{2/3}.$$
\end{proposition}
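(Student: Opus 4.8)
The plan is to derive the variance bound for $\log Z_{\mathcal{L}_0, n}$ from the one-sided tail estimates in Propositions~\ref{ptl_upper} and \ref{low_ub}, using the standard identity $\Var(X) = \int_0^\infty 2s\, \mathbb{P}(|X - \mathbb{E}X| \geq s)\, ds$ (or the analogous integration-by-parts formula for $\mathbb{E}[(X - \mathbb{E}X)^2]$). The key point is that both propositions, after rescaling, give sub-exponential control on the fluctuations of $\log Z_{\mathcal{L}_0, n}$ around the deterministic quantity $\Lambda_n$ at the scale $n^{1/3}$, and sub-exponential concentration at scale $n^{1/3}$ immediately yields a variance of order $n^{2/3}$.

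First I would center the analysis at $\Lambda_n$ rather than at the mean: writing $W = \log Z_{\mathcal{L}_0, n} - \Lambda_n$, I want upper and lower tail bounds of the form $\mathbb{P}(W \geq t n^{1/3}) \leq e^{-c\min\{t^{3/2}, t n^{1/3}\}}$ and $\mathbb{P}(W \leq -t n^{1/3}) \leq e^{-c\min\{t^{3/2}, t n^{1/3}\}}$ for all $t \geq 1$. The upper tail is exactly Proposition~\ref{ptl_upper} in the limiting regime where the parallelogram half-width $h n^{2/3}$ is taken large enough (or one takes $h$ on the order of a constant and notes $Z_{\mathcal{L}_0, n} = \lim Z_{\mathcal{L}_0^{hn^{2/3}}, n}$ as $h \to \infty$, with the monotone limit of the bound still valid; alternatively the line-to-point partition function is dominated, up to the contributions from far-away starting points which are negligible, by the restricted one). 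The lower tail requires a short argument: $Z_{\mathcal{L}_0, n} \geq Z_{0, n}$ since the line-to-point partition function includes the point-to-point term starting at the origin as one of its summands, hence $\log Z_{\mathcal{L}_0, n} \geq \log Z_{0, n}$, and so $\mathbb{P}(W \leq -t n^{1/3}) \leq \mathbb{P}(\log Z_{0, n} - \Lambda_n \leq -t n^{1/3})$, which is bounded by Proposition~\ref{low_ub}.

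Next I would record that these tail bounds control all moments of $W/n^{1/3}$ uniformly in $n$: for any fixed $p \geq 1$, $\mathbb{E}|W/n^{1/3}|^p = \int_0^\infty p s^{p-1} \mathbb{P}(|W| \geq s n^{1/3})\, ds$ is bounded by a constant depending only on $p$ and $\mu$, since the integrand decays at least like $e^{-cs}$ for $s \geq 1$. In particular $\mathbb{E}[W^2] \leq C n^{2/3}$ and $|\mathbb{E}[W]| \leq (\mathbb{E}[W^2])^{1/2} \leq C' n^{1/3}$. Since $\log Z_{\mathcal{L}_0, n} = W + \Lambda_n$ and $\Lambda_n$ is deterministic, $\Var(\log Z_{\mathcal{L}_0, n}) = \Var(W) \leq \mathbb{E}[W^2] \leq C n^{2/3}$, which is the claim. (One should take $n \geq n_0$ with $n_0$ the maximum of the thresholds in the two input propositions.)

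I do not expect any genuine obstacle here; the only point requiring a little care is the passage from the parallelogram-restricted partition function $Z_{\mathcal{L}_0^{hn^{2/3}}, n}$ in Proposition~\ref{ptl_upper} to the full line-to-point partition function $Z_{\mathcal{L}_0, n}$. If a clean statement of the upper tail for $Z_{\mathcal{L}_0, n}$ itself is not already available, I would handle the starting points outside $\mathcal{L}_0^{hn^{2/3}}$ by a union bound over dyadic scales, using the shape function's curvature to see that paths starting at distance $\gg n^{2/3}$ from the diagonal pay a deterministic penalty growing quadratically in the transversal displacement, combined with a crude right-tail bound for point-to-point free energies; summing over the scales and choosing $h$ a suitable constant absorbs this contribution. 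This is the same routine localization argument used elsewhere in the literature and does not affect the order of the bound.
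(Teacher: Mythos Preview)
Your proposal is correct and matches the paper's approach: the paper does not give a separate proof but simply states that the variance bound is ``a direct consequence of the two tail estimates above,'' meaning exactly the integration argument you outline. Your handling of the lower tail via $Z_{\mathcal{L}_0,n}\ge Z_{0,n}$ and your remark about extending Proposition~\ref{ptl_upper} from the segment $\mathcal{L}_0^{hn^{2/3}}$ to the full line (which the paper leaves implicit) are the right way to fill in the details.
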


The next two propositions summarize the loss of free energy for paths with too much transversal fluctuation. While the original estimates do not include the $s$ parameter below, the same proof can be adapted, and we omit the details. The constant $c^*$ will appear later in our proofs, so we introduce this special notation.

\begin{proposition}\label{trans_fluc_loss}
There exist positive constants $c^*, C_1, n_0$ such that for each $n\geq n_0$, $h \geq 0$, $t\geq 1$ and $s\geq 0$ we have
$$\mathbb{P}\Big(\log Z_{\mathcal{L}^{hn^{2/3}}_0, \mathcal{L}_{n}\setminus\mathcal{L}_{n}^{(h+t)n^{2/3}}}    - \Lambda_n \geq (-c^* t^2 + s)n^{1/3} \Big) \leq e^{-C_1(t+s)}.$$
\end{proposition}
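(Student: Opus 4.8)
The plan is to re-run the proof of the $s=0$ version of this estimate from \cite{bas-sep-she-24, diff_timecorr}, carrying one extra right-tail slack parameter through the argument. A useful first remark is that the $s=0$ argument already delivers more than the stated $e^{-C_1t}$: a path with transversal fluctuation at least $tn^{2/3}$ loses typically $\asymp t^2n^{1/3}$ in free energy, with $n^{1/3}$-scale fluctuations on top of that, so the natural output has the form $e^{-C\min\{(t^2)^{3/2},\,t^2n^{1/3}\}}$ and $e^{-C_1t}$ is merely a convenient weakening. Adding $s$ amounts to replacing $t^2$ by $t^2+s$ inside the minimum, and since $t^2+s\ge1$ when $t\ge1$ we get $\min\{(t^2+s)^{3/2},(t^2+s)n^{1/3}\}\ge c(t^2+s)\ge c(t+s)$, which is the claimed bound. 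Two preliminary reductions: we may assume $1\le t<n^{1/3}$, since otherwise $\mathcal{L}_n\setminus\mathcal{L}_n^{(h+t)n^{2/3}}$ contains no endpoint coordinatewise reachable from within $hn^{2/3}$ of the origin, so the partition function is identically $0$ and the event has probability $0$; and, reflecting the lattice across the main diagonal $\{x=y\}$ (which preserves the law of $\{Y_{\mathbf z}\}$ and all the relevant line segments), it suffices, up to a factor $2$ and an additive $\log 2$, to control the contribution of the endpoints $(n+m,n-m)$ with $m\ge(h+t)n^{2/3}$.

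For the internal structure I would follow the reference argument. Decompose these endpoints dyadically: with $J$ chosen so that $2^Jtn^{2/3}\asymp n$ (hence $J\le C\log n$, using $t<n^{1/3}$), let $W_j$, $0\le j\le J$, collect the endpoints with $m\in[(h+2^jt)n^{2/3},(h+2^{j+1}t)n^{2/3})$, the top block truncated at the maximal feasible $m$; then the log of the partition function restricted to $\{m\ge(h+t)n^{2/3}\}$ is at most $\log(2(J+1))+\max_{0\le j\le J}\log Z_{\mathcal{L}_0^{hn^{2/3}},W_j}$, and for $n\ge n_0$ the prefactor satisfies $\log(2(J+1))\le\tfrac{c^*}{2}t^2n^{1/3}$ (here $t\ge1$), so it is absorbed into the constant multiplying $t^2$. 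Every path contributing to $Z_{\mathcal{L}_0^{hn^{2/3}},W_j}$ has transversal displacement $k$ with $2^jtn^{2/3}\le k\le n$, so the strong concavity of the shape function from \eqref{shape} in the anti-diagonal direction gives $\Lambda(n+k,n-k)\le\Lambda_n-c_1k^2/n\le\Lambda_n-c_1(2^jt)^2n^{1/3}$ (a cruder bound of the same strength handling the truncated top block); this is the curvature baseline. A segment-level right-tail estimate proved exactly as Proposition~\ref{ptl_upper} — crucially \emph{not} a union bound over the $\asymp n$ individual endpoints of $W_j$, which would lose a $\log n$ factor in the exponent — then yields, for $v\ge0$,
$$\mathbb{P}\Big(\log Z_{\mathcal{L}_0^{hn^{2/3}},W_j}-\Lambda_n\ \ge\ -c_1(2^jt)^2n^{1/3}+v\,n^{1/3}\Big)\ \le\ e^{-C\min\{v^{3/2},\,vn^{1/3}\}}.$$
Taking $c^*:=c_1/4$ and applying this with $v=c_1(2^jt)^2-\tfrac32c^*t^2+s\ge\tfrac{c_1}{2}4^jt^2+s$ bounds the scale-$j$ probability by $e^{-C'(4^jt^2+s)}$ (using $4^jt^2+s\ge1$), and summing over $j\ge0$ and over the two reflected sides gives $\le Ce^{-C'(t^2+s)}\le e^{-C_1(t+s)}$ after adjusting constants (the bounded range of $t+s$ being harmless since the event has probability bounded away from $1$ there).

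The main obstacle is the displayed segment-level right-tail estimate, with the endpoint union already absorbed, uniformly over all $h\ge0$ and all dyadic scales. The subtle cases are very large $h$, where the relevant picture sits far from the main diagonal and Proposition~\ref{ptl_upper} does not literally apply, and the top scales $j\approx J$, where $k\asymp n$ and one must track the transition between the $v^{3/2}$ and $vn^{1/3}$ regimes of the exponent against a curvature loss that is already of order $n$. This is precisely the ingredient that requires revisiting the proof in \cite{bas-sep-she-24, diff_timecorr} rather than quoting it, which is why the paper states that ``the same proof can be adapted'' and omits the details.
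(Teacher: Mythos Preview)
Your proposal is correct and matches the paper's own treatment: the paper does not prove this proposition but states that ``the same proof can be adapted'' from the $s=0$ version in \cite{bas-sep-she-24, diff_timecorr} and omits the details, and your outline is exactly that adaptation --- dyadic decomposition of the far endpoints, curvature loss of order $(2^jt)^2n^{1/3}$ at each scale, a segment-level right-tail input of Proposition~\ref{ptl_upper} type, and summation over scales --- with the extra slack $s$ carried through to produce the $e^{-C_1(t+s)}$ bound. Your identification of the delicate points (large $h$, top dyadic scales, avoiding a crude endpoint union bound) is also accurate and is precisely why the paper defers to the reference arguments rather than spelling things out.
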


The next proposition generalizes the loss of free energy to paths with high transversal fluctuation somewhere along their entire length.  Recall \( Z_{\mathcal{L}_{{\bf a}}^{s_1}, \mathcal{L}_{{\bf b}}^{s_2}}^{\textup{exit}, k} \) is the partition function that sums over directed paths from  $\mathcal{L}_{{\bf a}}^{s_1}$  to  $\mathcal{L}_{{\bf b}}^{s_2}$ that exit diagonal sides of $R^{k}_{{\bf a}, {\bf b}}$. 
\begin{proposition}\label{trans_fluc_loss3}
There exist positive constants $c^*, C_1, n_0$ such that for each $n\geq n_0$, $s \geq 0$, $1 \leq t \leq  n^{1/3}$ and $0<h < e^{t}$,  we have
$$\mathbb{P}\Big(\log {Z}^{\textup{exit}, {(h+t)n^{2/3}}}_{\mathcal{L}_{0}^{hn^{2/3}}, \mathcal{L}_n^{hn^{2/3}}} - \Lambda_N \geq (-c^* t^2+s)n^{1/3}\Big) \leq e^{-C_1(t+s)}.$$
\end{proposition}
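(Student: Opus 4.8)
The statement generalizes Proposition~\ref{trans_fluc_loss}: rather than forcing the path to have transversal fluctuation exceeding $(h+t)n^{2/3}$ at the *endpoint* line $\mathcal{L}_n$, we now ask that the path exit the diagonal sides of $R^{(h+t)n^{2/3}}_{0,n}$ *somewhere* along its length. So the key idea is a union bound over the location of the exit, combined with the endpoint-only estimate from Proposition~\ref{trans_fluc_loss}. The plan is:

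\textbf{Step 1 (decompose by exit location).} Chop the diagonal direction $[0,n]$ into a dyadic-type scale of $O(\log n)$ or $O(n)$ subintervals; it suffices to use $O(n)$ integer levels $\ell$, but for a clean $(t+s)$-exponent we will want geometric blocks. For a path from $\mathcal{L}_0^{hn^{2/3}}$ to $\mathcal{L}_n^{hn^{2/3}}$ that exits the diagonal side of $R^{(h+t)n^{2/3}}_{0,n}$, let $\mathbf p$ be the first vertex on the path with $|\mathbf p - \text{(center line)}|_\infty \geq (h+t)n^{2/3}$, and say $\mathbf p$ lies at diagonal coordinate roughly $m$ (equivalently, on $\mathcal{L}_m$ for some $0\le m\le n$). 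Split the partition function at $\mathbf p$:
\[
Z^{\textup{exit},(h+t)n^{2/3}}_{\mathcal{L}_0^{hn^{2/3}},\mathcal{L}_n^{hn^{2/3}}}
\;\le\; \sum_{m,\,\mathbf p} Z_{\mathcal{L}_0^{hn^{2/3}},\mathbf p}\cdot Z_{\mathbf p,\mathcal{L}_n^{hn^{2/3}}},
\]
where $\mathbf p$ ranges over the two diagonal boundary segments of the parallelogram. The prefix factor is a partition function over paths that travel from the line $\mathcal{L}_0$ to a point at transversal displacement $\approx (h+t)n^{2/3}$ over diagonal length $m$; the suffix factor travels back down to within $hn^{2/3}$ of the center by diagonal time $n$. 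Either the prefix has diagonal length $m \ge n/2$ or the suffix has diagonal length $n-m \ge n/2$; by symmetry (reflecting the picture), assume the latter. Then the suffix is exactly (a subset of) the type of partition function controlled by Proposition~\ref{trans_fluc_loss}: a path from a point at height $\approx (h+t)n^{2/3}$ that ends within $hn^{2/3}$, over a length of order $n$, i.e.\ it loses transversal fluctuation of order $t n^{2/3}$, which by Proposition~\ref{trans_fluc_loss} costs $\approx -c^* t^2 n^{1/3}$ in free energy, up to the $e^{-C_1(t+s)}$ error. The prefix, being short, contributes at most its shape-function value plus fluctuations, which is controlled by Proposition~\ref{ptl_upper}; crucially, since $\Lambda$ is concave, the sum of the two shape values (short prefix plus long suffix) is at most $\Lambda_n$ minus a curvature gain of order $t^2 n^{1/3}$.

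\textbf{Step 2 (curvature/shape arithmetic).} Write $\Lambda_m^{\text{slanted}}$ for the shape value of the prefix direction (displacement $\approx (h+t)n^{2/3}$ over length $m$) and similarly for the suffix. Using the known parabolic curvature of $\Lambda$ near the diagonal — concretely $\Lambda\big((n,n)+(\xi n^{2/3},-\xi n^{2/3})\big) = \Lambda_n - c\,\xi^2 n^{1/3} + o(\cdot)$, a standard fact underlying Proposition~\ref{trans_fluc_loss} — one shows $\Lambda^{\text{pre}} + \Lambda^{\text{suf}} \le \Lambda_n - c'\, t^2 n^{1/3}$ uniformly in $h$ and the exit location $m$, provided $t \le n^{1/3}$ (which is assumed) so the displacement stays in the KPZ window. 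This is where the hypothesis $t \le n^{1/3}$ is used, and also where we absorb the constant $h < e^t$: the number of boundary points $\mathbf p$ to union over is $O(n^2)$, and the number of diagonal levels $m$ is $O(n)$, so the union bound costs a polynomial-in-$n$ factor; this is dominated by $e^{-C_1(t+s)}$ as long as we can afford to lose a constant fraction of the curvature gain, because on the event in question $t n^{1/3}$ is large whenever $n$ is large — wait, not necessarily, since $t$ can be as small as $1$. So the union-bound factor must instead be absorbed into the exponent by choosing the block decomposition geometrically in the transversal scale and only paying $O(\log n)$, OR — cleaner — by noting the statement only claims decay $e^{-C_1(t+s)}$, i.e.\ not superpolynomial, so a genuinely superpolynomial input (which Propositions~\ref{trans_fluc_loss} and \ref{ptl_upper} do provide, via the $\min\{t^{3/2},tn^{1/3}\}$ term when the relevant "local time" parameter is $\gtrsim \log n$) beats the polynomial union cost. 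The bookkeeping here — making the prefix's cheap contribution and the union factor both fit under a single clean $e^{-C_1(t+s)}$ — is the fiddly part.

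\textbf{Step 3 (assemble).} Combine: on the complement of a union of bad events each of probability $\le e^{-C_2(t+s)}$ (times a poly-$n$ count absorbed as above), every term $Z_{\mathcal{L}_0^{hn^{2/3}},\mathbf p}\cdot Z_{\mathbf p,\mathcal{L}_n^{hn^{2/3}}}$ is at most $e^{(\Lambda_n - c^* t^2 n^{1/3} + s n^{1/3})/(\text{const})}$-ish, sum the $O(n^2)$ terms (another poly-$n$ factor, again absorbed), and conclude
\[
\mathbb{P}\Big(\log Z^{\textup{exit},(h+t)n^{2/3}}_{\mathcal{L}_0^{hn^{2/3}},\mathcal{L}_n^{hn^{2/3}}} - \Lambda_n \ge (-c^* t^2 + s) n^{1/3}\Big) \le e^{-C_1(t+s)}.
\]
(Here one should rename $\Lambda_N$ to $\Lambda_n$ — the $N$ in the statement appears to be a typo for $n$.)

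\textbf{Main obstacle.} The real difficulty is not the geometric decomposition — that is routine — but the \emph{uniformity over the exit location $m$ and over $h$} while keeping the error at the claimed $e^{-C_1(t+s)}$ rather than something weaker. One must handle the regime where the exit happens very early or very late (so one of the two pieces is very short, length $\ll n$), in which case the ``curvature gain'' argument degrades; there the gain must instead come entirely from the long piece, and one needs Proposition~\ref{trans_fluc_loss} applied on the correct sub-scale. Getting the union bound over $O(n^2)$ boundary points and $O(n)$ levels to be swallowed by the exponential tail — rather than only by a stretched-exponential tail — is the delicate accounting that the phrase ``the same proof can be adapted, and we omit the details'' in the paper is quietly glossing over; I expect one genuinely needs the full strength $\min\{(t+s)^{3/2},(t+s)n^{1/3}\}$-type decay of the inputs (valid since the relevant local parameter exceeds a constant multiple of $\log n$ on the event after the decomposition) to close this cleanly.
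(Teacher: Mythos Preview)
The paper does not prove this proposition: it is stated in Section~2.2 among the preliminary estimates ``restate[d] \dots\ from \cite{bas-sep-she-24, diff_timecorr},'' with only the remark that ``the original estimates do not include the $s$ parameter below, the same proof can be adapted, and we omit the details.'' So there is no in-paper proof to compare against; the relevant comparison is to the arguments in those references.

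Your overall plan---split at the exit location, bound prefix and suffix via Propositions~\ref{ptl_upper} and \ref{trans_fluc_loss}, and use parabolic curvature of $\Lambda$ to produce the $-c^*t^2 n^{1/3}$ loss---is the right intuition and matches the spirit of the cited proofs. However, the specific execution you favor (a direct union over the $O(n)$ boundary lattice points of the parallelogram, relying on the super-polynomial $\min\{(t+s)^{3/2},(t+s)n^{1/3}\}$ input to swallow the polynomial prefactor) has a genuine gap in the regime $t+s = O(1)$: there the target bound $e^{-C_1(t+s)}$ is merely a constant strictly less than $1$, and a factor of $n$ in front of $e^{-C(t+s)}$ destroys it. Your own first suggestion---a geometric (dyadic) decomposition in the \emph{diagonal} direction---is in fact the standard device in the cited references and is what fixes this: one looks at the path's transversal position at levels $n/2, n/4, \dots$, so that at each successive scale the effective transversal-fluctuation parameter grows geometrically, the per-scale probabilities $e^{-C t_j}$ form a convergent geometric series, and no $\mathrm{poly}(n)$ or even $\log n$ prefactor appears. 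That argument also handles the ``exit very early / very late'' case you flagged automatically, since the short piece simply corresponds to a deep dyadic scale with a large effective $t_j$.

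(Your observation that $\Lambda_N$ should read $\Lambda_n$ is correct.)
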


Lastly, the proposition below shows when we constrain our paths to a parallelogram that obeys the KPZ scale, the free energy will not be too small.  Recall \( Z_{\mathcal{L}_{{\bf a}}^{s_1}, \mathcal{L}_{{\bf b}}^{s_2}}^{\textup{in}, k} \) is the partition function that sums over directed paths from  $\mathcal{L}_{{\bf a}}^{s_1}$  to  $\mathcal{L}_{{\bf b}}^{s_2}$ that are contained  inside of $R^{k}_{{\bf a}, {\bf b}}$. 

\begin{proposition}\label{wide_similar}
There exist positive constants $C_1, t_0$ such that for each $0< \theta \leq  100$, there exists a positive constant $n_0$ such that for each $n\geq n_0$, $t\geq t_0$ and  ${\bf p} \in \mathcal{L}_n^{\theta n^{2/3}}$, we have
$$\mathbb{P}\Big(\log {Z}^{\textup{in}, {\theta n^{2/3}}}_{0, {\bf p}} - \Lambda_n \leq -tn^{1/3}\Big) \leq  \tfrac{\sqrt{t}}{\theta}e^{-C\theta t}.$$
\end{proposition}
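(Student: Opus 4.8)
The plan is to prove the bound by induction on the scale $n$, and it is convenient to work with the deficit relative to $\Lambda({\bf p})$ rather than $\Lambda_n$. Since the inverse-gamma shape function is symmetric and strictly concave, $0\le \Lambda_n-\Lambda({\bf p})\le c^*\theta^2 n^{1/3}$ for ${\bf p}\in\mathcal{L}_n^{\theta n^{2/3}}$ (the same curvature constant $c^*$ as in Propositions \ref{trans_fluc_loss}--\ref{trans_fluc_loss3}); hence if $t_0$ is chosen with $t_0\ge 2c^*\cdot 10^4$, then $\Lambda_n-\Lambda({\bf p})\le \tfrac12 t n^{1/3}$, and the event in the statement is contained in $\{\log Z^{\textup{in},\theta n^{2/3}}_{0,{\bf p}}-\Lambda({\bf p})\le -\tfrac t2 n^{1/3}\}$. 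It therefore suffices to establish $\mathbb{P}\big(\log Z^{\textup{in},\theta n^{2/3}}_{0,{\bf p}}-\Lambda({\bf p})\le -t n^{1/3}\big)\le \tfrac{\sqrt t}{\theta}e^{-C'\theta t}$ for a slightly larger constant $C'$, and this is the statement I would induct on.

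The inductive step rests on a loss-free concatenation at the midpoint. Let ${\bf m}$ be the integer point nearest the midpoint of the segment $[0,{\bf p}]$. Because ${\bf m}$ lies on the common center line of $R^{\theta n^{2/3}}_{0,{\bf p}}$, one has $R^{\theta n^{2/3}}_{0,{\bf m}}\cup R^{\theta n^{2/3}}_{{\bf m},{\bf p}}\subseteq R^{\theta n^{2/3}}_{0,{\bf p}}$, so concatenating an in-path to ${\bf m}$ with an in-path from ${\bf m}$ yields an in-path to ${\bf p}$, giving
\[
Z^{\textup{in},\theta n^{2/3}}_{0,{\bf p}}\ \ge\ Z^{\textup{in},\theta n^{2/3}}_{0,{\bf m}}\cdot Z^{\textup{in},\theta n^{2/3}}_{{\bf m},{\bf p}} .
\]
By homogeneity $\Lambda({\bf m})+\Lambda({\bf p}-{\bf m})=\Lambda({\bf p})$ up to an $O(1)$ rounding error, so by a union bound the deficit event at scale $n$ is contained in the two corresponding deficit events at scale $\approx n/2$. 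Each half-parallelogram has width $\theta n^{2/3}=2^{2/3}\theta\,(n/2)^{2/3}$, i.e.\ width-parameter $2^{2/3}\theta$, and deficit $\tfrac t2 n^{1/3}=2^{-2/3}t\,(n/2)^{1/3}$, i.e.\ deficit-parameter $2^{-2/3}t$. The key arithmetic is that the product $\theta t$ is invariant, while the prefactor $\sqrt t/\theta$ picks up a factor $2^{-1/3}/2^{2/3}=2^{-1}$ that exactly cancels the factor $2$ from the union bound; hence the inductive hypothesis $\tfrac{\sqrt t}{\theta}e^{-C'\theta t}$ reproduces itself. This step is legitimate as long as $2^{2/3}\theta\le 100$, $2^{-2/3}t\ge t_0$, and $n/2\ge n_0(2^{2/3}\theta)$; the cumulative $O(1)$ rounding losses are harmless since the recursion has depth $O(\log(100/\theta)+\log t)$, independent of $n$, and $n_0(\theta)$ may be taken as large as needed. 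Iterating, $(\theta,t)\mapsto(2^{2/3}\theta,2^{-2/3}t)$ is driven until either $\theta$ enters a fixed range such as $(63,100]$ or $t$ drops to order $t_0$.

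For the base cases one argues directly from the one-point tails. When $t$ is of order $t_0$ and $\theta$ is not too small the claimed bound is a fixed positive number: write $Z^{\textup{in},\theta n^{2/3}}_{0,{\bf p}}=Z_{0,{\bf p}}-Z^{\textup{exit},\theta n^{2/3}}_{0,{\bf p}}$, lower-bound $Z_{0,{\bf p}}$ via the one-point lower tail (Proposition \ref{low_ub}, in its near-diagonal endpoint version) and upper-bound $Z^{\textup{exit},\theta n^{2/3}}_{0,{\bf p}}$ via the transversal-fluctuation losses (Propositions \ref{trans_fluc_loss}, \ref{trans_fluc_loss3}); for small $\theta$ the bound $\tfrac{\sqrt t}{\theta}e^{-C\theta t}$ exceeds $1$ and there is nothing to prove. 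When $\theta$ is of order $100$ but $t$ is large the same subtraction works while $t\lesssim c^*\theta^2$, and for $t\gtrsim c^*\theta^2$ one passes to a finer decomposition: split $[0,{\bf p}]$ into $K\asymp \sqrt t/\theta$ equal sub-segments, again with full-width sub-parallelograms contained in $R^{\theta n^{2/3}}_{0,{\bf p}}$, so that relative to its own scale each sub-parallelogram is very wide and (by Proposition \ref{trans_fluc_loss3}) its exit contribution is negligible, and then combine the $K$ one-point lower tails via Proposition \ref{low_ub}; homogeneity gives $\sum_j\Lambda({\bf q}_j-{\bf q}_{j-1})=\Lambda({\bf p})$, and the dominant one-point term assembles into $\tfrac{\sqrt t}{\theta}e^{-c\theta t}$.

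I expect the base case in the regime $\theta\asymp 100$, $t$ large, to be the main obstacle. The naive estimate of the exit contribution for the whole path is insufficient there, because the a priori probability that essentially all of the polymer mass leaves the parallelogram—which is exactly what must be ruled out for $Z^{\textup{in}}$ to be comparable to $Z$—does not by itself decay linearly in $t$; one has to iterate the midpoint concatenation (or, equivalently, the $K$-piece construction across several scales) down to the diffusive scale in order to squeeze the exit errors below the target level while keeping the bound in the exact form $\tfrac{\sqrt t}{\theta}e^{-C\theta t}$. The remaining ingredients—lattice rounding, the passage from $\Lambda_n$ to $\Lambda({\bf p})$, and verifying the recursion constants—are routine bookkeeping.
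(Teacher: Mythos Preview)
The paper does not prove Proposition~\ref{wide_similar}; it is quoted as one of the preliminary inputs imported from \cite{bas-sep-she-24, diff_timecorr} (see the opening sentence of Section~\ref{est_poly_bulk}), so there is no in-paper proof to compare against.

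Your midpoint-chaining scheme, with the self-similarity observation that $(\theta,t)\mapsto(2^{2/3}\theta,2^{-2/3}t)$ preserves $\theta t$ while the change in the prefactor $\sqrt t/\theta$ exactly absorbs the union-bound factor $2$, is a natural route to a bound of this shape. Your identification of the base case $\theta\asymp100$, $t$ large as the main obstacle is also accurate.

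There is, however, a genuine gap in that base case as you have written it. Carrying out your $K$-piece split with $K\asymp\sqrt t/\theta$, each sub-piece has relative width $\theta'=\theta K^{2/3}\asymp(\theta t)^{1/3}$ and relative deficit $t'=tK^{-2/3}\asymp(\theta t)^{2/3}$. The unconstrained lower tail contributes $e^{-C(t')^{3/2}}=e^{-C\theta t}$ per piece, as desired; but the exit term, bounded via Proposition~\ref{trans_fluc_loss3} with $s=c^*(\theta')^2-\tfrac12 t'\asymp(\theta t)^{2/3}$, only gives $e^{-C(\theta'+s)}\asymp e^{-C(\theta t)^{2/3}}$ per piece. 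Summing over $K$ pieces therefore yields $\tfrac{\sqrt t}{\theta}e^{-C(\theta t)^{2/3}}$, strictly weaker than the claimed $\tfrac{\sqrt t}{\theta}e^{-C\theta t}$, and since your recursion preserves the exponent this deficiency propagates to all $\theta$. To close the argument via chaining you would need the sharper exit bound of order $e^{-C(\theta')^3}$ (equivalently $e^{-Cs^{3/2}}$ in the $s$-variable), which holds for the inverse-gamma polymer but is not what Propositions~\ref{trans_fluc_loss}--\ref{trans_fluc_loss3} of this paper record. The proofs in the cited references in fact proceed through coupling with the stationary polymer rather than through chaining alone, which gives the linear exponent $\theta t$ directly and sidesteps this issue.
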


\section{Proof of the theorem }

\begin{figure}[t]
\begin{center}

\tikzset{every picture/.style={line width=0.75pt}} 

\begin{tikzpicture}[x=0.75pt,y=0.75pt,yscale=-1,xscale=1]

\draw [color={rgb, 255:red, 155; green, 155; blue, 155 }  ,draw opacity=1 ]   (100.46,110.54) -- (208.78,219.94) ;
\draw [color={rgb, 255:red, 155; green, 155; blue, 155 }  ,draw opacity=1 ]   (130.4,80.48) -- (238.72,189.88) ;
\draw  [fill={rgb, 255:red, 0; green, 0; blue, 0 }  ,fill opacity=1 ] (177.84,129.72) .. controls (177.84,128.68) and (178.68,127.84) .. (179.72,127.84) .. controls (180.76,127.84) and (181.6,128.68) .. (181.6,129.72) .. controls (181.6,130.76) and (180.76,131.6) .. (179.72,131.6) .. controls (178.68,131.6) and (177.84,130.76) .. (177.84,129.72) -- cycle ;
\draw  [fill={rgb, 255:red, 0; green, 0; blue, 0 }  ,fill opacity=1 ] (238.44,70.4) .. controls (238.44,69.36) and (239.28,68.52) .. (240.32,68.52) .. controls (241.36,68.52) and (242.2,69.36) .. (242.2,70.4) .. controls (242.2,71.44) and (241.36,72.28) .. (240.32,72.28) .. controls (239.28,72.28) and (238.44,71.44) .. (238.44,70.4) -- cycle ;
\draw    (140.97,152.06) .. controls (163.29,130.02) and (164.4,143.77) .. (179.72,129.72) ;
\draw    (106.4,116.91) .. controls (174.8,72.99) and (162.04,121.56) .. (240.32,70.4) ;
\draw  [dash pattern={on 0.84pt off 2.51pt}]  (135.14,145.33) -- (165.46,115.89) ;
\draw  [dash pattern={on 0.84pt off 2.51pt}]  (164.69,174.7) -- (195.01,145.26) ;
\draw    (141.33,91.62) .. controls (157.25,87.38) and (151.38,95.56) .. (177.82,96.22) ;
\draw [color={rgb, 255:red, 155; green, 155; blue, 155 }  ,draw opacity=1 ]   (301.79,110.54) -- (410.11,219.94) ;
\draw [color={rgb, 255:red, 155; green, 155; blue, 155 }  ,draw opacity=1 ]   (331.73,80.48) -- (440.05,189.88) ;
\draw  [fill={rgb, 255:red, 0; green, 0; blue, 0 }  ,fill opacity=1 ] (379.17,129.72) .. controls (379.17,128.68) and (380.02,127.84) .. (381.05,127.84) .. controls (382.09,127.84) and (382.93,128.68) .. (382.93,129.72) .. controls (382.93,130.76) and (382.09,131.6) .. (381.05,131.6) .. controls (380.02,131.6) and (379.17,130.76) .. (379.17,129.72) -- cycle ;
\draw  [fill={rgb, 255:red, 0; green, 0; blue, 0 }  ,fill opacity=1 ] (439.77,70.4) .. controls (439.77,69.36) and (440.62,68.52) .. (441.65,68.52) .. controls (442.69,68.52) and (443.53,69.36) .. (443.53,70.4) .. controls (443.53,71.44) and (442.69,72.28) .. (441.65,72.28) .. controls (440.62,72.28) and (439.77,71.44) .. (439.77,70.4) -- cycle ;
\draw    (342.3,152.06) .. controls (364.62,130.02) and (365.73,143.77) .. (381.05,129.72) ;
\draw    (353.2,142.8) .. controls (421.6,98.87) and (363.38,121.56) .. (441.65,70.4) ;
\draw  [dash pattern={on 0.84pt off 2.51pt}]  (336.48,143.73) -- (366.8,114.29) ;
\draw  [dash pattern={on 0.84pt off 2.51pt}]  (364.82,174.7) -- (395.14,145.26) ;
\draw  [fill={rgb, 255:red, 0; green, 0; blue, 0 }  ,fill opacity=1 ] (139.65,89.93) -- (143.02,89.93) -- (143.02,93.3) -- (139.65,93.3) -- cycle ;
\draw    (131.79,151.84) -- (157.81,178.96) ;
\draw [shift={(159.2,180.4)}, rotate = 226.17] [color={rgb, 255:red, 0; green, 0; blue, 0 }  ][line width=0.75]    (10.93,-3.29) .. controls (6.95,-1.4) and (3.31,-0.3) .. (0,0) .. controls (3.31,0.3) and (6.95,1.4) .. (10.93,3.29)   ;
\draw [shift={(130.4,150.4)}, rotate = 46.17] [color={rgb, 255:red, 0; green, 0; blue, 0 }  ][line width=0.75]    (10.93,-3.29) .. controls (6.95,-1.4) and (3.31,-0.3) .. (0,0) .. controls (3.31,0.3) and (6.95,1.4) .. (10.93,3.29)   ;
\draw    (332.19,152.24) -- (358.21,179.36) ;
\draw [shift={(359.6,180.8)}, rotate = 226.17] [color={rgb, 255:red, 0; green, 0; blue, 0 }  ][line width=0.75]    (10.93,-3.29) .. controls (6.95,-1.4) and (3.31,-0.3) .. (0,0) .. controls (3.31,0.3) and (6.95,1.4) .. (10.93,3.29)   ;
\draw [shift={(330.8,150.8)}, rotate = 46.17] [color={rgb, 255:red, 0; green, 0; blue, 0 }  ][line width=0.75]    (10.93,-3.29) .. controls (6.95,-1.4) and (3.31,-0.3) .. (0,0) .. controls (3.31,0.3) and (6.95,1.4) .. (10.93,3.29)   ;

\draw (386.36,117.84) node [anchor=north west][inner sep=0.75pt]    {$( r,r)$};
\draw (240.96,52.04) node [anchor=north west][inner sep=0.75pt]    {$( n,n)$};
\draw (135.36,66.04) node [anchor=north west][inner sep=0.75pt]    {$u_{r}^{\textup{max}}$};
\draw (118.8,167.44) node [anchor=north west][inner sep=0.75pt]    {$r^{2/3}$};
\draw (318.4,167.84) node [anchor=north west][inner sep=0.75pt]    {$r^{2/3}$};
\draw (444.96,51.04) node [anchor=north west][inner sep=0.75pt]    {$( n,n)$};

\end{tikzpicture}

\captionsetup{width=0.8\textwidth}
\caption{The heuristic suggests that with high probability close to \(1 - (r/n)^{2/3}\), the ``optimal paths" for the two free energies $\log Z_{\mathcal{L}_0, r}$ and $\log Z_{\mathcal{L}_0, n}$ are disjoint (illustrated on the left), resulting in no contribution to the covariance  $\textup{$\mathbb{C}$ov}\big(\log Z_{\mathcal{L}_{0}, r}, \log Z_{\mathcal{L}_{0}, n}\big)$. 
However, in the rare event with a probability of order \((r/n)^{2/3}\), the ``optimal paths" will significantly overlap (illustrated on the right). This overlap leads to the desired order of covariance \((r/n)^{2/3} \cdot r^{2/3}\), where the term \(r^{2/3}\) comes from the variance of $\log Z_{\mathcal{L}_0, r}$. Finally, in the analysis, part of the free energy $\log Z_{\mathcal{L}_0, n}$ is approximated by the line-to-point free energy $\log Z_{\mathcal{L}_r, n}$, with the maximizer labeled as $u^{\textup{max}}_r$ on the left. This maximizer is expected to be of order \(n^{2/3}\) away from the diagonal.}  \label{fig1}
\end{center}
\end{figure}
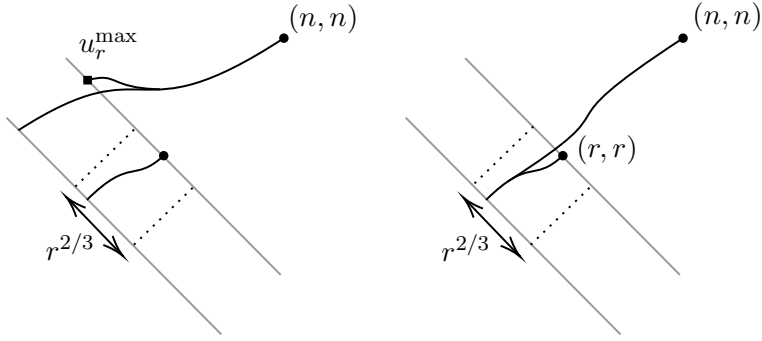
We follow the geometric approach outlined in the zero temperature work in \cite{timecorrflat}, with heuristics tracing back to \cite{Ferr-Spoh-2016}. 
However, several of our estimates will differ from those in \cite{timecorrflat} due to the lack of geodesics. 

First, to start the proof, we may assume that $r/n \leq \ell_0$ for some absolute constant $0<\ell_0<1$, because otherwise by Cauchy-Schwartz inequality, 
\begin{align*}\textup{$\mathbb{C}$ov}\Big(\log Z_{\mathcal{L}_{\mathbf 0}, (r,r)}, \log Z_{\mathcal{L}_{\mathbf 0}, (n,n)}\Big) &\leq \Var\Big(\log Z_{\mathcal{L}_{\mathbf 0}, (r,r)}\Big)^{1/2} \Var\Big(\log Z_{\mathcal{L}_{\mathbf 0}, (n,n)}\Big)^{1/2}\\
& \leq Cr^{1/3}n^{1/3} \qquad \text{by Proposition \ref{varbd}}\\
&\leq C\ell_0^{-1} r^{2/3} \leq C\ell_0^{-2} \frac{r^{4/3}}{n^{2/3}}.
\end{align*} 
Thus, in our proof, we may assume that the value $\ell_0$ is sufficiently small whenever needed. 

Next, let $\mathbf{u}^{\textup{max}}_r$ denote the unique maximizer of 
$$\max \Big\{\log Z_{\mathbf{u}, n} : \mathbf u \in \mathcal{L}_r \Big\}.$$
Fix $j_0$ such that $10^9j_0 r^{2/3} = n$.
For $j=1,2, \dots, j_0$, let us define the event
$$A_j = \Big\{\Big|\mathbf{u}^{\textup{max}}_r\cdot \mathbf e_1 - \mathbf{u}^{\textup{max}}_r \cdot \mathbf e_2\Big|  \in \Big[10^9 (j-1)r^{2/3}, 10^9 jr^{2/3}\Big)\Big\}.$$
Let $\wt j = 1\vee \floor{\log^{10} j}$, and note that $\wt j + 100 \leq 10^9 j$ for $j = 1, \dots, j_0$. Now, define $B_j \subset A_j$ to be the event that in addition, the following inequality holds
$$ \log Z_{\mathbf{u}^{\textup{max}}_r, n} - \max_{\mathbf{u} \in \mathcal{L}_r^{5 \wt j r^{2/3}}} \log Z_{\mathbf{u},n}  \geq  {\wt j}^{1/2} r^{1/3}.$$
Finally, let $C_j = A_j \setminus B_j$.

Let  $\mathcal{F}_r$ to denote the $\sigma$-algebra generated by weights $Y_\mathbf{z}$ for $\mathbf z$ that lie on or above the anti-diagonal line $\mathcal{L}_r$. By independence, we have that 
\begin{align*}\Cov\Big(\log Z_{\mathcal{L}_0, r}, \log Z_{\mathcal{L}_0, n}\Big) &= \mathbb{E}\Big[\log Z_{\mathcal{L}_0, r} \log Z_{\mathcal{L}_0, n}\Big] - \mathbb{E}\Big[\log Z_{\mathcal{L}_0, r}\Big]\mathbb{E}\Big[ \log Z_{\mathcal{L}_0, n}\Big]\\
& = \mathbb{E}\Big[\mathbb{E}\Big[\log Z_{\mathcal{L}_0, r} \log Z_{\mathcal{L}_0, n} - \mathbb{E}\big[\log Z_{\mathcal{L}_0, r}\big|\mathcal{F}_r\big]\mathbb{E}\big[ \log Z_{\mathcal{L}_0, n}\big|\mathcal{F}_r\big]\Big|\mathcal{F}_r\Big]\Big]\\
& =\mathbb{E}\Big[\Cov_{\mathcal{F}_r}(\log Z_{\mathcal{L}_0, r}, \log Z_{\mathcal{L}_0, n})\Big].
\end{align*}
where $\Cov_{\mathcal{F}_r}$ denotes the conditional covariance with respect to $\mathcal{F}_r$.

We now decompose the expectation above according to the disjoint events $B_j$ and $C_j$, in particular, we note that all $B_j$ and $C_j$ are $\mathcal{F}_r$-measurable. Now, to conclude the proof,  it suffices for us to bound
\begin{align}
\mathbb{E}[\Cov_{\mathcal{F}_r}(\log Z_{\mathcal{L}_0, r}, \log Z_{\mathcal{L}_0, n})] & =\sum_{j=1}^{j_0} \mathbb{E} \Big[\mathbbm{1}_{B_j} \Cov_{\mathcal{F}_r}(\log Z_{\mathcal{L}_0, r}, \log Z_{\mathcal{L}_0, n})\Big]\label{Bevent}\\
&\qquad \qquad + \sum_{j=1}^{j_0} \mathbb{E} \Big[\mathbbm{1}_{C_j} \Cov_{\mathcal{F}_r}(\log Z_{\mathcal{L}_0, r}, \log Z_{\mathcal{L}_0, n})\Big]\label{Cevent}
\end{align}

First, we will look at the expectation inside the sum from \eqref{Bevent}. Recall the restricted free energies $\log Z_{\mathcal{L}_0, \bbullet}^{\textup{in}, R_{0,r}^{\wt j r^{2/3}}}$ and $\log Z_{\mathcal{L}_0, \bbullet}^{\textup{out}, R_{0,r}^{\wt j r^{2/3}}}$, which are defined in Section \ref{not}.
We have the following equality
\begin{align}&\mathbb{E}\Big[\mathbbm{1}_{B_j} \Cov_{\mathcal{F}_r}(\log Z_{\mathcal{L}_0, r}, \log Z_{\mathcal{L}_0, n})\Big]\nonumber\\
& =\mathbb{E}\Big[\mathbbm{1}_{B_j} \Cov_{\mathcal{F}_r}\Big(\log Z_{\mathcal{L}_0, r} - \log Z_{\mathcal{L}_0, r}^{\textup{in}, \wt j r^{2/3}}, \log Z_{\mathcal{L}_0, n}- Z_{\mathcal{L}_0, n}^{\textup{out}, R_{0,r}^{\wt j r^{2/3}}}\Big)\Big]\label{D1}\\
& \qquad + \mathbb{E}\Big[\mathbbm{1}_{B_j} \Cov_{\mathcal{F}_r}\Big( \log Z_{\mathcal{L}_0, r}^{\textup{in}, \wt j r^{2/3}}, \log Z_{\mathcal{L}_0, n}- Z_{\mathcal{L}_0, n}^{\textup{out}, R_{0,r}^{\wt j r^{2/3}}}\Big)\Big]\label{D2}\\
&\qquad \qquad+\mathbb{E}\Big[\mathbbm{1}_{B_j} \Cov_{\mathcal{F}_r}\Big(\log Z_{\mathcal{L}_0, r} - \log Z_{\mathcal{L}_0, r}^{\textup{in}, \wt j r^{2/3}}, \log Z^{\textup{out}, R_{0,r}^{\wt j r^{2/3}}}_{\mathcal{L}_0, n}\Big)\Big]\label{D3}
\end{align}
We will be bounding each of the three lines above using several estimates, and these estimates will be proven in the next section.

We start with \eqref{D1} and \eqref{D2}, which uses the following result. 
\begin{proposition}\label{prop_e} There exist positive constant $C_1, C_2, r_0$ such that for each $r\geq r_0$ and $j=1,\dots, j_0$, the following estimates hold:
\begin{enumerate}[(i)]
\item $\Var\Big(\log Z_{\mathcal{L}_0, r}^{\textup{in}, \wt j r^{2/3}}\Big)^{1/2} \leq C_1r^{1/3}$\label{e1}
\item $\mathbb{E}\Big[\Big(\log Z_{\mathcal{L}_0, r} - \log Z_{\mathcal{L}_0, r}^{\textup{in}, \wt j r^{2/3}}\Big)^{10}\Big]\leq e^{-C_2\wt j}r^{10/3}$\label{e2}
\item $\mathbb{E}\Big[\mathbbm{1}_{B_j} \mathbb{E}\Big[\Big( \log Z_{\mathcal{L}_0, n}- Z_{\mathcal{L}_0, n}^{\textup{out}, R_{0,r}^{\wt j r^{2/3}}}\Big)^2\,\Big|\,\mathcal{F}_r \Big]^{1/2}\Big] \leq C_1e^{-C_2\wt j} \frac{r}{n^{2/3}} \log^{100}(n/r)$\label{e3}
\end{enumerate}
\end{proposition}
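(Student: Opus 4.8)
All three estimates concern a difference $\log Z_{A,B}-\log Z_{A,B}^{\star}$ between a (line‑to‑)point free energy and the same free energy with paths constrained to, or away from, a parallelogram. Such a difference is (up to sign) nonnegative, and
\[
0\le \log Z_{A,B}-\log Z^{\textup{in}}_{A,B}=\log\!\Big(1+\frac{Z_{A,B}-Z^{\textup{in}}_{A,B}}{Z^{\textup{in}}_{A,B}}\Big)\le \frac{Z_{A,B}-Z^{\textup{in}}_{A,B}}{Z^{\textup{in}}_{A,B}},
\]
and similarly $\log Z_{\mathcal{L}_0,n}-\log Z^{\textup{out},R^{\wt j r^{2/3}}_{0,r}}_{\mathcal{L}_0,n}\le Z^{\textup{touch},R^{\wt j r^{2/3}}_{0,r}}_{\mathcal{L}_0,n}/Z^{\textup{out},R^{\wt j r^{2/3}}_{0,r}}_{\mathcal{L}_0,n}$. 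The numerators involve only paths with transversal fluctuation of order $\wt j r^{2/3}$ (an up‑right path from $\mathcal{L}_0$ to $(r,r)$ that leaves the width‑$\wt j r^{2/3}$ parallelogram around the diagonal either starts outside $\mathcal{L}_0^{\wt j r^{2/3}}$ or crosses a diagonal side), so they carry a free‑energy penalty of order $\wt j^2 r^{1/3}$ by Propositions \ref{trans_fluc_loss} and \ref{trans_fluc_loss3}; the reversal symmetry of the inverse‑gamma polymer converts ``far starting point'' into ``far ending point'' so that Proposition \ref{trans_fluc_loss} applies. The denominators are bounded below by restricting to the single starting point $(0,0)$ and the thin tube $R^{r^{2/3}}_{0,r}\subseteq R^{\wt j r^{2/3}}_{0,r}$ and invoking Proposition \ref{wide_similar} with $\theta=1$. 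Finally I will use the crude a priori bounds $\mathbb{E}[|\log Z_{\mathcal{L}_0,(m,m)}|^{p}]\le C_p m^{p}$ and $\mathbb{E}[|\log Z^{\textup{in},r^{2/3}}_{0,r}|^{p}]\le C_p r^{p}$, valid for every $p$ because $\log Z^{\textup{in},r^{2/3}}_{0,r}\ge\sum_{i<r}\log Y_{(i,i)}$ and $\log Y$ has moments of all orders, to absorb rare events; for the $\mathcal{F}_r$‑conditional statements I use that $Z_{\mathcal{L}_0,r}$ and $Z^{\textup{in},\wt j r^{2/3}}_{\mathcal{L}_0,r}$ are independent of $\mathcal{F}_r$, that $\mathbf{u}^{\textup{max}}_r$ and each $Z_{\mathbf{w},(n,n)}$ with $\mathbf{w}\in\mathcal{L}_r$ are $\mathcal{F}_r$‑measurable, and that each $Z_{\mathcal{L}_0,\mathbf{w}}$ with $\mathbf{w}\in\mathcal{L}_r$ is independent of $\mathcal{F}_r$.

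\textbf{Part (i).} Set $X=\log Z^{\textup{in},\wt j r^{2/3}}_{\mathcal{L}_0,r}$, so $\Var(X)\le\mathbb{E}[(X-\Lambda_r)_+^2]+\mathbb{E}[(X-\Lambda_r)_-^2]$. Since $Z^{\textup{in},r^{2/3}}_{0,r}\le Z^{\textup{in},\wt j r^{2/3}}_{\mathcal{L}_0,r}\le Z_{\mathcal{L}_0,r}$, the positive part is dominated by $(\log Z_{\mathcal{L}_0,r}-\Lambda_r)_+$, whose second moment is $\le C r^{2/3}$ by the right‑tail bound (Proposition \ref{ptl_upper} for $\mathcal{L}_0^{r^{2/3}}$ together with Proposition \ref{trans_fluc_loss}, summed over dyadic annuli of $\mathcal{L}_0$ and using reversal, to treat the full line), and the negative part is dominated by $(\log Z^{\textup{in},r^{2/3}}_{0,r}-\Lambda_r)_-$, whose second moment is $\le C r^{2/3}$ by Proposition \ref{wide_similar} with $\theta=1$. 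No dependence on $\wt j$ is needed.

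\textbf{Part (ii).} Write $W_j=\log Z_{\mathcal{L}_0,r}-\log Z^{\textup{in},\wt j r^{2/3}}_{\mathcal{L}_0,r}\ge 0$ and $D_j=Z_{\mathcal{L}_0,r}-Z^{\textup{in},\wt j r^{2/3}}_{\mathcal{L}_0,r}$. If $\wt j>r^{1/3}$ then $W_j=0$, since every up‑right path from $\mathcal{L}_0$ to $(r,r)$ stays within transversal distance $r<\wt j r^{2/3}$ of the diagonal, so I may assume $\wt j\le r^{1/3}$. By the reduction, $\{W_j>u\}=\{Z^{\textup{in},\wt j r^{2/3}}_{\mathcal{L}_0,r}<D_j/(e^{u}-1)\}$, hence (using $Z^{\textup{in},\wt j r^{2/3}}_{\mathcal{L}_0,r}\ge Z^{\textup{in},r^{2/3}}_{0,r}$), for any $a$,
\[
\mathbb{P}(W_j>u)\le \mathbb{P}\big(\log Z^{\textup{in},r^{2/3}}_{0,r}-\Lambda_r<a\big)+\mathbb{P}\big(\log D_j-\Lambda_r> a+\log(e^{u}-1)\big).
\]
Bounding $D_j\le Z_{\mathcal{L}_0\setminus\mathcal{L}_0^{\wt j r^{2/3}},(r,r)}+Z^{\textup{exit},\wt j r^{2/3}}_{\mathcal{L}_0^{\wt j r^{2/3}},(r,r)}$, and for the exit term splitting the starting segment into $\mathcal{L}_0^{\wt j r^{2/3}/2}$ (to which Proposition \ref{trans_fluc_loss3} applies with $h=t=\wt j/2$, the constraint $h<e^t$ being automatic and $t\le r^{1/3}$ holding since $\wt j\le r^{1/3}$) and its complement (treated like the first term via reversal and Proposition \ref{trans_fluc_loss}), one gets $\mathbb{P}(\log D_j-\Lambda_r\ge (-c^*\wt j^2+s)r^{1/3})\le C e^{-c(\wt j+s)}$ for all $s\ge 0$. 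Choosing $a=-\tfrac12(u+c^*\wt j^2 r^{1/3})$ to balance the two terms and applying Proposition \ref{wide_similar} ($\theta=1$) to the first produces a tail bound
\[
\mathbb{P}(W_j>u)\ \le\ C\exp\big(-c\,\wt j^{2}-c\min\{(u/r^{1/3})^{3/2},\,u\}\big)\qquad\text{for }u\ge u_0,
\]
with $\mathbb{P}(W_j>u)\le 1$ otherwise, $u_0$ an absolute constant. The essential point is that the decay is in the \emph{unscaled} $u$; integrating $\int_0^\infty 10u^9\,\mathbb{P}(W_j>u)\,du$, the range $u<u_0$ contributes only a constant while the tail contributes $C e^{-c\wt j^2}$, so (using $\wt j\ge1$) $\mathbb{E}[W_j^{10}]\le C e^{-c\wt j^2}\le e^{-C_2\wt j}r^{10/3}$ once $r\ge r_0$. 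I expect the delicate step to be making this tail estimate uniform in $\wt j$ down to excesses $u$ of order one rather than of order $r^{1/3}$, i.e.\ controlling the left tail of the restricted line‑to‑point free energy $\log Z^{\textup{in},\wt j r^{2/3}}_{\mathcal{L}_0,r}$ at sub‑fluctuation scales; for the finitely many bounded values of $\wt j$ where this degenerates one falls back on the crude moment bound, which already suffices because the target then exceeds a fixed constant.

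\textbf{Part (iii).} Here $\log Z_{\mathcal{L}_0,n}-\log Z^{\textup{out},R^{\wt j r^{2/3}}_{0,r}}_{\mathcal{L}_0,n}\le Z^{\textup{touch}}_{\mathcal{L}_0,n}/Z^{\textup{out}}_{\mathcal{L}_0,n}$, and every touching path crosses $\mathcal{L}_r$ at a point within $O(\wt j r^{2/3})$ of the diagonal, so $Z^{\textup{touch}}_{\mathcal{L}_0,n}\le \sum_{\mathbf{w}\in\mathcal{L}_r^{O(\wt j r^{2/3})}} Z_{\mathcal{L}_0,\mathbf{w}}\,Z_{\mathbf{w},(n,n)}$; on $A_j$ the reference route through $\mathbf{u}^{\textup{max}}_r$ (at transversal distance $\asymp jr^{2/3}$, avoiding the parallelogram with high conditional probability because its transversal fluctuations are only $O(r^{2/3})$) gives $Z^{\textup{out}}_{\mathcal{L}_0,n}\gtrsim Z_{\mathcal{L}_0,\mathbf{u}^{\textup{max}}_r}\,Z_{\mathbf{u}^{\textup{max}}_r,(n,n)}$. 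Thus the ratio is controlled by $\sum_{\mathbf{w}}\exp(\log Z_{\mathbf{w},(n,n)}-\log Z_{\mathbf{u}^{\textup{max}}_r,(n,n)}+\log Z_{\mathcal{L}_0,\mathbf{w}}-\log Z_{\mathcal{L}_0,\mathbf{u}^{\textup{max}}_r})$, and on $B_j$ the defining gap $\wt j^{1/2}r^{1/3}$, combined with the uniform strict concavity of the shape function $\Lambda$ (so that moving the crossing point to distance $d$ loses $\asymp d^2/n$ in $\log Z_{\mathbf{u},(n,n)}$ and $\asymp d^2/r$ in $\log Z_{\mathcal{L}_0,\mathbf{u}}$) and the estimates of Propositions \ref{trans_fluc_loss}, \ref{trans_fluc_loss3}, \ref{wide_similar} applied on both sides of $\mathcal{L}_r$, forces every summand to be suboptimal by a controlled amount; carrying out this comparison on $B_j$ yields the prefactor $e^{-C_2\wt j}$, summing over the $\asymp \wt j r^{2/3}$ crossing points against the local Brownian‑scale fluctuations of $\mathbf{u}\mapsto\log Z_{\mathbf{u},(n,n)}$ together with the ratio $r\ll n$ of the two time scales produces the factor $r/n^{2/3}$, and $\log^{100}(n/r)$ collects the polylogarithmic losses from chaining these estimates across the $\asymp\log(n/r)$ dyadic scales between $r$ and $n$. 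I expect this last quantitative geometry — extracting exactly $r/n^{2/3}$ with only a polylogarithmic error while using $B_j$ in place of integrable inputs about the Airy process — to be the main obstacle, and the step where the bulk of the new work lies.
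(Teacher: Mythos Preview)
Your argument for (i) is the paper's: sandwich $\log Z^{\textup{in},r^{2/3}}_{0,r}\le \log Z^{\textup{in},\wt j r^{2/3}}_{\mathcal L_0,r}\le \log Z_{\mathcal L_0,r}$ and read off centered second moments from Propositions~\ref{ptl_upper} and~\ref{wide_similar}.

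For (ii) you take a different but workable route. The paper does not build a tail bound from scratch; it imports from \cite{diff_timecorr} the concentration estimate
\[
\mathbb P\Big(\log Z_{\mathcal L_0,r}-\log Z^{\textup{in},\wt j r^{2/3}}_{\mathcal L_0,r}\ge e^{-C_1\wt j^2 r^{1/3}}\Big)\le e^{-C_2\wt j^3},
\]
splits on this event, and on the rare part applies Cauchy--Schwarz with a crude $20$th-moment bound. Your direct approach essentially rederives that input. One quantitative slip: the left tail from Proposition~\ref{wide_similar} decays like $e^{-c\,u/r^{1/3}}$, not in the ``unscaled $u$'' as you assert, so integrating $10u^9$ against your tail produces the $r^{10/3}$ factor directly (your claimed bound ``$\mathbb E[W_j^{10}]\le Ce^{-c\wt j^2}$'' is missing this factor). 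This does not affect the conclusion.

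For (iii) your sketch misidentifies the mechanism and would need to be reworked. The paper does \emph{not} sum over $\asymp\wt j r^{2/3}$ crossing points or invoke local Brownian fluctuations of $\mathbf u\mapsto\log Z_{\mathbf u,(n,n)}$ to manufacture $r/n^{2/3}$. Instead it proves a pointwise conditional tail bound
\[
\mathbbm 1_{B_j}\,\mathbb P\Big(\log Z_{\mathcal L_0,n}-\log Z^{\textup{out},R^{\wt j r^{2/3}}_{0,r}}_{\mathcal L_0,n}>tr^{1/3}\,\Big|\,\mathcal F_r\Big)\le e^{-C(\wt j^{1/2}+t)}
\]
as follows: bound $\log Z^{\textup{touch}}$ by $\max_{\mathbf p\in\mathcal L_r}\{\log Z^{\textup{touch}}_{\mathcal L_0,\mathbf p}+\log Z_{\mathbf p,n}\}$ and split on whether the maximizer lies in $\mathcal L_r^{4\wt j r^{2/3}}$. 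If it does, the defining $B_j$-gap gives $\max_{\mathbf v\in\mathcal L_r^{4\wt j r^{2/3}}}\log Z_{\mathbf v,n}<\log Z_{\mathbf u^{\textup{max}}_r,n}-\wt j^{1/2}r^{1/3}$, reducing to a purely $r$-scale comparison between $\max_{\mathbf u}\log Z_{\mathcal L_0,\mathbf u}$ and $\log Z^{\textup{in},\wt j r^{2/3}}_{\mathcal L_0,\mathbf u^{\textup{max}}_r}$ (disjoint from $R^{\wt j r^{2/3}}_{0,r}$ since $j\ge2$), handled by Propositions~\ref{ptl_upper} and~\ref{wide_similar}. If it does not, the touching constraint forces large transversal fluctuation; a decomposition of start/end segments into blocks of width $\wt j r^{2/3}$ together with Propositions~\ref{trans_fluc_loss} and~\ref{trans_fluc_loss3} yields the same exponential decay. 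Integrating this conditional tail gives the $e^{-c\wt j^{1/2}}r^{1/3}$ scale, and the factor $(r/n)^{2/3}$ with its polylog comes \emph{entirely} from $\mathbb P(B_j)\le Cj(r/n)^{2/3}\log^{20}(n/r)$ (Proposition~\ref{bj}) in the outer expectation. So $r/n^{2/3}$ is a smallness-of-$B_j$ effect, not a two-time-scale profile comparison; your proposed route through ``Brownian-scale fluctuations'' and summing crossing points is not the argument used.
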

Then, by Cauchy-Schwartz inequality, the fact that $\Var(X) \leq \mathbb{E}[X^2]$, and the random variable $\log Z_{\mathcal{L}_0, r} - \log Z_{\mathcal{L}_0, r}^{\textup{in}, \wt j r^{2/3}}$ is independent of $\mathcal{F}_r$, we obtain
\begin{align*}
\eqref{D1} &\leq \mathbb{E}\Big[\Big(\log Z_{\mathcal{L}_0, r} - \log Z_{\mathcal{L}_0, r}^{\textup{in}, \wt j r^{2/3}}\Big)^2\Big]^{1/2}  \cdot \mathbb{E}\Big[\mathbbm{1}_{B_j} \mathbb{E}\Big[\Big( \log Z_{\mathcal{L}_0, n}- Z_{\mathcal{L}_0, n}^{\textup{out}, R_{0,r}^{\wt j r^{2/3}}}\Big)^2\,\Big|\,\mathcal{F}_r \Big]^{1/2}\Big]\\
&\leq C_1 e^{-C_2 \wt j}\frac{r^{4/3}}{n^{2/3}}\log^{100}(n/r) \qquad \textup{ by Proposition \ref{prop_e} \ref{e2} and \ref{e3}}\\
\eqref{D2} &\leq \Var\Big(\log Z_{\mathcal{L}_0, r}^{\textup{in}, \wt j r^{2/3}}\Big)^{1/2}  \cdot \mathbb{E}\Big[\mathbbm{1}_{B_j} \mathbb{E}\Big[\Big( \log Z_{\mathcal{L}_0, n}- Z_{\mathcal{L}_0, n}^{\textup{out}, R_{0,r}^{\wt j r^{2/3}}}\Big)^2\,\Big|\,\mathcal{F}_r \Big]^{1/2}\Big]\\
&\leq C_1 e^{-C_2 \wt j}\frac{r^{4/3}}{n^{2/3}}\log^{100}(n/r) \qquad \textup{ by Proposition \ref{prop_e} \ref{e1} and \ref{e3}}
\end{align*}
The estimate for \eqref{D3} is different. Note that because $\log Z_{\mathcal{L}_0, r} - \log Z_{\mathcal{L}_0, r}^{\textup{in}, \wt j r^{2/3}} \geq 0$ by definition, then 
\begin{align}
\eqref{D3} &\leq  \mathbb{E}\Big[\mathbbm{1}_{B_j} \Big(\log Z_{\mathcal{L}_0, r} - \log Z_{\mathcal{L}_0, r}^{\textup{in}, \wt j r^{2/3}}\Big)\Big|\log Z^{\textup{out}, R_{0,r}^{\wt j r^{2/3}}}_{\mathcal{L}_0, n} - \mathbb{E}\Big[\log Z^{\textup{out}, R_{0,r}^{\wt j r^{2/3}}}_{\mathcal{L}_0, n}\,\Big|\,\mathcal{F}_r\Big]\Big|\Big]\nonumber\\
&\leq  \int_{[0, \infty)} \mathbb{P}\Big(\mathbbm{1}_{B_j} \Big(\log Z_{\mathcal{L}_0, r} - \log Z_{\mathcal{L}_0, r}^{\textup{in}, \wt j r^{2/3}}\Big)\Big|\log Z^{\textup{out}, R_{0,r}^{\wt j r^{2/3}}}_{\mathcal{L}_0, n} - \mathbb{E}\Big[\log Z^{\textup{out}, R_{0,r}^{\wt j r^{2/3}}}_{\mathcal{L}_0, n}\,\Big|\,\mathcal{F}_r\Big]\Big| \geq x\Big) dx \nonumber\\
&\leq r^{2/3}\log^2(n/r) \int_{[0, \infty)} \mathbb{P}\Big(\mathbbm{1}_{B_j} \Big(\log Z_{\mathcal{L}_0, r} - \log Z_{\mathcal{L}_0, r}^{\textup{in}, \wt j r^{2/3}}\Big)\nonumber\\
& \qquad \qquad \qquad \qquad \qquad \qquad \Big|\log Z^{\textup{out}, R_{0,r}^{\wt j r^{2/3}}}_{\mathcal{L}_0, n} - \mathbb{E}\Big[\log Z^{\textup{out}, R_{0,r}^{\wt j r^{2/3}}}_{\mathcal{L}_0, n}\,\Big|\,\mathcal{F}_r\Big]\Big| \geq tr^{2/3}\log^2(n/r)\Big) dt \nonumber\\
&\leq r^{2/3}\log^2(n/r) \int_{[0, \infty)} \mathbb{P}\Big(B_j \cap \Big\{\log Z_{\mathcal{L}_0, r} - \log Z_{\mathcal{L}_0, r}^{\textup{in}, \wt j r^{2/3}} \geq \wt j^{-10}\sqrt{t}r^{1/3}\log(n/r)\Big\}\Big) \label{term1}\\
& \qquad   + \mathbb{E}\Big[ \mathbbm{1}_{B_j}\mathbb{P}\Big(\Big|\log Z^{\textup{out}, R_{0,r}^{\wt j r^{2/3}}}_{\mathcal{L}_0, n} - \mathbb{E}\Big[\log Z^{\textup{out}, R_{0,r}^{\wt j r^{2/3}}}_{\mathcal{L}_0, n}\,\Big|\,\mathcal{F}_r\Big]\Big| \geq \wt j^{10}\sqrt{t}r^{1/3}\log(n/r)\Big) \,\Big|\, \mathcal{F}_r\Big)\Big] dt . \label{term2}
\end{align}
Now we will bound the probability and the expectation in \eqref{term1} and \eqref{term2} respectively. In the process, we will also make use of the following results. 
\begin{proposition}\label{bj}There exist positive constants $C_1, r_0$ such that for each $r\geq r_0$ and $j=1,\dots, j_0$, the following estimate holds
$$\mathbb{P}(B_j) \leq C_1j (r/n)^{2/3} \log^{20}(n/r).$$
\end{proposition}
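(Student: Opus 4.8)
\textbf{Proof proposal for Proposition~\ref{bj}.}

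The plan is to bound $\mathbb{P}(B_j)$ by separately controlling the two features that define $B_j$: first, that the maximizer $\mathbf{u}^{\textup{max}}_r$ of $\mathbf{u}\mapsto \log Z_{\mathbf{u},n}$ over $\mathcal{L}_r$ lies at transversal distance roughly $10^9 j r^{2/3}$ from the diagonal (the event $A_j$), and second, that the free energy $\log Z_{\mathbf{u}^{\textup{max}}_r,n}$ at that far-out maximizer exceeds the maximal free energy over the much thinner window $\mathcal{L}_r^{5\wt j r^{2/3}}$ by at least $\wt j^{1/2} r^{1/3}$. Since $B_j\subset A_j$, I will first get the bound $\mathbb{P}(A_j)\lesssim j (r/n)^{2/3}$, which is the source of the $(r/n)^{2/3}$ factor, and then use the extra defining inequality of $B_j$ together with a union bound over a $\Theta(r^{2/3})$-spaced mesh of starting points on $\mathcal{L}_r$ at distance $\asymp j r^{2/3}$ to produce the additional $\log^{20}(n/r)$ gain.

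Here are the steps in order. (1) Bound $\mathbb{P}(A_j)$: on $A_j$ the optimal starting point over $\mathcal{L}_r$ sits at distance $d:=10^9(j-1)r^{2/3}$ to $10^9 j r^{2/3}$ from the diagonal, so the free energy from $\mathcal{L}_0$ to $(n,n)$ routed through $\mathcal{L}_r$ must pick up its maximum through a point whose direction to $(n,n)$ deviates at the transversal scale $d$. Comparing $\log Z_{\mathbf{u}^{\textup{max}}_r,n}$ against $\max_{\mathbf u\in\mathcal{L}_r^{Cr^{2/3}}}\log Z_{\mathbf u,n}$ (the "centered" contribution) and using the curvature of the shape function, the event $A_j$ forces a free-energy excess of order $-c^* (jr^{2/3}/n^{2/3})^2 n^{1/3} = -c^* j^2 r^{4/3}/n$ in the transversal-fluctuation-loss estimates of Proposition~\ref{trans_fluc_loss}/\ref{trans_fluc_loss3} (with $t\asymp j r^{2/3}/n^{2/3}$, which is $\le n^{1/3}$ since $r\le n/2$), and this must be overcome; comparing with a lower-bound for the centered free energy (Proposition~\ref{wide_similar}) and for $\log Z_{\mathcal{L}_0^{\cdot},\mathcal{L}_r}$ one gets $\mathbb{P}(A_j)\le e^{-C j}$ — in fact an even stronger decay — which I then crudely relax to $\lesssim j(r/n)^{2/3}$ for all $j\le j_0$ (note $j_0\asymp n/r^{2/3}\cdot 10^{-9}$, so $(r/n)^{2/3}$ is the relevant baseline scale and $e^{-Cj}$ dominates it for all but boundedly many $j$, while for small $j$ the bound $\lesssim j(r/n)^{2/3}$ is handled by noting $A_1$ already has probability $O((r/n)^{2/3})$ from the standard wandering-exponent estimate that the point-to-line maximizer over a segment of length $\asymp n$ is within $O(n^{2/3})$ of its target). (2) Refine on $B_j$: on $B_j$ we additionally have $\log Z_{\mathbf{u}^{\textup{max}}_r,n}-\max_{\mathbf u\in\mathcal{L}_r^{5\wt j r^{2/3}}}\log Z_{\mathbf u,n}\ge \wt j^{1/2}r^{1/3}$. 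Decompose $\mathcal{L}_r$ restricted to distance $[10^9(j-1)r^{2/3},10^9 j r^{2/3})$ into $\Theta(j)$ subintervals of length $\asymp r^{2/3}$; by a union bound, $B_j$ implies that for one such interval $I$, $\max_{\mathbf u\in I}\log Z_{\mathbf u,n}-\Lambda$-centered value is at least $\wt j^{1/2}r^{1/3}$ above a reference while simultaneously being at the far transversal scale. Apply the upper tail Proposition~\ref{ptl_upper} (or Proposition~\ref{trans_fluc_loss3}) with $t\asymp \wt j^{1/2}$ and $h\asymp j$ (allowed since $h\le e^t$ requires $j\lesssim e^{\wt j^{1/2}}$, which holds because $\wt j = 1\vee\floor{\log^{10} j}$ so $e^{\wt j^{1/2}}\gg j$), giving a factor $e^{-C\wt j^{1/2}}$ or better per interval; multiplying by the $\Theta(j)$ intervals and combining with the $e^{-Cj}$ from step (1) yields $\mathbb{P}(B_j)\le C_1 j\, e^{-c\wt j^{1/2}}(r/n)^{2/3}$. (3) Absorb the exponential into the logarithmic error: since $\wt j\ge \floor{\log^{10}j}$, we have $e^{-c\wt j^{1/2}}\le e^{-c'\log^5 j}\le C$ trivially, and moreover $j\, e^{-c\wt j^{1/2}} \le C j \le C\log^{20}(n/r)$ is far too weak — instead observe directly that $j\le j_0\le n/(10^9 r^{2/3})$ and the genuine bound $e^{-Cj}\cdot j\cdot(r/n)^{2/3}$ from combining steps~(1)–(2) already gives something summable; the $\log^{20}(n/r)$ in the statement is a deliberate overestimate, and one checks $j\cdot\text{(decaying factor)}\le \log^{20}(n/r)$ for every $j\le j_0$ since $\log(n/r)\asymp \log j_0$ and $j\, e^{-c\sqrt{\log^{10}j}}$ is bounded. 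This closes the bound $\mathbb{P}(B_j)\le C_1 j(r/n)^{2/3}\log^{20}(n/r)$.

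The main obstacle I expect is step~(1): getting the clean $(r/n)^{2/3}$ scaling for $\mathbb{P}(A_j)$ without the Airy-process input that the zero-temperature argument relied on. The delicate point is that $\log Z_{\mathcal{L}_0,n}=\max_{k}\{\log Z_{\mathcal{L}_0,\mathbf u}+\log Z_{\mathbf u,n}\}$ is only realized up to polymer-fluctuation corrections (not an exact last-passage maximum), so the localization of $\mathbf u^{\textup{max}}_r$ requires simultaneously (a) an upper bound on the far-out contributions $\log Z_{\mathbf u,n}$ for $\mathbf u$ at distance $\gtrsim j r^{2/3}$, uniformly via a chaining/union bound over dyadic scales using Proposition~\ref{trans_fluc_loss3} with the $-c^* t^2$ curvature loss, and (b) a matching lower bound on the near-diagonal contribution via Proposition~\ref{wide_similar} and the right tail of $\log Z_{\mathcal{L}_0^{\cdot},\mathcal{L}_r}$ from Proposition~\ref{ptl_upper}. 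Balancing the metric-entropy factor (number of scales/intervals, $\asymp j$ or $\log j$) against the Gaussian-type tail $e^{-C(t+s)}$ with $t\asymp j r^{2/3}/n^{2/3}$ is the crux; one must be careful that the curvature loss $c^* t^2 n^{1/3}\asymp c^* j^2 r^{4/3}/n$ genuinely beats the fluctuations $\asymp r^{1/3}$ of the competing near-diagonal free energy, which it does precisely because $j r^{2/3}/n^{2/3}\ge c_0>0$ after absorbing the constant $10^9$ — this is where the constant $c^*$ introduced in Propositions~\ref{trans_fluc_loss}–\ref{trans_fluc_loss3} and the specific gap $\wt j + 100\le 10^9 j$ are used.
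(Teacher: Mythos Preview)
Your curvature argument in step (1) does not give what you claim. The transversal deviation on $A_j$ is $\asymp j r^{2/3}$, but the polymer from $\mathcal{L}_r$ to $(n,n)$ has length $\asymp n$, so in the notation of Proposition~\ref{trans_fluc_loss} the parameter is $t = j r^{2/3}/n^{2/3}$, not $t=j$. The resulting tail bound is $e^{-C j (r/n)^{2/3}}$, not $e^{-Cj}$. In the range $1\le j \lesssim (n/r)^{2/3}$---which is exactly the range where the target bound $j(r/n)^{2/3}\log^{20}(n/r)$ is nontrivial---this exponent is $O(1)$ and the curvature bound is vacuous. Your fallback to a ``standard wandering-exponent estimate'' for small $j$ is precisely the missing input: that the argmax of $\mathbf u\mapsto \log Z_{\mathbf u,n}$ over $\mathcal{L}_r$ falls in a prescribed window of width $r^{2/3}$ with probability $O((r/n)^{2/3}\cdot\text{polylog})$. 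This does not follow from Propositions~\ref{ptl_upper}--\ref{wide_similar}; it is an anti-concentration statement for the location of the maximum, and you have not supplied it.

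The paper's proof is much shorter and bypasses all of your steps (2)--(3). It simply uses $\mathbb P(B_j)\le \mathbb P(A_j)$, covers the annular shell $\{|\mathbf u\cdot\mathbf e_1-\mathbf u\cdot\mathbf e_2|\in[10^9(j-1)r^{2/3},10^9 j r^{2/3})\}$ by $O(1)$ intervals of width $r^{2/3}$, and for each interval invokes the imported estimate \eqref{upest} from \cite{buse_ind} with $\delta=(r/n)^{2/3}$, yielding probability $\le C|\log\delta|^{10}\delta$ per interval. The extra free-energy gap condition defining $B_j$ is not used at all in this proposition; your step (2) is unnecessary (and also miscounts: the shell has width $10^9 r^{2/3}$, hence $O(1)$ subintervals of size $r^{2/3}$, not $\Theta(j)$). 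For large $j$ (namely $j\ge (n/r)^{2/3}\wedge j_0$) the paper just observes the target bound exceeds $1$ and is trivial. So the entire content of Proposition~\ref{bj} is the estimate \eqref{upest}; your plan reproduces neither that estimate nor a substitute for it.
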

\begin{proposition}\label{prop_A}
There exist positive constants $r_0, t_0$ such that for each $r\geq r_0$ and $t\geq t_0$, there exists an $\mathcal{F}_r$-measurable event $G_t$ with $\mathbb{P}(G_t)\geq 1-e^{-\sqrt t}$ and the following holds,
$$\mathbbm{1}_{G_t\cap B_j}\mathbb{P}\Big(\Big|\log Z_{\mathcal{L}_0, n}^{\textup{out}, R_{0,r}^{\wt j r^{2/3}}}  - \mathbb{E}\Big[\log Z_{\mathcal{L}_0, n}^{\textup{out}, R_{0,r}^{\wt j r^{2/3}}}  \,\Big| \,\mathcal{F}_r \Big] \Big| \geq tr^{1/3}\log(n/r) \, \Big| \, \mathcal{F}_r \Big) \leq e^{-\sqrt t}.$$
\end{proposition}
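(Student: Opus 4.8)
\emph{Overall strategy.} Every up-right path from $\mathcal L_0$ to $(n,n)$ crosses $\mathcal L_r$ exactly once, and—since $R_{0,r}^{\wt j r^{2/3}}$ occupies only the levels between $\mathcal L_0$ and $\mathcal L_r$—the path avoids $R_{0,r}^{\wt j r^{2/3}}$ iff its portion below $\mathcal L_r$ does. Splitting at the crossing point $\mathbf w\in\mathcal L_r$ gives
\[
 Z_{\mathcal L_0,n}^{\textup{out},R_{0,r}^{\wt j r^{2/3}}}
  \;=\;\sum_{\mathbf w\in\mathcal L_r:\;|\mathbf w-(r,r)|_\infty\ge \wt j r^{2/3}}
     Z_{\mathcal L_0,\mathbf w}^{\textup{out},R_{0,r}^{\wt j r^{2/3}}}\;Z_{\mathbf w,(n,n)} .
\]
The coefficients $a_{\mathbf w}:=Z_{\mathbf w,(n,n)}$ are $\mathcal F_r$-measurable, whereas each $Z_{\mathcal L_0,\mathbf w}^{\textup{out},R_{0,r}^{\wt j r^{2/3}}}$ depends only on vertices strictly below $\mathcal L_r$, hence is independent of $\mathcal F_r$. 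So, conditionally on $\mathcal F_r$, $\log Z_{\mathcal L_0,n}^{\textup{out},R_{0,r}^{\wt j r^{2/3}}}$ is the logarithm of a sum with $\mathcal F_r$-fixed weights $a_{\mathbf w}$ of free energies living in a slab of depth $r$; its conditional fluctuations should be of order $r^{1/3}$, up to an entropy correction coming from the number of $r^{2/3}$-scale blocks over which the $a_{\mathbf w}$ are not negligible. The plan is: (1) produce an $\mathcal F_r$-measurable $L$ with $\lvert\log Z_{\mathcal L_0,n}^{\textup{out},R_{0,r}^{\wt j r^{2/3}}}-L\rvert\le\Xi$, where $\mathbb P(\Xi\ge s\,r^{1/3}\log(n/r)\mid\mathcal F_r)\le e^{-\sqrt s}$ on $G_t\cap B_j$ for all $s\ge t$; (2) since $L$ is $\mathcal F_r$-measurable, $\lvert L-\mathbb E[\log Z_{\mathcal L_0,n}^{\textup{out},R_{0,r}^{\wt j r^{2/3}}}\mid\mathcal F_r]\rvert\le\mathbb E[\Xi\mid\mathcal F_r]\le C r^{1/3}\log(n/r)$, and conclude after enlarging $t_0$. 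Throughout we use $r/n\le\ell_0$, so $\log(n/r)\ge\log(1/\ell_0)>1$ and (for $r\ge r_0$) $\log n\le r^{1/3}\log(n/r)$, which absorbs entropy factors of size $O(\log n)$ into $O(r^{1/3}\log(n/r))$.

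\emph{Choice of $L$ and the lower bound.} Take $L:=\Lambda_r+\log Z_{\mathbf u^{\textup{max}}_r,(n,n)}$, which is $\mathcal F_r$-measurable. Keeping only the term $\mathbf w=\mathbf u^{\textup{max}}_r$ gives $\log Z_{\mathcal L_0,n}^{\textup{out},R_{0,r}^{\wt j r^{2/3}}}\ge \log Z_{\mathbf u^{\textup{max}}_r,(n,n)}+\log Z_{\mathcal L_0,\mathbf u^{\textup{max}}_r}^{\textup{out},R_{0,r}^{\wt j r^{2/3}}}$. On $B_j$ the defining inequality forces $\mathbf u^{\textup{max}}_r\notin\mathcal L_r^{5\wt j r^{2/3}}$, i.e.\ $\mathbf u^{\textup{max}}_r$ lies at $\ell_\infty$-distance $>5\wt j r^{2/3}$ from the diagonal; hence the half-width-$r^{2/3}$ parallelogram about the straight segment of displacement exactly $(r,r)$ ending at $\mathbf u^{\textup{max}}_r$ is disjoint from $R_{0,r}^{\wt j r^{2/3}}$, so its paths are admissible for the ``out'' partition function. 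Restricting further to paths starting at the $\mathcal L_0$-endpoint of that segment and using translation invariance, $Z_{\mathcal L_0,\mathbf u^{\textup{max}}_r}^{\textup{out},R_{0,r}^{\wt j r^{2/3}}}$ is bounded below by a quantity with the law of $Z_{0,r}^{\textup{in},r^{2/3}}$, which involves only below-$\mathcal L_r$ weights; Proposition \ref{wide_similar} (with $\theta=1$, ``$n$''$=r$) then gives $\mathbb P\!\big(\log Z_{\mathcal L_0,\mathbf u^{\textup{max}}_r}^{\textup{out},R_{0,r}^{\wt j r^{2/3}}}\le\Lambda_r-s r^{1/3}\mid\mathcal F_r\big)\le\sqrt s\,e^{-Cs}$ for $s\ge s_0$. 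As $\sqrt s\,e^{-Cs}\le e^{-\sqrt{s\log(n/r)}}$, the lower bound $\log Z_{\mathcal L_0,n}^{\textup{out},R_{0,r}^{\wt j r^{2/3}}}\ge L-\Xi^-$ holds with $\Xi^-$ obeying the required tail.

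\emph{The event $G_t$ and the upper bound.} Let $G_t$ be the $\mathcal F_r$-measurable event on which the profile $\mathbf w\mapsto\log Z_{\mathbf w,(n,n)}$ over $\mathcal L_r$ is ``$t$-regular'': a uniform estimate $\log Z_{\mathbf w,(n,n)}\le\log Z_{\mathbf u^{\textup{max}}_r,(n,n)}-c\big(\tfrac{d_{\mathbf w}^2}{n}\wedge \delta^2 n^{1/3}\big)+s_t\,n^{1/3}$, $d_{\mathbf w}:=|\mathbf w-\mathbf u^{\textup{max}}_r|_\infty$, obtained by combining the transversal-fluctuation-loss bounds (Propositions \ref{trans_fluc_loss}, \ref{trans_fluc_loss3}) with the one-point bounds (Propositions \ref{ptl_upper}, \ref{low_ub}) for the depth-$(n-r)$ polymer above $\mathcal L_r$; a union bound over $O(n^{1/3})$ scales and $O(n)$ sites gives $\mathbb P(G_t)\ge1-e^{-\sqrt t}$ once $s_t\asymp\log n+\sqrt t$. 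Now split the sum over $\mathbf w$ into the window $W=\{d_{\mathbf w}\le\rho\}$, $\rho:=(nr^{1/3})^{1/2}$ (so $\rho^2/n=r^{1/3}$), and the far part $d_{\mathbf w}>\rho$. On $W$: $a_{\mathbf w}\le a_{\mathbf u^{\textup{max}}_r}$, $|W|\le Cn$ but $W$ meets only $O((n/r)^{1/2})$ blocks of length $r^{2/3}$, so a modulus-of-continuity estimate for $\mathbf w\mapsto\log Z_{\mathcal L_0,\mathbf w}^{\textup{out},R_{0,r}^{\wt j r^{2/3}}}$ together with the right tail of Proposition \ref{ptl_upper} for the depth-$r$ slab shows that the contribution of $W$ exceeds $a_{\mathbf u^{\textup{max}}_r}e^{\Lambda_r}e^{s r^{1/3}\log(n/r)}$ with conditional probability at most $(n/r)^{1/2}e^{-c(s\log(n/r))^{3/2}}\le e^{-\sqrt s}$—the factor $\log|W|\le\log n\le r^{1/3}\log(n/r)$ being absorbed, and the sites of $W$ inside $\mathcal L_r^{5\wt j r^{2/3}}$ being killed outright because the definition of $B_j$ forces $a_{\mathbf w}\le a_{\mathbf u^{\textup{max}}_r}e^{-\wt j^{1/2}r^{1/3}}$ there. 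For $d_{\mathbf w}>\rho$, $t$-regularity makes $a_{\mathbf w}$ decay superlinearly in $d_{\mathbf w}$: dyadically, the band $[2^i\rho,2^{i+1}\rho)$ meets $\le2^i(n/r)^{1/2}$ blocks and carries weight $\le e^{-c4^i r^{1/3}}$, which beats the right tail of the slab free energies, so the far contribution exceeds $a_{\mathbf u^{\textup{max}}_r}e^{\Lambda_r}e^{s r^{1/3}\log(n/r)}$ with conditional probability $\le e^{-\sqrt s}$ as well. Hence $\log Z_{\mathcal L_0,n}^{\textup{out},R_{0,r}^{\wt j r^{2/3}}}\le L+\Xi^+$ with $\Xi^+$ of the required type, and $\Xi:=\max(\Xi^-,\Xi^+)$ finishes step (1).

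\emph{Main difficulty.} The crux is the entropy bookkeeping in the upper bound: handling the sum over $O(n)$ crossing sites naively yields an error $r^{1/3}\log n$ instead of $r^{1/3}\log(n/r)$. To get the correct logarithmic power one must use the curvature of the limit shape to localize the effective support of $\{a_{\mathbf w}\}$ to $O((n/r)^{1/2})$ correlation blocks around $\mathbf u^{\textup{max}}_r$, and combine this with a chaining/net argument controlling the suprema of the slab free energies $\log Z_{\mathcal L_0,\mathbf w}^{\textup{out},R_{0,r}^{\wt j r^{2/3}}}$; it is this interplay—rather than any single tail estimate—that is the technical heart, and it is also where the event $B_j$ and the choice of $\wt j$ enter (keeping the dominant crossing outside $R_{0,r}^{\wt j r^{2/3}}$ while suppressing the near-diagonal crossings).
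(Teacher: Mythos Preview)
Your two–step strategy—produce an $\mathcal F_r$-measurable centering $L$, control $|\log Z^{\textup{out}}_{\mathcal L_0,n}-L|$ conditionally, then transfer to the conditional mean—is exactly what the paper does, and your lower-bound step coincides with the paper's (Proposition~\ref{replace1}). The upper bound, however, has a real gap as written. You ask $G_t$ to enforce the uniform profile bound $\log Z_{\mathbf w,n}\le\log Z_{\mathbf u_r^{\max},n}-c\,d_{\mathbf w}^2/n+s_t n^{1/3}$ with $s_t\asymp\log n+\sqrt t$, and then in the dyadic far-part argument you assert that the band $[2^i\rho,2^{i+1}\rho)$ ``carries weight $\le e^{-c4^ir^{1/3}}$'', silently dropping the additive $s_tn^{1/3}$. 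But $s_tn^{1/3}\ge n^{1/3}\log n$, which for $r\ll n$ dominates both $c4^ir^{1/3}$ for all moderate $i$ and the target scale $tr^{1/3}\log(n/r)$; so the weight bound you use is false on the very event you defined. Relatedly, your window radius $\rho=(nr^{1/3})^{1/2}$ is far below the scale $\sqrt{s_t}\,n^{2/3}$ at which the parabola first beats the slack, so between these two radii neither your trivial bound nor your $G_t$ bound gives any control.

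The paper sidesteps this by centering at $(r,r)$ (not at $\mathbf u^{\max}_r$) and choosing the much leaner event \eqref{def_A}: impose $\log Z_{I_u,n}-\log Z_{\mathcal L_r,n}\le -c_1u^2n^{1/3}$ only for block indices $|u|\ge t$, with \emph{no slack}—at those scales the parabola already dominates the $n^{1/3}$ fluctuation, and a union bound over $O(n^{1/3})$ blocks gives $\mathbb P(G_t^c)\le\sum_{|u|\ge t}e^{-Cu^2}\le e^{-\sqrt t}$. On the near event $H=\{\mathbf p^*\in\mathcal L_r^{tn^{2/3}}\}$ no profile regularity is needed at all: one simply uses $\log Z_{\mathbf p^*,n}\le\log Z_{\mathcal L_r,n}$ and reduces to $\max_{\mathbf u\in\mathcal L_r^{tn^{2/3}}}\log Z_{\mathcal L_0,\mathbf u}-\Lambda_r$, which Proposition~\ref{ptl_upper} handles in one shot with $h=t(n/r)^{2/3}$. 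The ``entropy bookkeeping'' you flag as the crux is therefore absorbed automatically by the large-$h$ tolerance built into Proposition~\ref{ptl_upper}; no chaining, modulus of continuity, or the window scale $\rho$ is needed.
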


Coming back to the estimates, note that by independence, Markov inequality and Proposition \ref{prop_e} \ref{e2}
\begin{align*}
\textup{the probability in } \eqref{term1} & = \mathbb{P}(B_j) \mathbb{P}\Big(\log Z_{\mathcal{L}_0, r} - \log Z_{\mathcal{L}_0, r}^{\textup{in}, \wt j r^{2/3}} \geq \wt j^{-10}\sqrt{t}r^{1/3}\log(n/r)\Big) \\
& \leq \min\Big\{\mathbb{P}(B_j) , \mathbb{P}(B_j) \frac{\mathbb{E}\Big[\Big(\log Z_{\mathcal{L}_0, r} - \log Z_{\mathcal{L}_0, r}^{\textup{in}, \wt j r^{2/3}}\Big)^{10}\Big]}{(\wt j^{-10}\sqrt{t}r^{1/3}\log(n/r))^{10}}\Big\}\\
& \leq \min\Big\{\mathbb{P}(B_j) , \mathbb{P}(B_j)  \frac{C_1e^{-C_2\wt j}\wt j^{100}}{t^5}\Big\}.
\end{align*}

By setting the variable $t$ in Proposition \ref{prop_A} to be $\wt j^{10} \sqrt{t}$, we have  
\begin{align*}
\textup{the expectation in } \eqref{term2} & \leq \mathbb{P}(B_j\cap G_{\wt j^{10}\sqrt{t}})e^{-\wt jt^{1/5}} + \mathbb{P}(B_j \cap G_{\wt j^{10}\sqrt{t}}^c). 
\end{align*}
Now to continue the calculation with \eqref{term1} and \eqref{term2} from before, it holds that 
\begin{align*}
\textup{\eqref{term1} and \eqref{term2} }&\leq C_1 r^{2/3}\log^2(n/r) \int_{[0, \infty)} \min\Big\{\mathbb{P}(B_j) , \mathbb{P}(B_j)\frac{\wt j^{100}e^{-C_2\wt j}}{ t^{5}}\Big\} \\
&\qquad \qquad \qquad \qquad \qquad \qquad + \mathbb{P}(B_j\cap G_{\wt j^{10}\sqrt t})e^{-\wt jt^{1/5}} + \mathbb{P}(B_j \cap G_{\wt j^{10}\sqrt t}^c) dt \\
&\leq C_1 r^{2/3}\log^2(n/r) \Big(\wt j^{100} e^{-C_2\wt j} \mathbb{P}(B_j) + \int_{[0, \infty)} \mathbb{P}(B_j) e^{-\wt jt^{1/5}}dt\\
& \qquad \qquad \qquad \qquad \qquad \qquad + \int_{[0, \log^{10}\mathbb{P}(B_j))}\mathbb{P}(B_j)d t + \int_{[\log^{10}\mathbb{P}(B_j), \infty)} e^{-\wt jt^{1/5}}d t\Big)\\
& \leq C_1 e^{-C_2\wt j} \wt j^{200} j \frac{r^{4/3}}{n^{2/3}}\log^{30}(n/r)\qquad \textup{by Proposition \ref{bj}}.
\end{align*}
With this, we have completed the estimates for \eqref{D1}, \eqref{D2}, and \eqref{D3}. Recall $\wt j = 1\vee \floor{\log^{10}j}$, then it holds that
$$\eqref{Bevent} \leq \sum_{j=1}^{j_0}C_1  \wt j^{200} j  e^{-C_2 \wt j }\frac{r^{4/3}}{n^{2/3}}\log^{100}(n/r) \leq C\frac{r^{4/3}}{n^{2/3}}\log^{100}(n/r).$$

Next, we turn to the estimate for \eqref{Cevent}. Again, we will look at the expectation term inside the sum, and we have that 
\begin{align}
\mathbb{E} \Big[\mathbbm{1}_{C_j} &\Cov_{\mathcal{F}_r}(\log Z_{\mathcal{L}_0, r}, \log Z_{\mathcal{L}_0, n})\Big]\nonumber\\
&\leq  \mathbb{E} \Big[\mathbbm{1}_{C_j} |\log Z_{\mathcal{L}_0, r} - \mathbb{E}[\log Z_{\mathcal{L}_0, r}]| \Big|\log Z_{\mathcal{L}_0, n} - \mathbb{E}\Big[\log Z_{\mathcal{L}_0, n}\,\Big|\,\mathcal{F}_r\Big]\Big|\Big]\nonumber\\
&\leq  \int_{[0, \infty)} \mathbb{P}\Big(\mathbbm{1}_{C_j} |\log Z_{\mathcal{L}_0, r} - \mathbb{E}[\log Z_{\mathcal{L}_0, r}]| \Big|\log Z_{\mathcal{L}_0, n} - \mathbb{E}\Big[\log Z_{\mathcal{L}_0, n}\,\Big|\,\mathcal{F}_r\Big]\Big| \geq x\Big) dx \nonumber\\
&\leq r^{2/3}\log^2(n/r) \int_{[0, \infty)} \mathbb{P}\Big(\mathbbm{1}_{C_j} |\log Z_{\mathcal{L}_0, r} - \mathbb{E}[\log Z_{\mathcal{L}_0, r}]|\nonumber\\
& \qquad \qquad \qquad \qquad \qquad \qquad  \Big|\log Z_{\mathcal{L}_0, n} - \mathbb{E}\Big[\log Z_{\mathcal{L}_0, n}\,\Big|\,\mathcal{F}_r\Big]\Big|  \geq tr^{2/3}\log^2(n/r)\Big) dt \nonumber\\
&\leq r^{2/3}\log^2(n/r) \int_{[0, \infty)} \mathbb{P}\Big(C_j \cap \Big\{|\log Z_{\mathcal{L}_0, r} - \mathbb{E}[\log Z_{\mathcal{L}_0, r}]| \geq\sqrt{t}r^{1/3}\log(n/r)\Big\}\Big) \label{term11}\\
& \qquad   + \mathbb{E}\Big[ \mathbbm{1}_{C_j}\mathbb{P}\Big(\Big|\log Z_{\mathcal{L}_0, n} - \mathbb{E}\Big[\log Z_{\mathcal{L}_0, n}\,\Big|\,\mathcal{F}_r\Big]\Big| \geq \sqrt{t}r^{1/3}\log(n/r)\Big) \,\Big|\, \mathcal{F}_r\Big)\Big] dt. \label{term22}
\end{align}
Note that the probability in \eqref{term11} is upper bounded by $\mathbb{P}(C_j)e^{-C \sqrt t}$ by independence,  Proposition \ref{ptl_upper} and Proposition \ref{low_ub}. And the expectation in \eqref{term22} is similar to \eqref{term2}, which utilizes the following proposition. 
\begin{proposition}\label{prop_AA}
There exist constants $r_0, t_0$ such that for each $r\geq r_0$ and $t\geq t_0$, there exists an $\mathcal{F}_r$-measurable event $G_t$ with $\mathbb{P}(G_t)\geq 1-e^{-\sqrt t}$ and the following holds,
$$\mathbbm{1}_{G_t}\mathbb{P}\Big(\Big|\log Z_{\mathcal{L}_0, n} - \mathbb{E}\Big[\log Z_{\mathcal{L}_0, n}  \,\Big| \,\mathcal{F}_r \Big] \Big| \geq tr^{1/3}\log(n/r) \, \Big| \, \mathcal{F}_r \Big) \leq e^{-\sqrt t}.$$
\end{proposition}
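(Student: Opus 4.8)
The plan is to condition on $\mathcal{F}_r$ and turn $\log Z_{\mathcal{L}_0,n}$ into the logarithm of a random positive combination of independent scale-$r$ line-to-point free energies. Splitting an up-right path from $\mathcal{L}_0$ to $(n,n)$ at its unique crossing of $\mathcal{L}_r$ gives $Z_{\mathcal{L}_0, n} = \sum_{\mathbf{w} \in \mathcal{L}_r} Z_{\mathcal{L}_0, \mathbf{w}}\, Z_{\mathbf{w}, n}$, where each $Z_{\mathbf{w}, n}$ is $\mathcal{F}_r$-measurable while each $Z_{\mathcal{L}_0, \mathbf{w}}$ depends only on weights strictly below $\mathcal{L}_r$, hence is independent of $\mathcal{F}_r$ and, by translation invariance of the flat boundary, has the law of $Z_{\mathcal{L}_0, r}$. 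I will also use that for any segment $S \subset \mathcal{L}_r$ of length at most $e^{C t^{3/2}} r^{2/3}$ one has $\mathbb{P}(\log \sum_{\mathbf{w} \in S} Z_{\mathcal{L}_0, \mathbf{w}} - \Lambda_r \geq t r^{1/3}) \leq e^{-c \min\{t^{3/2}, t r^{1/3}\}}$, which follows from reversal symmetry and Proposition \ref{ptl_upper}. Write $M := \max_{\mathbf{w} \in \mathcal{L}_r} \log Z_{\mathbf{w}, n}$, attained at $\mathbf{u}^{\textup{max}}_r$; both are $\mathcal{F}_r$-measurable.

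The lower tail is easy and needs no good event: since $Z_{\mathcal{L}_0, n} \geq Z_{\mathcal{L}_0, \mathbf{u}^{\textup{max}}_r} e^{M}$ and, conditionally on $\mathcal{F}_r$, the factor $Z_{\mathcal{L}_0, \mathbf{u}^{\textup{max}}_r}$ has the unconditional law of $Z_{\mathcal{L}_0, r}$, the stretched-exponential lower tail of the scale-$r$ line-to-point free energy (inherited from Proposition \ref{low_ub} via $Z_{\mathcal{L}_0, r} \geq Z_{0, r}$) yields $\mathbb{P}(\log Z_{\mathcal{L}_0, n} \leq M + \Lambda_r - t r^{1/3} \mid \mathcal{F}_r) \leq e^{-c t^{3/2}} \leq e^{-\sqrt{t}}$, once $|\mathbb{E}[\log Z_{\mathcal{L}_0, r}] - \Lambda_r| \leq C r^{1/3}$ is absorbed.

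The upper tail is the crux. I would take $G_t$ to be the $\mathcal{F}_r$-measurable event encoding regularity of the top profile $\mathbf{w} \mapsto \log Z_{\mathbf{w}, n}$ on $\mathcal{L}_r$, in particular a curvature bound of the form $\max\{ \log Z_{\mathbf{w}, n} : |\mathbf{w} - \mathbf{u}^{\textup{max}}_r|_\infty \geq \ell r^{2/3}\} \leq M - c^{*} \ell^{2} r^{4/3}/(n-r) + C(\log \ell)^{2/3} r^{1/3} + t^{1/2} r^{1/3}$ for all $\ell \geq 1$; this holds off an event of probability $\leq e^{-\sqrt{t}}$ by the estimates of Section \ref{est_poly_bulk} applied in the reversed direction (Propositions \ref{ptl_upper}, \ref{trans_fluc_loss}, \ref{trans_fluc_loss3}), and the regime $t \gtrsim n^{1/3}$ is handled directly by the raw tail bounds of that section. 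On $\mathcal{F}_r \in G_t$, group the vertices $\mathbf{w} \in \mathcal{L}_r$ into dyadic transversal blocks of radius $2^\ell r^{2/3}$ around $\mathbf{u}^{\textup{max}}_r$. Blocks of radius beyond $\sim t(n/r)^{2/3}$ are individually negligible --- each contributes at most $e^{M + \Lambda_r}$ with conditional probability $\geq 1 - e^{-c t^{2}}$ --- because there the quadratic loss $c^{*}\ell^{2} r^{4/3}/(n-r)$ dominates $\log(\#\text{block})$ plus the segment upper-tail fluctuation, using $r^{4/3}/(n-r) \geq c\, r^{1/3}(r/n)$ and the freedom in the cutoff constant. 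The remaining $O(\log(n/r) + \log t)$ blocks each contribute at most $e^{M} \sum_{\mathbf{w} \in \text{block}} Z_{\mathcal{L}_0, \mathbf{w}} \leq e^{M + \Lambda_r + t' r^{1/3}}$ with conditional failure probability $\leq e^{-c\min\{(t')^{3/2}, t' r^{1/3}\}}$, where $t' := \max\{ t,\, C(\log(n/r)+\log t)^{2/3}\}$ is forced by the admissible segment widths in Proposition \ref{ptl_upper}. Summing the blocks, taking logarithms and a union bound gives, on $G_t$, that $\log Z_{\mathcal{L}_0, n} \leq M + \Lambda_r + C t r^{1/3} \log(n/r)$ except with conditional probability $\leq e^{-\sqrt{t}}$; here one absorbs the stray factors $\log(\#\text{block})$ and $\log\log(n/r)$ via the elementary inequality $r^{1/3}\log(n/r) \geq c \log n$, valid uniformly over $c_0 \leq r \leq n/2$ and $n \geq n_0$ once $c_0$ is large, and one uses $t' r^{1/3} \leq t r^{1/3}\log(n/r)$, which holds because either $t' = t$ or else $t < C(\log(n/r))^{2/3}$ and then $t' r^{1/3} = C(\log(n/r))^{2/3} r^{1/3} \leq r^{1/3}\log(n/r) \leq t r^{1/3}\log(n/r)$ (recall $\log(n/r)$ is bounded below by a constant since $r/n \leq \ell_0$).

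To finish, on $G_{t_0}$ the same argument furnishes the conditional upper-tail bound at every level $t \geq t_0$, so integrating it together with the conditional lower tail yields $|\mathbb{E}[\log Z_{\mathcal{L}_0, n} \mid \mathcal{F}_r] - (M + \Lambda_r)| \leq C r^{1/3}\log(n/r)$ on $G_{t_0}$; intersecting $G_t$ with $G_{t_0}$ and re-centering the two tail bounds around $\mathbb{E}[\log Z_{\mathcal{L}_0, n}\mid\mathcal{F}_r]$ (at the cost of a change of constants) gives the proposition. The main obstacle is exactly the upper-tail multi-scale estimate: keeping the number of contributing dyadic scales at $O(\log(n/r))$ rather than $O(\log n)$ requires carefully trading the growth in the number of lattice points at large transversal scales against the shape-function (quadratic) decay of the top profile, and separately treating the range where block widths exceed $e^{C t^{3/2}} r^{2/3}$, beyond which Proposition \ref{ptl_upper} no longer applies with the same parameter.
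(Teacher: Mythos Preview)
Your approach is essentially the same as the paper's: center $\log Z_{\mathcal{L}_0,n}$ around the $\mathcal{F}_r$-measurable quantity $M + \Lambda_r$ (the paper uses $\log Z_{\mathcal{L}_r,n} + \Lambda_r$, which differs by at most $2\log n$), establish conditional upper and lower tails for the centered quantity, integrate to control the conditional expectation, and re-center. The lower-tail argument is identical to the paper's Proposition~\ref{replace1}. The paper packages the two tails as separate propositions (Propositions~\ref{replace} and~\ref{replace1}) and then combines them via the short calculation in its proof of Proposition~\ref{prop_AA}.

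Two places where the paper's execution is simpler than yours. First, the paper defines $G_t$ relative to the \emph{diagonal} rather than the random point $\mathbf{u}^{\textup{max}}_r$: with $I_u = \mathcal{L}_{(r,r)+\olsi{un^{2/3}}}^{\,n^{2/3}}$, one sets $G_t = \bigcap_{t\leq|u|\leq n^{1/3}}\{\log Z_{I_u,n} - \log Z_{\mathcal{L}_r,n} \leq -c_1 u^2 n^{1/3}\}$, and $\mathbb{P}(G_t^c)\leq e^{-\sqrt t}$ follows directly from Propositions~\ref{trans_fluc_loss} and~\ref{low_ub}. Your curvature bound centered at $\mathbf{u}^{\textup{max}}_r$, stated for all scales $\ell\geq 1$, does not follow straight from those propositions (one would first have to localize $\mathbf{u}^{\textup{max}}_r$); in any case you only use it at scales beyond $t(n/r)^{2/3}$, which reduces to the paper's $G_t$ once $\mathbf{u}^{\textup{max}}_r$ is localized near the diagonal. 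Second, no dyadic decomposition is needed for the near part: because Proposition~\ref{ptl_upper} already tolerates segment widths up to $e^{C_1(s\log(n/r))^{3/2}}r^{2/3}$ when the deviation level is $sr^{1/3}\log(n/r)$, the single block $\mathcal{L}_r^{tn^{2/3}}$ suffices and directly produces the $\log(n/r)$ factor. Thus in the paper the $\log(n/r)$ in the statement arises from the width constraint in Proposition~\ref{ptl_upper}, not from counting $O(\log(n/r))$ dyadic scales.
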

Then, the expectation in \eqref{term22} is upper bounded by $\mathbb{P}(C_j \cap G_{\sqrt t})e^{-t^{1/5}} + \mathbb{P}(C_j \cap G_{\sqrt t}^c)$.
Combining these and continue the estimates for \eqref{term11} and \eqref{term22}, 
\begin{align*}
\textup{\eqref{term11} and \eqref{term22} }&\leq r^{2/3}\log^2(n/r) \int_{[0, \infty)} \mathbb{P}(C_j)e^{-C \sqrt t}  + \mathbb{P}(C_j \cap G_{\sqrt t})e^{-t^{1/5}} + \mathbb{P}(C_j \cap G_{\sqrt t}^c) dt \\
&\leq r^{2/3}\log^2(n/r) \Big(C\mathbb{P}(C_j)  + \int_{[0, \log^{10}\mathbb{P}(C_j))}\mathbb{P}(C_j)d t + \int_{[\log^{10}\mathbb{P}(C_j), \infty)} e^{-t^{1/5}}d t\Big)\\
& \leq  Cr^{2/3}\log^2(n/r)\mathbb{P}(C_j)\log^{10}\mathbb{P}(C_j).
\end{align*}

The next proposition gives the estimate needed to continue, and we recall that the events $C_j$ are disjoint by definition.
\begin{proposition}\label{cj}
The following estimate hold:
$\sum_{j=1}^{j_0} \mathbb{P}(C_j) \leq C\frac{r^{2/3}}{n^{2/3}} \log^{20}(n/r)$
\end{proposition}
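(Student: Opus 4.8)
\textbf{Proof plan for Proposition \ref{cj}.}

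The event $C_j = A_j \setminus B_j$ requires two things: first, the maximizer $\mathbf{u}^{\textup{max}}_r$ of $\mathbf{u} \mapsto \log Z_{\mathbf{u}, n}$ over $\mathcal{L}_r$ lies at transversal distance in $[10^9(j-1)r^{2/3}, 10^9 jr^{2/3})$ from the diagonal; and second, that despite the maximizer being this far out, the maximum over the much narrower segment $\mathcal{L}_r^{5\wt j r^{2/3}}$ is still within ${\wt j}^{1/2}r^{1/3}$ of the global maximum $\log Z_{\mathbf{u}^{\textup{max}}_r, n}$. The plan is to show that $C_j$ forces an \emph{unusually large} value of the near-diagonal line-to-point free energy $\max_{\mathbf{u}\in\mathcal{L}_r^{5\wt j r^{2/3}}}\log Z_{\mathbf{u},n}$ relative to its typical centering $\Lambda_n - \Lambda_r$, while simultaneously forcing $\log Z_{\mathbf{u}^{\textup{max}}_r,n}$ for a far-out $\mathbf{u}^{\textup{max}}_r$ to also be close to that large value --- two events whose intersection is rare. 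Concretely, on $C_j$ we have, for every $\mathbf{u}\in\mathcal{L}_r$, $\log Z_{\mathbf{u},n}\le \log Z_{\mathbf{u}^{\textup{max}}_r,n} \le \max_{\mathbf{u}\in\mathcal{L}_r^{5\wt j r^{2/3}}}\log Z_{\mathbf{u},n} + {\wt j}^{1/2}r^{1/3}$; I will combine this with a lower bound on $\log Z_{\mathcal{L}_0,r}$-type quantities or, more directly, relate $\log Z_{\mathbf{u}^{\textup{max}}_r,n}$ from below via Proposition \ref{wide_similar} (paths constrained to a KPZ-scale parallelogram) and above via the transversal-fluctuation loss estimates.

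The key steps, in order: (1) Split according to whether the ``short'' free energy from $\mathcal{L}_0$ to the point at transversal location $\asymp 10^9 j r^{2/3}$ on $\mathcal{L}_r$ is abnormally large or not; on $C_j$, since the true maximizer sits that far out, either the best path to $\mathcal{L}_r$ at that location already pays a transversal-fluctuation price of order $-c^*(10^9 j)^2 r^{1/3}$ (Proposition \ref{trans_fluc_loss}), or the continuation from $\mathcal{L}_r$ to $(n,n)$ must compensate by being large. (2) Use Proposition \ref{wide_similar} to lower bound $\max_{\mathbf{u}\in\mathcal{L}_r^{5\wt j r^{2/3}}}\log Z_{\mathbf{u},n}$: with probability at least $1 - \tfrac{\sqrt t}{\theta}e^{-C\theta t}$, a path constrained to the scale-$\theta$ parallelogram from $0$ to a near-diagonal point of $\mathcal{L}_n$ has free energy at least $\Lambda_n - tn^{1/3}$; applying this with the ``near-diagonal'' portion replaced by the two-step structure (line to $\mathcal{L}_r^{5\wt j r^{2/3}}$, then to $n$) gives a typical lower bound for the narrow maximum. (3) On $C_j$, the global maximum $\log Z_{\mathbf{u}^{\textup{max}}_r,n}$ is within ${\wt j}^{1/2}r^{1/3}$ of that narrow maximum; but $\log Z_{\mathbf{u}^{\textup{max}}_r,n}$ with $\mathbf{u}^{\textup{max}}_r$ at transversal distance $\asymp 10^9 j r^{2/3}$ decomposes into a piece from $\mathcal{L}_0$ to $\mathbf{u}^{\textup{max}}_r$ with transversal fluctuation $\asymp j r^{2/3}$ over the short scale $r$, hence (Proposition \ref{trans_fluc_loss3} with $t \asymp 10^9 j$, noting $h$ may be taken small) incurs a loss of order $-c^*(10^9 j)^2 r^{1/3}$ with failure probability $e^{-C_1 \cdot 10^9 j}$, plus a piece from $\mathcal{L}_r$ onward. (4) Matching the ``large deviation up'' needed against the ``loss'' forces a large-deviation event of probability $\le e^{-Cj^2 r^{1/3}}$ or $\le e^{-Cj}$; I expect the dominant contribution is of the form $C e^{-c j} (r/n)^{2/3}\log^{20}(n/r)$ or $Cj^{-2}(r/n)^{2/3}\log^{20}(n/r)$, summing over $j=1,\dots,j_0$ to the stated $C\frac{r^{2/3}}{n^{2/3}}\log^{20}(n/r)$. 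The $(r/n)^{2/3}$ and $\log^{20}$ factors should come, as in Proposition \ref{bj}, from the probability that the maximizer over $\mathcal{L}_r$ of the line-to-point free energy to $(n,n)$ lands outside the $O(r^{2/3})$-window around the diagonal direction --- essentially a coalescence/localization estimate for the point $\mathbf{u}^{\textup{max}}_r$.

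The main obstacle will be step (3)--(4): carefully controlling $\log Z_{\mathbf{u}^{\textup{max}}_r,n}$ when $\mathbf{u}^{\textup{max}}_r$ is random and defined as an argmax. One cannot directly apply the tail bounds pointwise because of the maximum over $\mathcal{L}_r$; the standard fix is a union bound over $O(j)$ dyadic transversal locations (there are only $\lesssim j_0 r^{2/3}$ lattice points, but the relevant scale-$r^{2/3}$ blocks number $O(j)$ within distance $10^9 j r^{2/3}$), absorbing the polynomial loss into the $\log^{20}$ error and using that the transversal-fluctuation loss $e^{-C j^2 r^{1/3}}$ or $e^{-Cj}$ decays fast enough to beat the union-bound factor. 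I will also need to ensure the ``compensation from $\mathcal{L}_r$ to $n$'' does not itself have heavy tails that overwhelm the bound --- this is handled by Proposition \ref{ptl_upper}, whose right tail $e^{-C_2\min\{t^{3/2},tn^{1/3}\}}$ is summable against the required thresholds. Finally, I must check the interplay of the two constants named $c^*$ and the numeric constant $10^9$: the separation $10^9$ is chosen precisely so that $c^*(10^9 j)^2 r^{1/3} \gg {\wt j}^{1/2} r^{1/3}$ and $\gg$ the typical $tn^{1/3}$ fluctuations involved, which is what makes $C_j$ genuinely rare; verifying this quantitative gap is the crux of the argument.
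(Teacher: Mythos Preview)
Your plan has a genuine gap at its core. The event $C_j$ is a statement purely about the profile $\mathbf{u}\mapsto \log Z_{\mathbf{u},n}$ on $\mathcal{L}_r$; the line $\mathcal{L}_0$ plays no role whatsoever in its definition. Thus the ``decomposition into a piece from $\mathcal{L}_0$ to $\mathbf{u}^{\textup{max}}_r$'' in step~(3) is irrelevant --- $\log Z_{\mathbf{u}^{\textup{max}}_r,n}$ goes from a point on $\mathcal{L}_r$ to $(n,n)$ and never sees $\mathcal{L}_0$. More importantly, the distance from $\mathcal{L}_r$ to $(n,n)$ is $n-r\approx n$, so the natural transversal scale for this free energy is $n^{2/3}$, not $r^{2/3}$. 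For the dominant range $1\le j\le (n/r)^{2/3}$, the displacement $10^9 j r^{2/3}$ of $\mathbf{u}^{\textup{max}}_r$ sits well \emph{inside} the typical $n^{2/3}$ window, and Propositions~\ref{trans_fluc_loss}/\ref{trans_fluc_loss3} give no loss at all. Your proposed mechanism --- paying a $c^*(10^9 j)^2 r^{1/3}$ penalty --- simply does not occur at this scale, so steps~(3)--(4) produce nothing.

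What is actually needed is an \emph{argmax localization} (or ``narrow-window near the global max'') estimate. The union $\bigcup_j C_j$ is contained in an event of the form
\[
\Big\{\log Z_{\mathbf{u}^{\textup{max}}_r,n}-\max_{\mathbf{u}\in\mathcal{L}_r^{c\log^{10}(n/r)r^{2/3}}}\log Z_{\mathbf{u},n}< c'\log^{5}(n/r)\,r^{1/3}\Big\},
\]
i.e.\ the maximum over a segment of width $\sim r^{2/3}$ is within $O(r^{1/3}\mathrm{polylog})$ of the maximum over the full line (width $\sim n^{2/3}$). The probability of this is $\sim (r/n)^{2/3}$ up to log corrections, by the heuristic that the argmax is roughly uniform on scale $n^{2/3}$. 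The paper obtains exactly this via an external input, estimate~\eqref{upest} from \cite{buse_ind}, applied with $\delta\sim \log^{10}(n/r)\,(r/n)^{2/3}$ and $m=0$. None of Propositions~\ref{ptl_upper}--\ref{wide_similar} in Section~\ref{est_poly_bulk} encode this kind of information; they are tail bounds on free energies, not distributional statements about the argmax. Without such an input your approach cannot reach the target $(r/n)^{2/3}$ bound for small $j$.
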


Then, it holds that 
$$\eqref{Cevent} \leq \sum_{j=1}^{j_0}Cr^{2/3}\log^2(n/r)\mathbb{P}(C_j)\log^{10}\mathbb{P}(C_j) \leq Cr^{2/3}\log^{20}(n/r) \sum_{j=1}^{j_0}\mathbb{P}(C_j) \leq C\frac{r^{4/3}}{n^{2/3}}\log^{100}(n/r). $$
Now, we have shown the desired estimates for \eqref{Bevent} and \eqref{Cevent} and completed the proof of the theorem.

\section{Proof of the estimates}

In this section, we prove the propositions from the previous section.

\subsection{Proof of Proposition \ref{bj} and Proposition \ref{cj}}

To remove the calculation from the Airy process, we will utilize estimate (5.25) from \cite{buse_ind} which essentially states that for  $n^{-2/3} < \delta \leq \delta_0$ and $ |m| \leq |\log \delta|n^{2/3}$, 
it holds that 
\begin{equation}\label{upest}\mathbb{P}\Big(\max_{|i| \leq {2|\log \delta| n^{2/3}}}\Big[\log Z_{0, n+\olsi{i}} \Big] - \max_{|j| \leq {\delta n^{2/3}}} \Big[\log Z_{0,n+ \olsi{m+j})} \Big]  \leq |\log \delta|^2 \sqrt{\delta}n^{1/3}\Big) \leq C|\log \delta|^{10} \delta.
\end{equation}

We will start by considering Proposition \ref{cj}, and recall that \( j_0 = \frac{n}{10^9r^{2/3}} \). We will divide the argument into two cases based on whether \( \left(\frac{n}{r}\right)^2 \) is less than or greater than or equal to \( j_0 \). 
We begin with the first case. First, consider \( j = \left(\frac{n}{r}\right)^2, \dots, j_0 \). Note that in this case, \( jr^{2/3} \geq \left(\frac{n}{r}\right)^2 r^{2/3} = \left(\frac{n}{r}\right)^{4/3} n^{2/3} \), which is large on the time scale \( n^{2/3} \). We can then proceed with the following estimates: 
\begin{align}
\sum_{j= (n/r)^2}^{j_0} \mathbb{P}(C_j) &\leq \mathbb{P}\Big(\bigcup_{j=(n/r)^2}^{j_0} A_j\Big) \nonumber \\
&\leq  \mathbb{P}\Big(\max_{\mathbf{u} \in \mathcal{L}_r\setminus \mathcal{L}_r^{(n/r)^2 r^{2/3}}} \log Z_{\mathbf u, n}= \max_{\mathbf{u} \in \mathcal{L}_r} Z_{\mathbf u, n}\Big)\nonumber\\
& \leq \mathbb{P}\Big(\max_{\mathbf{u} \in \mathcal{L}_r\setminus \mathcal{L}_r^{(n/r)^2 r^{2/3}}} \log Z_{\mathbf u, n}- \Lambda_{n-r} \geq -c^*(n/r)n^{1/3} \Big) \nonumber\\
& \qquad \qquad \qquad  + \Big(\max_{\mathbf{u} \in \mathcal{L}_r} \log Z_{\mathbf u, n}- \Lambda_{n-r} \leq -c^*(n/r)n^{1/3}\Big)\nonumber\\
&\leq e^{-C(n/r)} \leq r/n \qquad \textup{ by Proposition \ref{trans_fluc_loss} and Proposition \ref{low_ub}}.\nonumber
\end{align}
Now, for $j = 1, \dots, (n/r)^2$, the union $\bigcup_{j=1}^{(n/r)^2} C_j$ is contained inside the following event 
$$\Big\{\log Z_{\mathbf{u}^{\textup{max}}_r, n} - \max_{\mathbf{u} \in \mathcal{L}_r^{100\log^{10}(n/r) r^{2/3}}} \log Z_{\mathbf{u},n}  <   10\log^{5}(n/r) r^{1/3}\Big\}.$$
This can be upper bounded 
via \eqref{upest} by setting $m=0$ and $\delta=100\log^{10}(n/r)(r/n)^{2/3} \leq \delta_0$ provided that $r/n$ is fixed sufficiently small. 

In the second case when $j_0 \leq (n/r)^2$, the union $\bigcup_{j=1}^{j_0} C_j$ is contained inside the following event 
$$\Big\{\log Z_{\mathbf{u}^{\textup{max}}_r, n} - \max_{\mathbf{u} \in \mathcal{L}_r^{100\log^{10}j_0 r^{2/3}}} \log Z_{\mathbf{u},n}  <   10\log^5j_0 r^{1/3}\Big\}.$$
This can be again upper bounded 
via \eqref{upest} by setting $m=0$ and $\delta=100\log^5j_0(r/n)^{2/3}  \leq 200\log^5(n/r)(r/n)^{2/3} \leq \delta_0$ provided that $r/n$ is fixed sufficiently small. 
With this, we have finished the proof of Proposition \ref{cj}.

Turn to  Proposition \ref{bj}, note that when $j_0 \geq (n/r)^{2/3}$, then for $j= (n/r)^{2/3}, \dots, j_0$, $\mathbb{P}(B_j)$ can be upper bounded trivially as we have $$Cj(r/n)^{2/3}\log^{20}(n/r) \geq C\log^{20}(n/r) \geq 1 \geq \mathbb{P}(B_j)$$
provided that $n/r$ is fixed sufficiently large. 
Then, for the values of $j = 1, \dots, (n/r)^{2/3}\wedge j_0$, let $\delta = (r/n)^{2/3}$, and we have the following estimate 
\begin{align*}
\mathbb{P}(B_j) &\leq \mathbb{P}(A_j)\\
& \leq \sum_{|m|=100(j-1)}^{100j} \mathbb{P}\Big(\max_{|i| \leq {2|\log \delta| n^{2/3}}}\Big[\log Z_{0, n+\olsi{i}} \Big] = \max_{|j| \leq {\delta n^{2/3}}} \Big[\log Z_{0,n+ \olsi{mr^{2/3}+j})} \Big] \Big) \\
&\leq Cj(r/n)^{2/3}\log^{20}(n/r) \qquad \text{by \eqref{upest}.}
\end{align*}
With this, we finish the proof of Proposition \ref{bj}.

\subsection{Proof of Proposition \ref{prop_e}}
To start, note that the free energy $\log Z_{\mathcal{L}_0, r}^{\textup{in}, \wt j r^{2/3}}$ in \ref{e1} can be bounded from above and below as $$\log Z_{0, r}^{\textup{in},  r^{2/3}} \leq \log Z_{\mathcal{L}_0, r}^{\textup{in}, \wt j r^{2/3}} \leq \log Z_{\mathcal{L}_0, r}.$$
Then, from the right and left tail bounds from Proposition \ref{ptl_upper} and Proposition \ref{wide_similar}, we have $\Big|\mathbb{E}\Big[\log Z_{\mathcal{L}_0, r}^{\textup{in}, \wt j r^{2/3}}\Big] - \Lambda_r\Big| \leq Cr^{1/3}$ and $\mathbb{E}\Big[\Big(\log Z_{\mathcal{L}_0, r}^{\textup{in}, \wt j r^{2/3}} - \Lambda_r\Big)^2\Big] \leq Cr^{2/3}$. This directly implies the variance bound in Proposition \ref{prop_e} \ref{e1}.

For \ref{e2}, we will recall the estimate (4.7) from the arXiv version of \cite{diff_timecorr}, which states that for some constants $C_1$ and $C_2$,
\begin{equation}\label{bdj} \mathbb{P}\Big(  \log Z_{\mathcal{L}_0, r} - \log Z_{\mathcal{L}_0, r}^{\textup{in}, \wt j r^{2/3}}  \geq e^{-C_1\wt j^2 r^{1/3}}\Big) \leq e^{-C_2 \wt j^3}.
\end{equation}

Now, let us define the event $A = \Big\{\log Z_{\mathcal{L}_0, r} - \log Z_{\mathcal{L}_0, r}^{\textup{in}, \wt j r^{2/3}}  \geq e^{-C_1\wt j^2 r^{1/3}}\Big\}$, and the following estimate holds
\begin{align*}
&\mathbb{E}\Big[\Big(\log Z_{\mathcal{L}_0, r} - \log Z_{\mathcal{L}_0, r}^{\textup{in}, \wt j r^{2/3}}\Big)^{10}\Big]\\ 
&= \mathbb{E}\Big[\Big(\log Z_{\mathcal{L}_0, r} - \log Z_{\mathcal{L}_0, r}^{\textup{in}, \wt j r^{2/3}}\Big)^{10}\mathbbm{1}_{A^c}\Big] +  \mathbb{E}\Big[\Big(\log Z_{\mathcal{L}_0, r} - \log Z_{\mathcal{L}_0, r}^{\textup{in}, \wt j r^{2/3}}\Big)^{10}\mathbbm{1}_{A}\Big]\\
& \leq e^{-C\wt j} + \mathbb{E}\Big[\Big(\log Z_{\mathcal{L}_0, r} - \log Z_{\mathcal{L}_0, r}^{\textup{in}, \wt j r^{2/3}}\Big)^{20}\Big]^{1/2}\mathbb{P}(A)^{1/2}\\
& \leq e^{-C\wt j} + Cr^{10/3}e^{-C'\wt j}
\end{align*}
where the expectation bound in the last inequality follows again from Proposition \ref{ptl_upper} and Proposition \ref{wide_similar}, while the probability bound follows from \eqref{bdj}. With this, we complete the proof for Proposition \ref{prop_e} \ref{e2}.

In the remainder of this section, we will prove the following proposition, which, when combined with Proposition \ref{bj}, implies Proposition \ref{prop_e} \ref{e3}. 

\begin{proposition}
There exist positive constants $C_1$ and \(t_0\) such that for each \(j = 1, \dots, j_0\) and \(t \geq t_0\), let \(\wt j = 1 \vee \lfloor \log^{10}j \rfloor\) and the following inequality holds almost surely
$$\mathbbm{1}_{B_j} \mathbb{P}\Big( \log Z_{\mathcal{L}_0, n}- Z_{\mathcal{L}_0, n}^{\textup{out}, R_{0,r}^{\wt j r^{2/3}}} > t r^{1/3}\,\Big|\,\mathcal{F}_r \Big) \leq e^{-C_1({\wt j}^{1/2} + t)}.$$
\end{proposition}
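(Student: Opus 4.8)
The quantity to control is the conditional probability that the full line-to-point free energy $\log Z_{\mathcal{L}_0,n}$ exceeds the restricted one $\log Z_{\mathcal{L}_0,n}^{\textup{out},R_{0,r}^{\wt j r^{2/3}}}$ by more than $tr^{1/3}$, on the event $B_j$. The plan is to decompose the full partition function $Z_{\mathcal{L}_0,n}$ according to whether a path stays outside the parallelogram $R_{0,r}^{\wt j r^{2/3}}$ or intersects it, so that $Z_{\mathcal{L}_0,n} = Z_{\mathcal{L}_0,n}^{\textup{out},R_{0,r}^{\wt j r^{2/3}}} + Z_{\mathcal{L}_0,n}^{\textup{touch},R_{0,r}^{\wt j r^{2/3}}}$. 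Since $\log(a+b) - \log a \le b/a$ for positive $a,b$, it suffices to bound the ratio $Z_{\mathcal{L}_0,n}^{\textup{touch},R_{0,r}^{\wt j r^{2/3}}}/Z_{\mathcal{L}_0,n}^{\textup{out},R_{0,r}^{\wt j r^{2/3}}}$ from above, or, more robustly, to show $\log Z_{\mathcal{L}_0,n}^{\textup{touch},R_{0,r}^{\wt j r^{2/3}}} - \log Z_{\mathcal{L}_0,n}^{\textup{out},R_{0,r}^{\wt j r^{2/3}}}$ is very negative with high conditional probability. A path that touches $R_{0,r}^{\wt j r^{2/3}}$ must, in particular, pass within $\wt j r^{2/3}$ of the diagonal at some anti-diagonal level in $[0,r]$; this forces a transversal-fluctuation penalty.

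First I would split according to the level $\mathbf{u}\in\mathcal{L}_r$ at which the path crosses the line $\mathcal{L}_r$: writing $Z_{\mathcal{L}_0,n}^{\textup{touch}} \le \sum_{\mathbf{u}\in\mathcal{L}_r} Z_{\mathcal{L}_0,\mathbf{u}}^{\textup{touch}}\, Z_{\mathbf{u},n}$ and $Z_{\mathcal{L}_0,n}^{\textup{out}} \ge Z_{\mathcal{L}_0,\mathbf{u}_r^{\textup{max}}}^{\textup{out}} Z_{\mathbf{u}_r^{\textup{max}},n}$ for any fixed $\mathbf{u}$. The $\mathcal{F}_r$-conditioning fixes all the weights on and above $\mathcal{L}_r$, hence fixes $\mathbf{u}_r^{\textup{max}}$, the events $A_j,B_j$, and all factors $Z_{\mathbf{u},n}$; the randomness left is only in $Z_{\mathcal{L}_0,\mathbf{u}}^{\textup{touch}}$ and $Z_{\mathcal{L}_0,\mathbf{u}}^{\textup{out}}$, i.e.\ the weights strictly below $\mathcal{L}_r$. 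On $B_j$ we know the ``touch'' contribution to the right endpoint $(n,n)$ routed through levels $\mathbf{u}$ within $5\wt j r^{2/3}$ of the diagonal loses at least $\wt j^{1/2} r^{1/3}$ in the $Z_{\mathbf{u},n}$ factor relative to $\mathbf{u}_r^{\textup{max}}$ (this is precisely the definition of $B_j$), while the contribution through levels $\mathbf{u}$ outside $\mathcal{L}_r^{5\wt j r^{2/3}}$ loses an additional transversal penalty: any such path, combined with the requirement that it touched $R_{0,r}^{\wt j r^{2/3}}$, has large transversal fluctuation over $[0,r]$, so Proposition \ref{trans_fluc_loss} (loss $\sim \wt j^2 r^{1/3}$ for displacement $\wt j r^{2/3}$) and the right-tail bound Proposition \ref{ptl_upper} combine to give a super-polynomially small conditional probability that $\log Z_{\mathcal{L}_0,\mathbf{u}}^{\textup{touch}}$ beats $\log Z_{\mathcal{L}_0,\mathbf{u}_r^{\textup{max}}}^{\textup{out}}$ by the relevant amount, uniformly over the $O(n)$ choices of $\mathbf{u}$ (after a union bound, the polynomial factor is absorbed). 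Conditioning on an auxiliary high-probability $\mathcal{F}_r$-event that lower-bounds $\log Z_{\mathcal{L}_0,\mathbf{u}_r^{\textup{max}}}^{\textup{out}}$ in the required KPZ range (via Proposition \ref{wide_similar} and Proposition \ref{trans_fluc_loss3}, applied to paths from $\mathcal{L}_0$ to $\mathbf{u}_r^{\textup{max}}$ outside the parallelogram) is what lets us turn these statements into the stated almost-sure bound.

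Assembling: on $B_j$ we get $\log Z_{\mathcal{L}_0,n}^{\textup{touch}} - \log Z_{\mathcal{L}_0,n}^{\textup{out}} \le -c\wt j^{1/2} r^{1/3}$ except on a conditional-probability-$e^{-C\wt j^{1/2}}$ set, and then $\log Z_{\mathcal{L}_0,n} - \log Z_{\mathcal{L}_0,n}^{\textup{out}} \le \log(1 + e^{-c\wt j^{1/2} r^{1/3}}) \le e^{-c\wt j^{1/2} r^{1/3}} \le 1 \le tr^{1/3}$ once $t \ge t_0$, so the event in the proposition already forces the bad set; to get the full $e^{-C_1(\wt j^{1/2}+t)}$ decay including the $t$-dependence, I would instead retain the case where the ``touch'' term is only moderately penalized and feed in the right-tail estimate Proposition \ref{ptl_upper} with parameter growing in $t$ for $\log Z_{\mathbf{u},n}$ and in $\wt j^{1/2}$ from $B_j$, optimizing the split between the $\wt j$ and the $t$ budgets. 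The main obstacle I anticipate is the uniformity over the roughly $n$ crossing levels $\mathbf{u}$ on $\mathcal{L}_r$: the individual estimates must be strong enough (stretched-exponential in $\wt j$ and $t$, with enough room in the exponent) that a crude union bound over all $\mathbf{u}$, including those very far from the diagonal, still leaves the claimed decay — this is exactly where Proposition \ref{trans_fluc_loss}'s quadratic-in-displacement loss is essential, and where one must be careful that the $\mathcal{F}_r$-measurable factors $Z_{\mathbf{u},n}$ for far-away $\mathbf{u}$ are controlled on a single high-probability event rather than pointwise.
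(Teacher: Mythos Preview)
Your strategy is essentially the paper's: decompose $Z_{\mathcal{L}_0,n}=Z^{\textup{out}}+Z^{\textup{touch}}$, reduce to bounding $\log Z^{\textup{touch}}-\log Z^{\textup{out}}$, lower bound $Z^{\textup{out}}$ by routing through $\mathbf{u}_r^{\textup{max}}$ inside a parallelogram disjoint from $R_{0,r}^{\wt j r^{2/3}}$ (this disjointness is where $j\ge2$ enters), then split according to where the maximizing crossing point $\mathbf{p}^*\in\mathcal{L}_r$ lands, using the $B_j$ gap for $\mathbf{p}^*$ near the diagonal and transversal-fluctuation loss for $\mathbf{p}^*$ far away.

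Two places where you are over-complicating things. First, no auxiliary $\mathcal{F}_r$-event is needed anywhere: once $\mathbf{u}_r^{\textup{max}}$ is fixed by $\mathcal{F}_r$, both $Z^{\textup{in},R^{\wt j r^{2/3}}_{\mathbf{u}_r^{\textup{max}}-(r,r),\mathbf{u}_r^{\textup{max}}}}_{\mathcal{L}_0,\mathbf{u}_r^{\textup{max}}}$ and all the $Z^{\textup{touch}}_{\mathcal{L}_0,\mathbf{u}}$ depend only on weights strictly below $\mathcal{L}_r$, so the conditional probability equals an unconditional one and the almost-sure bound is immediate. Second, your anticipated ``main obstacle'' --- controlling the $\mathcal{F}_r$-measurable factors $Z_{\mathbf{u},n}$ for far-away $\mathbf{u}$ --- is a phantom: for the far case you simply use the deterministic inequality $\log Z_{\mathbf{p}^*,n}\le \log Z_{\mathbf{u}_r^{\textup{max}},n}$, so the $\mathcal{F}_r$-measurable terms cancel completely and you are left with a purely unconditional estimate on $\max_{\mathbf{u}\notin\mathcal{L}_r^{4\wt j r^{2/3}}}\log Z^{\textup{touch}}_{\mathcal{L}_0,\mathbf{u}} - \log Z^{\textup{in},\wt j r^{2/3}}_{\mathcal{L}_0,r}$. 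The union bound over crossing points is then handled by chopping both $\mathcal{L}_0$ and $\mathcal{L}_r$ into segments of width $\wt j r^{2/3}$ and invoking Propositions~\ref{trans_fluc_loss} and~\ref{trans_fluc_loss3} segment-by-segment; the quadratic loss makes the sum converge.
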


\begin{proof} To start, note that by definition $\mathbb{P}(B_1) = 0$, thus for the rest of the proof, we will assume that $j \geq 2$. 

Recall $\log Z^{\textup{touch}, R_{0,r}^{\wt j r^{2/3}}}_{\mathcal{L}_0, n}$ to denote the free energy between $\mathcal{L}_0$ and $(n,n)$ with paths that all intersect the parallelogram $R_{0,r}^{\wt jr^{2/3}}$, and by definition, 
$$\log Z_{\mathcal{L}_0, n} \leq \max \Big\{\log Z^{\textup{touch}, R_{0,r}^{\wt j r^{2/3}}}_{\mathcal{L}_0, n} , \log  Z_{\mathcal{L}_0, n}^{\textup{out}, R_{0,r}^{\wt j r^{2/3}}}\Big \} + \log 2. $$
To prove the estimate in our proposition, it suffices for us to show the following  estimate 
$$\mathbbm{1}_{B_j} \mathbb{P}\Big( \log Z^{\textup{touch}, R_{0,r}^{\wt j r^{2/3}}}_{\mathcal{L}_0, n}- \log Z_{\mathcal{L}_0, n}^{\textup{out}, R_{0,r}^{\wt j r^{2/3}}} > t r^{1/3}\,\Big|\,\mathcal{F}_r \Big) \leq  e^{-C(\wt j^{1/2} + t)}.$$

To start, we could replace the term $\log Z^{\textup{touch}, R_{0,r}^{\wt j r^{2/3}}}_{\mathcal{L}_0, n}$ from our estimate  above by 
\begin{equation}
\max_{\mathbf{p} \in \mathcal{L}_r}  \Big\{\log Z^{\textup{touch}, R_{0,r}^{\wt j r^{2/3}}}_{\mathcal{L}_0, \mathbf p} + \log Z_{\mathbf p, n}\Big\}.\label{max1}
\end{equation}
because  
$\eqref{max1} \leq \log Z^{\textup{touch}, R_{0,r}^{\wt  j r^{2/3}}}_{\mathcal{L}_0, n} \leq \eqref{max1} + 100 \log r$ as the directed paths from $\mathbf{p}$ must touch the parallelogram $R_{0,r}^{\wt j r^{2/3}}$.
Let $\mathbf p^*$ denote the unique maximizer of \eqref{max1}. We will separately work on two disjoint events,
$
H = \big\{\mathbf{p}^* \in \mathcal{L}_r^{4\wt 
 jr^{2/3}}\big\} $ and
$H^c$.

Starting with the event $H$, in our estimate below, we use the fact that for $j \geq 2$, if $\mathbf {v}\in \mathcal{L}_r$ satisfies $|\mathbf v \cdot \mathbf e_1 -\mathbf v\cdot \mathbf e_2| \in [10^9(j-1)r^{2/3}, 10^9j r^{2/3})$, then the parallelograms $R_{0,r}^{\wt j r^{2/3}}$ and $R_{\mathbf v - (r,r), \mathbf v}^{\wt j r^{2/3}}$ are disjoint, 
\begin{align}
& \mathbbm{1}_{H_1}\mathbbm{1}_{B_j} \mathbb{P}\Big( \log Z^{\textup{touch}, R_{0,r}^{\wt j r^{2/3}}}_{\mathcal{L}_0, n}- Z_{\mathcal{L}_0, n}^{\textup{out}, R_{0,r}^{\wt j r^{2/3}}} > 2t r^{1/3}\,\Big|\,\mathcal{F}_r \Big)\nonumber\\
& \leq \mathbbm{1}_{H_1}\mathbbm{1}_{B_j} \mathbb{P}\Big(\Big[ \max_{\mathbf{u} \in \mathcal{L}_r^{4\wt jr^{2/3}}} \log {Z}_{\mathcal{L}_0, \mathbf{u}} + \max_{\mathbf{v} \in \mathcal{L}_r^{4\wt jr^{2/3}}} \log Z_{\mathbf v, n}  \Big] \\
& \qquad \qquad \qquad \qquad \qquad \qquad - \Big[  Z_{\mathcal{L}_0, \mathbf{u^{\textup{max}}_r}}^{\textup{in}, R_{\mathbf u^{\textup{max}}_r - (r,r), \mathbf u^{\textup{max}}_r}^{\wt j r^{2/3}}} + \log Z_{\mathbf u^{\textup{max}}_r, n} \Big]> t r^{1/3}\,\Big|\,\mathcal{F}_r \Big). \label{onH}
\end{align}
Now, because we are on the event $B_j$, it holds that 
$$\max_{\mathbf{v} \in \mathcal{L}_r^{4\wt jr^{2/3}}} \log Z_{\mathbf v, n} < \log Z_{\mathbf u^{\textup{max}}_r, n} -  \wt j^{1/2} r^{1/3},$$
and plugging this into \eqref{onH} and continue the estimate,
\begin{align*}\eqref{onH} &\leq \mathbbm{1}_{H_1}\mathbbm{1}_{B_j} \mathbb{P}\Big( \max_{\mathbf{u} \in \mathcal{L}_r^{4\wt jr^{2/3}}} \log {Z}_{\mathcal{L}_0, \mathbf{u}} - Z_{\mathcal{L}_0, \mathbf{u^{\textup{max}}_r}}^{\textup{in}, R_{\mathbf u^{\textup{max}}_r - (r,r), \mathbf u^{\textup{max}}_r}^{\wt j r^{2/3}}}> (\wt  j^{1/2} + t) r^{1/3}\,\Big|\,\mathcal{F}_r \Big)\\
& \leq \mathbb{P}\Big( \max_{\mathbf{u} \in \mathcal{L}_r^{4\wt jr^{2/3}}} \log {Z}_{\mathcal{L}_0, \mathbf{u}} - \Lambda_r> \tfrac{1}{2}(\wt j^{1/2} + t) r^{1/3} \Big) \\
& \qquad \qquad \qquad + \mathbb{P}\Big( Z_{\mathcal{L}_0, r}^{\textup{in}, {\wt j r^{2/3}}} - \Lambda_r <-\tfrac{1}{2}(\wt j^{1/2}+ t) r^{1/3} \Big)\\
& \leq 2e^{-C(\wt j^{1/2} + t)}\qquad \textup{ by Proposition \ref{ptl_upper} and Proposition \ref{wide_similar}.}
\end{align*}

Next, let us look at the estimate on $H^c$,
\begin{align}
& \mathbbm{1}_{H^c}\mathbbm{1}_{B_j} \mathbb{P}\Big( \log Z^{\textup{touch}, R_{0,r}^{\wt j r^{2/3}}}_{\mathcal{L}_0, n}- Z_{\mathcal{L}_0, n}^{\textup{out}, R_{0,r}^{\wt j r^{2/3}}} > 2t r^{1/3}\,\Big|\,\mathcal{F}_r \Big)\nonumber\\
& \leq \mathbbm{1}_{H^c}\mathbbm{1}_{B_j} \mathbb{P}\Big(\Big[ \log {Z}^{\textup{touch}, R_{0,r}^{\wt j r^{2/3}}}_{\mathcal{L}_0, \mathbf{p}^*} +  \log Z_{\mathbf p^*, n}  \Big] - \Big[  Z_{\mathcal{L}_0, \mathbf{u^{\textup{max}}_r}}^{\textup{in}, R_{\mathbf u^{\textup{max}}_r - (r,r), \mathbf u^{\textup{max}}_r}^{\wt j r^{2/3}}} + \log Z_{\mathbf u^{\textup{max}}_r, n} \Big]> t r^{1/3}\,\Big|\,\mathcal{F}_r \Big)\nonumber\\
& \leq  \mathbbm{1}_{H^c}\mathbbm{1}_{B_j} \mathbb{P}\Big( \max_{\mathbf u \in \mathcal{L}_r \setminus \mathcal{L}_r^{4\wt j r^{1/3}}} \log {Z}^{\textup{touch}, R_{0,r}^{\wt j r^{2/3}}}_{\mathcal{L}_0, \mathbf{u}}  -   Z_{\mathcal{L}_0, \mathbf{u^{\textup{max}}_r}}^{\textup{in}, R_{\mathbf u^{\textup{max}}_r - (r,r), \mathbf u^{\textup{max}}_r}^{\wt j r^{2/3}}} > t r^{1/3}\,\Big|\,\mathcal{F}_r \Big)\nonumber\\
& \leq \mathbb{P}\Big( \max_{\mathbf u \in \mathcal{L}_r \setminus \mathcal{L}_r^{4\wt j r^{2/3}}} \log {Z}^{\textup{touch}, R_{0,r}^{\wt j r^{2/3}}}_{\mathcal{L}_0, \mathbf{u}}  -  (\Lambda_r - \epsilon_0\wt j r^{1/3}) > \tfrac{1}{2}t r^{1/3}\Big)\label{Hc1}\\
&  \qquad \qquad  + \mathbb{P}\Big(  Z_{\mathcal{L}_0, r}^{\textup{in}, {\wt j r^{2/3}}} -    (\Lambda_r - \epsilon_0\wt j r^{1/3})< - \tfrac{1}{2} tr^{1/3} \Big)\label{Hc2}
\end{align}
where $\epsilon_0$ is a small absolute constant which we will fix below \eqref{fixe}.
Note that \eqref{Hc2} is bounded by $e^{-C(\wt j^{1/2} + t)}$ by Proposition \ref{wide_similar}, thus to complete the proof, we will upper bound \eqref{Hc1}. 

To start, we may restrict our attention to paths between 
$\mathcal{L}_0^{20\wt j r}$ and $\mathcal{L}_r^{20 \wt j r} \setminus \mathcal{L}_r^{4\wt j r^{2/3}}$ and for $ \wt j \leq 10 r^{1/3}$, because the paths are directed and all must touch the parallelogram $R_{0,r}^{\wt j r^{2/3}}$. Then, it suffices to show for 
\begin{equation}\label{short}
\mathbb{P}\Big( \max_{\mathbf u \in \mathcal{L}_r^{20\wt j r} \setminus \mathcal{L}_r^{4\wt j r^{2/3}}} \log {Z}^{\textup{touch}, R_{0,r}^{\wt j r^{2/3}}}_{\mathcal{L}_0^{20\wt j r}, \mathbf{u}}  -  \Lambda_r  > (- \epsilon_0\wt j + \tfrac{1}{2}t) r^{1/3}\Big) \leq e^{-C(\wt j + t)}.
\end{equation} 

For values of $\wt j$ less than any absolute constant (independent of $r$ and $n$), this can be upper bounded by Proposition \ref{ptl_upper} and changing the constant $C$ on the right side of \eqref{short}. Thus from now on, we may assume that $\wt j$ is large.

We will make a decomposition of the segments
$\mathcal{L}_0^{20\wt jr}$ and $\mathcal{L}_r^{20\wt j r} \setminus \mathcal{L}_r^{4\wt j r^{2/3}}$. For odd integers $u$ and $v$ satisfy  $|u|\leq 20r^{1/3}$ and $5\leq |v|\leq 20r^{1/3}$,  let us define 
$$I_u = \mathcal{L}_{\olsi{u\wt j r^{2/3}}}^{\wt j r^{2/3}}\qquad  \textup{ and } \qquad  J_v = (r,r) +  \mathcal{L}_{\olsi{v\wt j r^{2/3}}}^{\wt j r^{2/3}}.$$
We will start by controlling the free energy between each pair of $I_u$ and $J_v$, showing that
\begin{equation}\label{uvest}
\mathbb{P}\Big(\log {Z}^{\textup{touch}, R_{0,r}^{\wt j r^{2/3}}}_{I_u, J_v}  -  \Lambda_r  > (- \epsilon_0 \wt j + \tfrac{1}{2}t) r^{1/3}\Big) \leq e^{-C(\wt j + t) +(|u|+|v|)^{1/10}}.
\end{equation}

First, if $u$ and $v$ have different signs, then the paths from $I_u$ to $J_v$ must have a large transversal fluctuation of order at least $\tfrac{1}{10}(|u|+|v|)\wt jr^{2/3}$. Recall the constant $c^*$ from Proposition \ref{trans_fluc_loss}, the expression $(-\epsilon_0 \wt j+\tfrac12t)r^{1/3}$ from \eqref{uvest} can be written as 
\begin{equation}\label{fixe}
(-\epsilon_0 \wt j^{1/2}+\tfrac12t)r^{1/3} = \big(-\tfrac{c^*}{10}(|u| +|v|)\wt j + \big(\tfrac{c^*}{10}(|u| +|v|) - \epsilon_0\big)\wt j +\tfrac12t\big)r^{1/3}
\end{equation}
and we can fix $\epsilon_0$ sufficiently small so that $\tfrac{c^*}{10}(|u| +|v|)\wt j - \epsilon_0> 0$.
Then, \eqref{uvest} holds due to Proposition \ref{trans_fluc_loss}.

Now, we look at the case when $u$ and $v$ are both positive, and we will split the estimate into two cases:
\begin{enumerate}[(1)]
\item When $|u-v| \geq \max\{3, \tfrac{1}{10}(u+v)\}$, the paths between them must have a transversal fluctuation of order at least $\tfrac{1}{10}(u+v)\wt jr^{2/3}$. Then again, \eqref{uvest} follows from Proposition \ref{trans_fluc_loss}. 
\item When $|u-v| \leq \max\{2, \tfrac{1}{10}(u+v)\}$, note we must have $u \geq 3$ since $v \geq 5$ by definition. Because the paths between $I_u$ and $J_v$ must touch $R_{0,r}^{\wt jr^{2/3}}$, they again will have a transversal fluctuation of order at least $\tfrac{1}{10}(u+v)\wt jr^{2/3}$. In this case, \eqref{uvest} follows from Proposition \ref{trans_fluc_loss3}. In our  application of Proposition \ref{trans_fluc_loss3}, let $q = \max\{2, \frac{2}{10}(u+v)\wt j\}$ and we are bounding the probability
$$\mathbb{P}\Big(\log {Z}^{\textup{exit}, {q r^{2/3}}}_{\mathcal{L}_{\olsi{u\wt jr^{2/3}}}^{2qr^{2/3}}, (r,r)+\mathcal{L}_{\olsi {u\wt jr^{2/3}}}^{2qr^{2/3}}}  -  \Lambda_r  > (-  c^*qr^{1/3} + (c^*q - \epsilon_0\wt j) + \tfrac{1}{2}t) r^{1/3}\Big)$$
which upper bounds \eqref{uvest}.

\end{enumerate}
Once we have shown \eqref{uvest}, by a union bound, 
\begin{align*}
\eqref{short} &\leq \sum_{u,v} \mathbb{P}\Big(\log {Z}^{\textup{touch}, R_{0,r}^{\wt j r^{2/3}}}_{I_u, J_v}  -  \Lambda_r  > (- \epsilon_0\wt  j + \tfrac{1}{2}t) r^{1/3} - 100(\log r +\log \wt j)\Big)\\
&\leq  \sum_{u, v} e^{-C(\wt  j + t) +(|u|+|v|)^{1/10}}
 \leq e^{-C(\wt j^{1/2} + t)}.
\end{align*}
This finishes the argument on the event $H^c$ and completes the proof of the proposition. 

\end{proof}

\subsection{Proof of Proposition \ref{prop_A} and Proposition \ref{prop_AA}}

We will start with two results that upper bounds the right and left tails of $\log Z_{\mathcal{L}_0, n} - \log Z_{\mathcal{L}_r, n} - \Lambda_r$, given by Proposition \ref{replace}  and Proposition  \ref{replace1} respectively. We will work with a slightly less optimal order $r^{1/3} \log(n/r)$, instead of the optimal order $r^{1/3}$.

\begin{proposition}\label{replace}
There exist constants $r_0, t_0$ such that for each $r\geq r_0$ and $t\geq t_0$, there exists an $\mathcal{F}_r$-measurable event $G_t$ with $\mathbb{P}(G_t)\geq 1-e^{-\sqrt t}$ such that for each $s \geq t$, the following holds,
\begin{equation}\label{341}
\mathbbm{1}_{G_t} \mathbb{P}\Big(\log Z_{\mathcal{L}_0, n} - \log Z_{\mathcal{L}_r, n} - \Lambda_r \geq sr^{1/3}\log(n/r) \,\Big|\, \mathcal{F}_r\Big) \leq e^{-\sqrt s}.
\end{equation}
Consequently, by monotonicity, for each $j = 1, \dots, j_0$, 
\begin{equation}\label{342}
\mathbbm{1}_{G_t} \mathbb{P}\Big(\log Z_{\mathcal{L}_0, n}^{\textup{out}, R_{0,r}^{\wt j r^{2/3}}} - \log Z_{\mathcal{L}_r, n} - \Lambda_r \geq sr^{1/3}\log(n/r) \,\Big|\, \mathcal{F}_r\Big) \leq e^{-\sqrt s}.
\end{equation}
\end{proposition}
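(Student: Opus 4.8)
The plan is to exploit the Markov-type factorization of the polymer across the anti-diagonal line $\mathcal L_r$. An up-right path from $\mathcal L_0$ to $(n,n)$ meets $\mathcal L_r$ in a unique vertex, so by additivity of partition functions $Z_{\mathcal L_0,n}=\sum_{\mathbf u\in\mathcal L_r}Z_{\mathcal L_0,\mathbf u}\,Z_{\mathbf u,n}$; writing $\pi(\mathbf u)=Z_{\mathbf u,n}/Z_{\mathcal L_r,n}$ for the $\mathcal F_r$-measurable exit-point probability vector on $\mathcal L_r$ this becomes
$$\log Z_{\mathcal L_0,n}-\log Z_{\mathcal L_r,n}-\Lambda_r=\log\Big(\sum_{\mathbf u\in\mathcal L_r}e^{-\Lambda_r}Z_{\mathcal L_0,\mathbf u}\,\pi(\mathbf u)\Big).$$
The structural point is that $\{Z_{\mathcal L_0,\mathbf u}\}_{\mathbf u\in\mathcal L_r}$, which involves only weights strictly below $\mathcal L_r$, is independent of $\mathcal F_r$, so conditionally on $\mathcal F_r$ we must bound $\log$ of a \emph{deterministic} $\pi$-average of independent nonnegative variables, whose fluctuations live on the scale $r^{1/3}$ rather than $n^{1/3}$.

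First I would fix $G_t$. Split $\mathcal L_r$ into the annuli $\mathcal B_k=\{\mathbf u\in\mathcal L_r:\ |\mathbf u\cdot\mathbf e_1-\mathbf u\cdot\mathbf e_2|\in[kn^{2/3},(k+1)n^{2/3})\}$, $k\ge0$, and let $G_t$ be the $\mathcal F_r$-measurable event on which (i) $\log Z_{(r,r),(n,n)}\ge\Lambda_{n-r}-t(n-r)^{1/3}$ and (ii) $\max_{\mathbf u\in\mathcal B_k}\log Z_{\mathbf u,n}-\Lambda_{n-r}\le(-c^* k^2+k+t)(n-r)^{1/3}$ for every $k\ge0$. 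Part (i) is a single-point left-tail bound (Proposition \ref{low_ub} at scale $n-r$), with failure probability $\le e^{-Ct^{3/2}}$; part (ii) follows by a union bound over $k$ from the transversal-fluctuation loss estimate for the (reversed, rescaled) polymer from $(n,n)$ down to the far part $\mathcal L_r\setminus\mathcal L_r^{\asymp k(n-r)^{2/3}}$ of $\mathcal L_r$ (Proposition \ref{trans_fluc_loss}, via $\max_{\mathbf u\in\mathcal B_k}\log Z_{\mathbf u,n}\le\log Z_{\mathcal B_k,n}$ where $Z_{\mathcal B_k,n}=\sum_{\mathbf u\in\mathcal B_k}Z_{\mathbf u,n}$), with failure probability $\le\sum_k e^{-C(k+t)}\le Ce^{-Ct}$. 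Thus $\mathbb P(G_t)\ge 1-e^{-\sqrt t}$ for $t\ge t_0$; and on $G_t$, combining (i), (ii) with $Z_{\mathcal L_r,n}\ge Z_{(r,r),(n,n)}$ gives $\rho_k:=\max_{\mathbf u\in\mathcal B_k}\pi(\mathbf u)\le e^{(-c^* k^2+k+2t)(n-r)^{1/3}}$, so $\rho_k\le1$ always while $-\log\rho_k\gtrsim k^2(n-r)^{1/3}$ once $k\gtrsim\sqrt t$.

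For the conditional tail, fix $s\ge t$ and put $a=s\,r^{1/3}\log(n/r)$. Using $\sum_{\mathbf u\in\mathcal B_k}Z_{\mathcal L_0,\mathbf u}\pi(\mathbf u)\le\rho_k Z_{\mathcal L_0,\mathcal B_k}$, with $Z_{\mathcal L_0,\mathcal B_k}=\sum_{\mathbf u\in\mathcal B_k}Z_{\mathcal L_0,\mathbf u}$, and a pigeonhole with weights $w_k=c(k+1)^{-2}$, $\sum_k w_k\le1$, we get on $G_t$
$$\mathbb P\Big(\sum_{\mathbf u\in\mathcal L_r}e^{-\Lambda_r}Z_{\mathcal L_0,\mathbf u}\pi(\mathbf u)\ge e^{a}\ \Big|\ \mathcal F_r\Big)\ \le\ \sum_{k\ge0}\mathbb P\Big(\log Z_{\mathcal L_0,\mathcal B_k}-\Lambda_r\ge a+\log w_k-\log\rho_k\Big),$$
where the conditioning drops since $Z_{\mathcal L_0,\mathcal B_k}$ is independent of $\mathcal F_r$. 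By translation invariance (sliding the width-$n^{2/3}$ segment $\mathcal B_k$ along the full line $\mathcal L_0$) the law of $\log Z_{\mathcal L_0,\mathcal B_k}$ does not depend on $k$, and by reflection it is that of a segment-to-line free energy controlled by Proposition \ref{ptl_upper} at scale $r$ with width parameter $h\asymp(n/r)^{2/3}$, giving $\mathbb P(\log Z_{\mathcal L_0,\mathcal B_k}-\Lambda_r\ge\lambda r^{1/3})\le e^{-C\min\{\lambda^{3/2},\lambda r^{1/3}\}}$; the admissibility condition $h\le e^{C_1\min\{\lambda^{3/2},\lambda r^{1/3}\}}$ holds because the relevant $\lambda$ is $\ge\tfrac12 s\log(n/r)$, which dominates $\log h\asymp\log(n/r)$ for $s\ge t_0$. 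Now split the $k$-sum: for the $O(\sqrt t)$ values $k\lesssim\sqrt t$ (where only $\rho_k\le1$ is used) the threshold is $\ge\tfrac12 a$, so each term is $\le e^{-Cs\log(n/r)}\le e^{-Cs}$; for $k\gtrsim\sqrt t$ the extra $-\log\rho_k\gtrsim k^2(n-r)^{1/3}$ pushes the threshold up so far that each term is $\le e^{-C(s\log(n/r)+k^2)}$, and since $\sum_k e^{-Ck^2}<\infty$ this part totals $\le C'e^{-Cs\log(n/r)}\le C'(n/r)^{-Cs}\le C'2^{-Cs}$. Both pieces are $\le\tfrac12 e^{-\sqrt s}$ once $t_0$ is large (using only $r\ge c_0$, $n/r\ge2$, $n\ge n_0$ to dispatch the error terms in the thresholds), which proves \eqref{341}. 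Then \eqref{342} is immediate: confining paths to the complement of $R_{0,r}^{\wt j r^{2/3}}$ only decreases the partition function, so $\log Z_{\mathcal L_0,n}^{\textup{out},R_{0,r}^{\wt j r^{2/3}}}\le\log Z_{\mathcal L_0,n}$ and the event of \eqref{342} is contained in that of \eqref{341}.

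The main obstacle is to keep everything uniform over the whole range $c_0\le r\le n/2$: the number of annuli is of order $n^{1/3}$ and the width parameter in Proposition \ref{ptl_upper} is of order $(n/r)^{2/3}$, both of which must be absorbed, and when $r$ is comparable to $n/2$ the target scale $r^{1/3}\log(n/r)$ degenerates to order $r^{1/3}$; thus one genuinely needs the exit-measure concentration $\rho_k\le e^{-ck^2(n-r)^{1/3}}$ for the far annuli together with only the crude bound $r^{1/3}\log(n/r)\ge c_0^{1/3}\log 2$ for the near ones, and carrying the moderate-deviation-versus-linear dichotomy of Propositions \ref{ptl_upper} and \ref{low_ub} correctly through these splittings is the delicate bookkeeping.
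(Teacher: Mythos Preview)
Your argument is correct and follows essentially the same route as the paper: decompose $\mathcal L_r$ into segments of width $\sim n^{2/3}$, take $G_t$ to be the $\mathcal F_r$-measurable event on which the far-segment free energies $\log Z_{I_u,n}-\log Z_{\mathcal L_r,n}$ obey the quadratic transversal-fluctuation loss (via Propositions~\ref{low_ub} and~\ref{trans_fluc_loss}), and then control $\log Z_{\mathcal L_0,I_u}-\Lambda_r$ segment by segment with Proposition~\ref{ptl_upper}. The only cosmetic differences are that the paper replaces $\log Z_{\mathcal L_0,n}$ by the max-plus approximation $\max_{\mathbf p\in\mathcal L_r}\{\log Z_{\mathcal L_0,\mathbf p}+\log Z_{\mathbf p,n}\}$ and splits according to whether the maximizer lies in $\mathcal L_r^{tn^{2/3}}$, whereas you keep the exact exit-measure sum and pigeonhole over all annuli; and the paper builds $G_t$ directly from the ratios $\log Z_{I_u,n}-\log Z_{\mathcal L_r,n}$ (only for $|u|\ge t$), whereas you split this into an upper bound (ii) on each $\max_{\mathbf u\in\mathcal B_k}\log Z_{\mathbf u,n}$ and a lower bound (i) on $\log Z_{(r,r),(n,n)}$.
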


\begin{proof}

To start, note that \eqref{341} implies \eqref{342} as $\log Z_{\mathcal{L}_0, n}^{\textup{out}, R_{0,r}^{\wt j r^{2/3}}} \leq \log Z_{\mathcal{L}_0, n}$. Thus, we will just need to prove \eqref{341}. And to show \eqref{341}, it suffices for us to establish the estimate by replacing $\log Z_{\mathcal{L}_0, n}$ with 
$$\max_{\mathbf{p} \in \mathcal{L}_r}  \Big\{\log Z_{\mathcal{L}_0, \mathbf p} + \log Z_{\mathbf p, n}\Big\},$$
since $\log Z_{\mathcal{L}_0, n}\leq 
\max_{\mathbf{p} \in \mathcal{L}_r}  \Big\{\log Z_{\mathcal{L}_0, \mathbf p} + \log Z_{\mathbf p, n}\Big\} + 2\log n$, and $\log n \leq 10 r^{1/3}\log(n/r)$.

Let $\mathbf p^*$ denote the unique maximizer above, and we will split the estimate into two cases, according to the events $H = \{\mathbf p^* \in \mathcal{L}_r^{tn^{2/3}}\}$ and $H^c$. 
Starting with the event $H$, 
\begin{align*}
&\mathbb{P}\Big(\Big\{\log Z_{\mathcal{L}_0, \mathbf p^*} + \log Z_{\mathbf p^*, n}- \log Z_{\mathcal{L}_r, n} - \Lambda_r \geq sr^{1/3}\log(n/r)\Big\} \bigcap H \,\Big|\, \mathcal{F}_r \Big) \\
&\leq \mathbb{P}\Big(\max_{\mathbf u \in \mathcal{L}_r^{tn^{2/3}}} \log Z_{\mathcal{L}_0, \mathbf u} - \Lambda_r \geq sr^{1/3}\log(n/r) \Big)
 \leq e^{-Cs\log(n/r)} \leq e^{-\sqrt s}
\end{align*}
where the second last inequality follows from Proposition \ref{ptl_upper}. In its application, we note that the variable $h$ appearing in Proposition \ref{ptl_upper} is set to be  $t(n/r)^{2/3}$ and the upper bound assumption on $h$ in Proposition \ref{ptl_upper} is satisfied provided that $t_0$ is fixed sufficiently large. This finishes the argument on the event $H$. 

On the event $H^c$, we start by decomposing the line segment $\mathcal{L}_r \setminus \mathcal{L}_{r}^{tn^{2/3}}$ into smaller segments of length $2n^{2/3}$. For integers $u$  with $t\leq |u|\leq n^{1/3}$, let 
$$I_u = \mathcal{L}_{(r,r) + \olsi{u n^{2/3}}}^{n^{2/3}}.$$ 
Then, for a small fixed constant $c_1$, let us define the $\mathcal{F}_r$-measurable event $G_t$ to be 
\begin{equation}\label{def_A}
G_t = \bigcap_{t\leq |u| \leq n^{1/3}}  \Big\{\log Z_{I_u, n} - \log Z_{\mathcal{L}_r, n} \leq -c_1 u^2 n^{1/3} \Big\}.
\end{equation}
For $c_1$ sufficiently small, by Proposition \ref{low_ub} and Proposition \ref{trans_fluc_loss},
\begin{align*}
 \mathbb{P}(G^c_t) &\leq \sum_{t\leq |u|\leq n^{1/3}}\mathbb{P}\Big( \log Z_{I_u, n} - \log Z_{\mathcal{L}_r, n} \geq -c_1 u^2 n^{1/3}\Big)\\
& \leq \sum_{t\leq |u|\leq n^{1/3}} \Big[\mathbb{P}\Big( \log Z_{I_u, n} - \Lambda_{n-r}\geq -\tfrac{1}{2}2c_1 u^2 n^{1/3}\Big) + \mathbb{P}\Big(  \log Z_{\mathcal{L}_r, n } - \Lambda_{n-r} \leq -\tfrac{1}{2}c_1 u^2 n^{1/3}\Big)\Big]\\
& \leq \sum_{t\leq |u|\leq n^{1/3}} e^{-Cu^2} \leq e^{-\sqrt t}.
\end{align*}
Now, on this event $G_t$, the following inequality holds 
\begin{align*}
&\mathbbm{1}_{G_t}\mathbb{P}\Big(\Big\{\log Z_{\mathcal{L}_0, \mathbf p^*} + \log Z_{\mathbf p^*, n}- \log Z_{\mathcal{L}_r, n} - \Lambda_r \geq sr^{1/3}\log(n/r)\Big\} \bigcap H^c \, \Big| \, \mathcal{F}_r \Big) \\
&\leq \sum_{t\leq |u|\leq n^{1/3}}\mathbbm{1}_{G_t}\mathbb{P}\Big(\log Z_{\mathcal{L}_0, I_u} + \log Z_{I_u, n} - \log Z_{\mathcal{L}_r, n}- \Lambda_r \geq sr^{1/3}\log(n/r) \, \Big| \, \mathcal{F}_r\Big)\\
&\leq \sum_{t \leq |u|\leq n^{1/3}} \mathbb{P}\Big(\log Z_{\mathcal{L}_0, I_u} - \Lambda_r \geq s r^{1/3}\log(n/r) + c_1u^2 \frac{n^{1/3}}{r^{1/3}} r^{1/3} \Big) \\
& \leq \sum_{|u|\geq t} e^{-C(s+u^2(n/r)^{1/3})}  \leq e^{-\sqrt s}
\end{align*}
where the first inequality on the last line again follows from Proposition \ref{ptl_upper}.
With this, we have finished the proof of the proposition. 
\end{proof}

In addition to the upper tail estimate, we will now present the lower tail estimates. 
\begin{proposition}\label{replace1}
There exist constants $r_0, t_0$ such that for each $r\geq r_0$ and $t\geq t_0$, the following holds,
$$\mathbb{P}\Big(\log Z_{\mathcal{L}_0, n} - \log Z_{\mathcal{L}_r, n} - \Lambda_r \leq - tr^{1/3}\log(n/r)\,\Big|\, \mathcal{F}_r\Big) \leq e^{-\sqrt t}.$$
In addition, for each $j=1,\dots, j_0$, it holds that 
$$\mathbbm{1}_{B_j}\mathbb{P}\Big(\log Z_{\mathcal{L}_0, n}^{\textup{out}, R_{0,r}^{\wt j r^{2/3}}} - \log Z_{\mathcal{L}_r, n} - \Lambda_r \leq -tr^{1/3}\log(n/r)\,\Big|\, \mathcal{F}_r\Big) \leq e^{-\sqrt t}.$$
\end{proposition}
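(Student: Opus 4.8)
The plan is to prove the first (unrestricted) estimate and then deduce the restricted one exactly as in Proposition \ref{replace}: since on $B_j$ the event forces $\log Z_{\mathcal{L}_0, n}^{\textup{out}, R_{0,r}^{\wt j r^{2/3}}} \le \log Z_{\mathcal{L}_0, n}$ to be small, a lower-tail bound on $\log Z_{\mathcal{L}_0, n} - \log Z_{\mathcal{L}_r, n} - \Lambda_r$ controls the restricted quantity only if we also know the restricted free energy is not much smaller than the full one; the natural route is instead to note $\log Z_{\mathcal{L}_0, n}^{\textup{out}, R_{0,r}^{\wt j r^{2/3}}} \ge \log Z_{\mathcal{L}_0, r}^{\textup{out}, \dots} + \dots$, but more simply we observe that paths exiting the parallelogram through the far diagonal side still pass near $\mathcal{L}_r$, so $\log Z_{\mathcal{L}_0, n}^{\textup{out}, R_{0,r}^{\wt j r^{2/3}}} \ge \max_{\mathbf p \in \mathcal{L}_r \setminus \mathcal{L}_r^{\wt j r^{2/3}}}\{\log Z_{\mathcal{L}_0, \mathbf p}^{\textup{out}} + \log Z_{\mathbf p, n}\} - O(\log n)$, which is bounded below by $\log Z_{\mathcal{L}_r \setminus \mathcal{L}_r^{\wt j r^{2/3}}, n}$ minus a correction; the cleanest choice is to lower bound by restricting to a single well-chosen point, say $\mathbf u_r^{\max}$ on the event $B_j \subset A_j$ (where $|\mathbf u_r^{\max}\cdot\mathbf e_1 - \mathbf u_r^{\max}\cdot\mathbf e_2| \ge 10^9(j-1)r^{2/3} \ge \wt j r^{2/3}$, so a path through $\mathbf u_r^{\max}$ indeed lies outside the parallelogram near level $r$), giving $\log Z_{\mathcal{L}_0, n}^{\textup{out}} \ge \log Z_{\mathcal{L}_0, \mathbf u_r^{\max}} + \log Z_{\mathbf u_r^{\max}, n} - O(\log n)$. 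I would then reduce the restricted bound to the first inequality applied at, or summed over, such points.

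For the first inequality itself: write $\log Z_{\mathcal{L}_0, n} \ge \log Z_{\mathcal{L}_0, \mathbf p} + \log Z_{\mathbf p, n}$ for any $\mathbf p \in \mathcal{L}_r$, and in particular $\log Z_{\mathcal{L}_0, n} \ge \max_{\mathbf p \in \mathcal{L}_r}\{\log Z_{\mathcal{L}_0, \mathbf p} + \log Z_{\mathbf p, n}\}$, while $\log Z_{\mathcal{L}_r, n} = \max_{\mathbf p \in \mathcal{L}_r}\log Z_{\mathbf p, n}$ up to the usual $O(\log n)$ error when converting the sum defining $Z_{\mathcal{L}_r,n}$ to a max (and $\log n \le 10 r^{1/3}\log(n/r)$). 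Thus
\[
\log Z_{\mathcal{L}_0, n} - \log Z_{\mathcal{L}_r, n} \ge \log Z_{\mathcal{L}_0, \mathbf p^{\dagger}} + \log Z_{\mathbf p^\dagger, n} - \log Z_{\mathbf p^\dagger, n} = \log Z_{\mathcal{L}_0, \mathbf p^\dagger} - O(\log n),
\]
where $\mathbf p^\dagger \in \mathcal{L}_r$ is the maximizer of $\log Z_{\mathbf p, n}$; more carefully one should take $\mathbf p^\dagger$ to maximize $\log Z_{\mathbf p, n}$ and then bound $\log Z_{\mathcal{L}_0, \mathbf p^\dagger}$ from below, but $\mathbf p^\dagger$ is $\mathcal{F}_r^c$-measurable issues force instead the simultaneous control over all relevant $\mathbf p$. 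So the honest argument is: the event $\{\log Z_{\mathcal{L}_0, n} - \log Z_{\mathcal{L}_r, n} - \Lambda_r \le -tr^{1/3}\log(n/r)\}$ implies that for the ($\mathcal{F}_r$-measurable!) maximizer $\mathbf p^\dagger$ of $\log Z_{\mathbf p, n}$ over $\mathcal{L}_r$, we have $\log Z_{\mathcal{L}_0, \mathbf p^\dagger} - \Lambda_r \le -tr^{1/3}\log(n/r) + O(\log n) \le -\tfrac12 t r^{1/3}\log(n/r)$. Crucially $\mathbf p^\dagger$ depends only on weights on or above $\mathcal{L}_r$, hence is $\mathcal{F}_r$-measurable, while $\log Z_{\mathcal{L}_0, \mathbf p^\dagger}$ depends only on weights strictly below $\mathcal{L}_r$ and thus is independent of $\mathcal{F}_r$. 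Conditionally on $\mathcal{F}_r$, $\mathbf p^\dagger$ is a fixed point and we may split on its location: if $\mathbf p^\dagger \in \mathcal{L}_r^{Kn^{2/3}}$ for a suitable $K = K(t)$ (say $K \asymp t$ or $K\asymp \log(n/r)$), then $\log Z_{\mathcal{L}_0, \mathbf p^\dagger} \ge \log Z_{0, \mathbf p^\dagger}^{\textup{in}, K n^{2/3}}$ which has the required left-tail bound by Proposition \ref{wide_similar} applied with $\theta \asymp K$ (noting $\mathbf p^\dagger$ is at distance $O(r)$, scale $r$, so $K n^{2/3} = K(n/r)^{2/3} r^{2/3}$, i.e. $\theta \asymp K(n/r)^{2/3}$, whose product with $t r^{1/3}\log(n/r)/r^{1/3}$ gives an exponent $\gtrsim K(n/r)^{2/3} t\log(n/r) \gg \sqrt t$); if $\mathbf p^\dagger \notin \mathcal{L}_r^{K n^{2/3}}$, that is a property of $\mathcal{F}_r$ alone, and it is precluded (with $\mathcal{F}_r$-probability $\ge 1 - e^{-\sqrt t}$, or it is simply absorbed) by a transversal-fluctuation loss estimate — Proposition \ref{trans_fluc_loss} applied to $\log Z_{\mathbf p, n}$ shows the maximizer over $\mathcal{L}_r$ is within $O((n/r)^{1/3}\cdot$ polylog$)\,n^{2/3}$, but this is not $\mathcal{F}_r$-measurable-free; so instead, when $\mathbf p^\dagger$ is far one uses that $\log Z_{\mathbf p^\dagger, n} - \Lambda_{n-r}$ is then very negative (this IS $\mathcal{F}_r$-measurable, being a statement about weights above $\mathcal{L}_r$), hence $\log Z_{\mathcal{L}_r, n} - \Lambda_{n-r}$ is very negative unless some closer point does better, contradicting $\mathbf p^\dagger$ being the maximizer; combined with $\log Z_{\mathcal{L}_0, n} \ge \log Z_{0, r} + \log Z_{r, n}$ (a crude deterministic lower bound through the diagonal point $(r,r)$), the target event becomes impossible. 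This dichotomy is where I expect the bulk of the bookkeeping to live.

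The main obstacle is the measurability juggling: $\log Z_{\mathcal{L}_0, n}$ is not $\mathcal{F}_r$-measurable, yet we must produce a conditional (on $\mathcal{F}_r$) bound, and the natural lower bound $\log Z_{\mathcal{L}_0, \mathbf p} + \log Z_{\mathbf p, n}$ splits cleanly across $\mathcal{L}_r$ only after one has pinned down which $\mathbf p$ to use in an $\mathcal{F}_r$-measurable way. The fix, as above, is to select $\mathbf p^\dagger$ as the maximizer of the $\mathcal{F}_r$-measurable quantity $\log Z_{\mathbf p, n}$, exploit the resulting independence of $\log Z_{\mathcal{L}_0, \mathbf p^\dagger}$ from $\mathcal{F}_r$, and then handle the (small, $\mathcal{F}_r$-measurable) event that $\mathbf p^\dagger$ strays beyond KPZ scale by a separate transversal-fluctuation argument, mirroring the event $G_t$ construction in the proof of Proposition \ref{replace} — indeed here the analogue of $G_t$ is not even needed in the statement because the left tail only requires a lower bound on $\log Z_{\mathcal{L}_0, n}$, which is an \emph{unconditional} almost-sure inequality once we go through $(r,r)$, so the $e^{-\sqrt t}$ can be achieved without an exceptional event. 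Finally, for the restricted version the only extra point is that on $B_j$ a near-optimal $\mathbf p$ can be chosen outside the parallelogram (since $B_j\subset A_j$ forces the genuine maximizer $\mathbf u_r^{\max}$ to be at distance $\ge \wt j r^{2/3}$, well outside $R_{0,r}^{\wt j r^{2/3}}$ near level $r$), so the same lower bound $\log Z_{\mathcal{L}_0, n}^{\textup{out}, R_{0,r}^{\wt j r^{2/3}}} \ge \log Z_{\mathcal{L}_0, \mathbf u_r^{\max}} + \log Z_{\mathbf u_r^{\max}, n} - O(\log n)$ holds, and the argument proceeds verbatim with $B_j$ supplying the needed geometry.
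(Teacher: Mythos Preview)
Your setup for the first inequality is exactly right and matches the paper: lower bound $\log Z_{\mathcal{L}_0,n}$ by $\log Z_{\mathcal{L}_0,\mathbf p^\dagger}+\log Z_{\mathbf p^\dagger,n}$ with $\mathbf p^\dagger=\mathbf u_r^{\max}$ the $\mathcal F_r$-measurable maximizer of $\log Z_{\mathbf p,n}$, so that the two $\log Z_{\mathbf p^\dagger,n}$ terms cancel against $\log Z_{\mathcal L_r,n}$ (up to $O(\log n)$). But you then take an unnecessary detour. You do not need any dichotomy on the location of $\mathbf p^\dagger$: by translation invariance along the anti-diagonal, for every fixed $\mathbf p\in\mathcal L_r$ the line-to-point free energy $\log Z_{\mathcal L_0,\mathbf p}$ has the \emph{same} law as $\log Z_{\mathcal L_0,r}$. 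Since $\mathbf p^\dagger$ is $\mathcal F_r$-measurable and $\log Z_{\mathcal L_0,\mathbf p^\dagger}$ uses only weights strictly below $\mathcal L_r$, the conditional probability reduces to
\[
\mathbb{P}\bigl(\log Z_{\mathcal L_0,r}-\Lambda_r\le -t r^{1/3}\log(n/r)\bigr)\le \mathbb{P}\bigl(\log Z_{0,r}-\Lambda_r\le -t r^{1/3}\log(n/r)\bigr)\le e^{-\sqrt t}
\]
by Proposition~\ref{low_ub}. Your split on $\{\mathbf p^\dagger\in\mathcal L_r^{Kn^{2/3}}\}$ is not needed, and as written it does not quite work either: applying Proposition~\ref{wide_similar} with $\theta\asymp K(n/r)^{2/3}$ violates the hypothesis $\theta\le 100$, and lower bounding $\log Z_{\mathcal L_0,\mathbf p^\dagger}$ by $\log Z_{0,\mathbf p^\dagger}$ centers the mean at $\Lambda(\mathbf p^\dagger)\neq\Lambda_r$ when $\mathbf p^\dagger$ is far from the diagonal. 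The translation-invariance observation removes all of this.

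For the second inequality there is a genuine gap. You write
\[
\log Z_{\mathcal L_0,n}^{\textup{out},\,R_{0,r}^{\wt j r^{2/3}}}\ \ge\ \log Z_{\mathcal L_0,\mathbf u_r^{\max}}+\log Z_{\mathbf u_r^{\max},n}-O(\log n),
\]
but this is false: even though on $B_j$ the point $\mathbf u_r^{\max}$ lies outside $\mathcal L_r^{\wt j r^{2/3}}$, an arbitrary path from $\mathcal L_0$ to $\mathbf u_r^{\max}$ may well pass through the interior of $R_{0,r}^{\wt j r^{2/3}}$, so its contribution is not counted in $Z_{\mathcal L_0,n}^{\textup{out}}$. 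The paper fixes this by restricting to paths that stay in the shifted parallelogram $R_{\mathbf u_r^{\max}-(r,r),\,\mathbf u_r^{\max}}^{\wt j r^{2/3}}$, which on $B_j$ is disjoint from $R_{0,r}^{\wt j r^{2/3}}$ (since $\mathbf u_r^{\max}\notin\mathcal L_r^{5\wt j r^{2/3}}$ from the $B_j$ inequality). This gives
\[
\log Z_{\mathcal L_0,n}^{\textup{out},\,R_{0,r}^{\wt j r^{2/3}}}\ \ge\ \log Z_{\mathcal L_0,\mathbf u_r^{\max}}^{\textup{in},\,R_{\mathbf u_r^{\max}-(r,r),\mathbf u_r^{\max}}^{\wt j r^{2/3}}}+\log Z_{\mathbf u_r^{\max},n},
\]
and now the first term is distributed (again by translation invariance and independence from $\mathcal F_r$) as $\log Z_{\mathcal L_0,r}^{\textup{in},\wt j r^{2/3}}\ge \log Z_{\mathcal L_0,r}^{\textup{in},r^{2/3}}$, whose left tail is controlled by Proposition~\ref{wide_similar}. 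With this correction your outline becomes the paper's proof.
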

\begin{proof}
For the first estimate, we may replace $\log Z_{\mathcal{L}_r, n}$ by $\max_{\mathbf u \in \mathcal{L}_r} \log Z_{\mathbf u, n}$, since $\log Z_{\mathcal{L}_r, n} \leq \max_{\mathbf u \in \mathcal{L}_r} \log Z_{\mathbf u, n} + 2 \log n$ and $\log n \leq 10 r^{1/3}\log(n/r)$ provided that $r$ and $n$ are large.

Recall that $\mathbf{u}^{\textup{max}}_r$ is the unique maximizer of $\max_{\mathbf u \in \mathcal{L}_r} \log Z_{\mathbf u, n}$ which is $\mathcal{F}_r$-measurable, then it holds that 
\begin{align*}
&\mathbb{P}\Big(\log Z_{\mathcal{L}_0, n} - \max_{\mathbf u \in \mathcal{L}_r} \log Z_{\mathbf u, n} - \Lambda_r \leq -tr^{1/3}\log(n/r)\,\Big|\, \mathcal{F}_r\Big)\\
&\leq \mathbb{P}\Big(\log Z_{\mathcal{L}_0, \mathbf{u}^{\textup{max}}_r} + \log Z_{\mathbf{u}^{\textup{max}}_r, n} - \max_{\mathbf u \in \mathcal{L}_r} \log Z_{\mathbf u, n} - \Lambda_r \leq -tr^{1/3}\log(n/r)\,\Big|\, \mathcal{F}_r\Big)\\
& = \mathbb{P}\Big(\log Z_{\mathcal{L}_0, r} - \Lambda_r \leq -tr^{1/3}\log(n/r)\Big)
 \leq e^{-\sqrt t} \qquad \text{ by Proposition \ref{low_ub}.}
\end{align*}
The second estimate is similar. Note that on the event $B_j$, $\mathbf{u}^{\textup{max}}_r$ contained inside $\mathcal{L}_r \setminus \mathcal{L}_r^{10\wt jr^{2/3}}$, then 
\begin{align*}
&\mathbbm{1}_{B_j}\mathbb{P}\Big(\log Z_{\mathcal{L}_0, n}^{\textup{out}, R_{0,r}^{\wt j r^{2/3}}} - \max_{\mathbf u \in \mathcal{L}_r} \log Z_{\mathbf u, n} - \Lambda_r \leq -tr^{1/3}\log(n/r)\,\Big|\, \mathcal{F}_r\Big)\\
&\leq \mathbbm{1}_{B_j} \mathbb{P}\Big(\log Z_{\mathcal{L}_0, \mathbf{u}^{\textup{max}}_r}^{\textup{in}, R_{\mathbf{u}^{\textup{max}}_r - (r,r),\mathbf{u}^{\textup{max}}_r}^{\wt j r^{2/3}}} +  \log Z_{\mathbf{u}^{\textup{max}}_r, n}  -  \max_{\mathbf u \in \mathcal{L}_r} \log Z_{\mathbf u, n} - \Lambda_r \leq -tr^{1/3}\log(n/r)\,\Big|\, \mathcal{F}_r\Big)\\
& \leq \mathbb{P}\Big(\log Z_{\mathcal{L}_0, r}^{\textup{in}, {\wt j r^{2/3}}} - \Lambda_r \leq -tr^{1/3}\log(n/r)\Big)\\
& \leq \mathbb{P}\Big(\log Z_{\mathcal{L}_0, r}^{\textup{in}, {r^{2/3}}} - \Lambda_r \leq -tr^{1/3}\log(n/r)\Big) \leq e^{-\sqrt t} \qquad \textup{ by Proposition \ref{wide_similar}.}
\end{align*}
This finishes the proof of the proposition.
\end{proof}

Finally, we address Proposition \ref{prop_A} and Proposition \ref{prop_AA}, starting with Proposition \ref{prop_AA}.

\begin{proof}[Proof of Proposition \ref{prop_AA}]
Let $G_t$ be defined as in \eqref{def_A}. 
Utilizing Proposition \ref{replace} and Proposition \ref{replace1}, we obtain that for $t\geq t_0$ and $s\geq t$, it holds that 
\begin{equation}\label{replace2} \mathbbm{1}_{G_t}\mathbb{P}\Big(\Big|\log Z_{\mathcal{L}_0, n} - \log Z_{\mathcal{L}_r, n} -  \Lambda_r\Big|  \geq sr^{1/3}\log(n/r)  \, \Big| \, \mathcal{F}_r \Big) \leq e^{-\sqrt s}.
\end{equation}
Now with this, we will give an upper bound for the following  conditional expectation 
\begin{align*}
& \mathbbm{1}_{G_t}\mathbb{E}\Big[\Big|\log Z_{\mathcal{L}_0, n} - \log Z_{\mathcal{L}_r, n} -  \Lambda_r \Big|\,\Big| \,\mathcal{F}_r \Big]\\
& =  \mathbbm{1}_{G_t}\int_{[0,\infty)} \mathbb{P}\Big(\Big|\log Z_{\mathcal{L}_0, n} - \log Z_{\mathcal{L}_r, n} -  \Lambda_r\Big|  \geq x  \, \Big| \, \mathcal{F}_r \Big) dx \\
&  \leq r^{1/3} \log(n/r)\int_{[0,\infty)} \mathbbm{1}_{G_t}\mathbb{P}\Big(\Big|\log Z_{\mathcal{L}_0, n} - \log Z_{\mathcal{L}_r, n} -  \Lambda_r\Big|  \geq sr^{1/3}\log(n/r)  \, \Big| \, \mathcal{F}_r \Big) ds\\
& \leq  r^{1/3} \log(n/r)\Big[t + \int_{[t,\infty)} \mathbbm{1}_{G_t}  \mathbb{P}\Big(\Big|\log Z_{\mathcal{L}_0, n} - \log Z_{\mathcal{L}_r, n} -  \Lambda_r\Big|  \geq sr^{1/3}\log(n/r)  \, \Big| \, \mathcal{F}_r \Big) ds \\
& \leq  r^{1/3} \log(n/r)\Big[t + \int_{[t,\infty)} e^{-\sqrt s} ds\Big] \qquad  \text{ by \eqref{replace2}}\\
& \leq 2tr^{1/3}\log (n/r).
\end{align*}
Then using this, the following estimate would conclude our proof of the proposition 
\begin{align}
&\mathbbm{1}_{G_t}\mathbb{P}\Big(\Big|\log Z_{\mathcal{L}_0, n} - \mathbb{E}\Big[\log Z_{\mathcal{L}_0, n} \,\Big| \,\mathcal{F}_r \Big]\Big| \geq 4tr^{1/3}\log(n/r) \, \Big| \, \mathcal{F}_r \Big)\nonumber\\
& \leq \mathbbm{1}_{G_t}\mathbb{P}\Big(\Big|\log Z_{\mathcal{L}_0, n} - \log Z_{\mathcal{L}_r, n} -  \Lambda_r\Big| \nonumber\\
& \qquad \qquad \qquad \qquad \qquad + \Big| \mathbb{E}\Big[\log Z_{\mathcal{L}_0, n} - \log Z_{\mathcal{L}_r, n} -  \Lambda_r \,\Big| \,\mathcal{F}_r \Big]\Big| \geq 4tr^{1/3}\log(n/r)\, \Big| \, \mathcal{F}_r \Big)\nonumber\\
& \leq \mathbbm{1}_{G_t}\mathbb{P}\Big(\Big|\log Z_{\mathcal{L}_0, n} - \log Z_{\mathcal{L}_r, n} -  \Lambda_r\Big| + 2tr^{1/3}\log(n/r) \geq 4tr^{1/3}\log(n/r) \, \Big| \, \mathcal{F}_r \Big) \nonumber\\
& = \mathbbm{1}_{G_t}\mathbb{P}\Big(\Big|\log Z_{\mathcal{L}_0, n} - \log Z_{\mathcal{L}_r, n} -  \Lambda_r\Big| \geq 2t r^{1/3}\log(n/r) \, \Big| \, \mathcal{F}_r \Big) \nonumber\\
& \leq e^{\sqrt t}\qquad \text{ by \eqref{replace2}}. \nonumber
\end{align}
This finishes the proof of the proposition.
\end{proof}
In the argument above, the only input required is \eqref{replace2}. From Proposition \ref{replace} and Proposition \ref{replace1}, we can also derive that
\begin{equation}\mathbbm{1}_{G_t\cap B_j}\mathbb{P}\label{replace4}
\Big(\Big|\log Z_{\mathcal{L}_0, n}^{\textup{out}, R_{0,r}^{\wt j r^{2/3}}} - \log Z_{\mathcal{L}_r, n} - \Lambda_r\Big| \geq sr^{1/3}\log(n/r) \, \Big| \, \mathcal{F}_r \Big) \leq e^{-\sqrt{s}}.
\end{equation}
By replacing the estimate \eqref{replace2} in the proof of Proposition \ref{replace1} with \eqref{replace4}, the same calculation leads directly to Proposition \ref{replace}. Therefore, we omit this repeated argument.

\bibliographystyle{amsplain}
\bibliography{time}

\providecommand{\bysame}{\leavevmode\hbox to3em{\hrulefill}\thinspace}
\providecommand{\MR}{\relax\ifhmode\unskip\space\fi MR }
\providecommand{\MRhref}[2]{%
  \href{http://www.ams.org/mathscinet-getitem?mr=#1}{#2}
}
\providecommand{\href}[2]{#2}
\begin{thebibliography}{10}

\bibitem{ag_airy}
Amol Aggarwal and Jiaoyang Huang, \emph{Strong characterization for the airy line ensemble}, 2023, {\tt arXiv:2308.11908}.

\bibitem{Bai-Dei-Joh-99}
Jinho Baik, Percy Deift, and Kurt Johansson, \emph{On the distribution of the length of the longest increasing subsequence of random permutations}, J. Amer. Math. Soc. \textbf{12} (1999), no.~4, 1119--1178. \MR{1682248 (2000e:05006)}

\bibitem{balzs2019nonexistence}
M\'{a}rton Bal\'{a}zs, Ofer Busani, and Timo Sepp{{\"a}}l{{\"a}}inen, \emph{Non-existence of bi-infinite geodesics in the exponential corner growth model}, Forum Math. Sigma \textbf{8} (2020), Paper No. e46, 34. \MR{4176750}

\bibitem{timecorriid}
Riddhipratim Basu and Shirshendu Ganguly, \emph{Time correlation exponents in last passage percolation}, In and out of equilibrium 3. {C}elebrating {V}ladas {S}idoravicius, Progr. Probab., vol.~77, Birkh\"{a}user/Springer, Cham, [2021] \copyright 2021, pp.~101--123. \MR{4237265}

\bibitem{timecorrflat}
Riddhipratim Basu, Shirshendu Ganguly, and Lingfu Zhang, \emph{Temporal correlation in last passage percolation with flat initial condition via {B}rownian comparison}, Comm. Math. Phys. \textbf{383} (2021), no.~3, 1805--1888. \MR{4244262}

\bibitem{bas-sep-she-24}
Riddhipratim Basu, Timo Sepp\"{a}l\"{a}inen, and Xiao Shen, \emph{Temporal {C}orrelation in the {I}nverse-{G}amma {P}olymer}, Comm. Math. Phys. \textbf{405} (2024), no.~7, Paper No. 163. \MR{4768533}

\bibitem{diff_timecorr}
Riddhipratim Basu and Xiao Shen, \emph{Time correlations in {KPZ} models with diffusive initial conditions}, 2024, {\tt arXiv:2308.03473}.

\bibitem{slowbondproblem}
Riddhipratim Basu, Vladas Sidoravicius, and Allan Sly, \emph{Last passage percolation with a defect line and the solution of the slow bond problem}, 2014, {\tt arXiv:1408.3464}.

\bibitem{Cat-Gro-06}
Eric Cator and Piet Groeneboom, \emph{Second class particles and cube root asymptotics for {H}ammersley's process}, Ann. Probab. \textbf{34} (2006), no.~4, 1273--1295. \MR{2257647 (2008e:60021)}

\bibitem{KPZcorr}
Ivan Corwin, Promit Ghosal, and Alan Hammond, \emph{K{PZ} equation correlations in time}, Ann. Probab. \textbf{49} (2021), no.~2, 832--876. \MR{4255132}

\bibitem{cgm_low_up}
Elnur Emrah, Nicos Georgiou, and Janosch Ortmann, \emph{Coupling derivation of optimal-order central moment bounds in exponential last-passage percolation}, 2022, {\tt arXiv:2204.06613}.

\bibitem{rtail1}
Elnur Emrah, Chris Janjigian, and Timo Sepp\"{a}l\"{a}inen, \emph{Right-tail moderate deviations in the exponential last-passage percolation}, 2020, {\tt arXiv:2004.04285}.

\bibitem{opt_exit}
Elnur Emrah, Christopher Janjigian, and Timo Sepp\"{a}l\"{a}inen, \emph{Optimal-order exit point bounds in exponential last-passage percolation via the coupling technique}, Probab. Math. Phys. \textbf{4} (2023), no.~3, 609--666. \MR{4620410}

\bibitem{ferocctimecorr}
P.~L. Ferrari and A.~Occelli, \emph{Time-time covariance for last passage percolation with generic initial profile}, Math. Phys. Anal. Geom. \textbf{22} (2019), no.~1, Paper No. 1, 33. \MR{3895778}

\bibitem{fer-occ-2022}
Patrik~L. Ferrari and Alessandra Occelli, \emph{Time-time covariance for last passage percolation in half-space}, 2022, {\tt arXiv:2204.06782}.

\bibitem{Ferr-Spoh-2016}
Patrik~L. Ferrari and Herbert Spohn, \emph{On time correlations for {KPZ} growth in one dimension}, SIGMA Symmetry Integrability Geom. Methods Appl. \textbf{12} (2016), Paper No. 074, 23. \MR{3529743}

\bibitem{Kar-Par-Zha-86}
Mehran Kardar, Giorgio Parisi, and Yi-Cheng Zhang, \emph{Dynamic scaling of growing interfaces}, Phys. Rev. Lett. \textbf{56} (1986), 889--892.

\bibitem{Lan-Sos-22-a-}
Benjamin Landon and Philippe Sosoe, \emph{Upper tail bounds for stationary {KPZ} models}, Comm. Math. Phys. \textbf{401} (2023), no.~2, 1311--1335. \MR{4610276}

\bibitem{OC_tail}
\bysame, \emph{Tail bounds for the {O}'{C}onnell-{Y}or polymer}, Electron. J. Probab. \textbf{29} (2024), Paper No. 1. \MR{4771973}

\bibitem{coalnew}
Timo Sepp{{\"a}}l{{\"a}}inen, \emph{Existence, uniqueness and coalescence of directed planar geodesics: proof via the increment-stationary growth process}, Ann. Inst. Henri Poincar\'{e} Probab. Stat. \textbf{56} (2020), no.~3, 1775--1791. \MR{4116707}

\bibitem{seppcoal}
Timo Sepp\"{a}l\"{a}inen and Xiao Shen, \emph{Coalescence estimates for the corner growth model with exponential weights}, Electron. J. Probab. \textbf{25} (2020), 31 pp., Corrected version available at {\tt arXiv:1911.03792}.

\bibitem{buse_ind}
Xiao Shen, \emph{Independence property of the busemann function in exactly solvable {KPZ} models}, 2023, {\tt arXiv:2308.11347}.

\bibitem{Take-2012}
Kazumasa~A Takeuchi, \emph{Statistics of circular interface fluctuations in an off-lattice {E}den model}, Journal of Statistical Mechanics: Theory and Experiment \textbf{2012} (2012), no.~05, P05007.

\bibitem{Take-Sano-2012}
Kazumasa~A. Takeuchi and Masaki Sano, \emph{Evidence for geometry-dependent universal fluctuations of the {K}ardar-{P}arisi-{Z}hang interfaces in liquid-crystal turbulence}, Journal of Statistical Physics \textbf{147} (2012), no.~5, 853--890.

\end{thebibliography}

\end{document}